

\documentclass[11pt,a4paper]{amsart}

\usepackage[pdftex, 
	a4paper,
	unicode,
	linktocpage,
	urlbordercolor={1 0.5 0.125},
	filebordercolor={1 0.5 0.125},
	linkbordercolor={0.25 1 0.25},
	citebordercolor={0.25 1 0.25},
	pdfusetitle,
	pdftitle={Finitely generated infinite simple groups
	of infinite commutator width and
	vanishing stable commutator length},
	pdfauthor={Alexey Muranov},
	pdfproducer={pdfLaTeX},
	pdflang=EN, 
	pagebackref=false,
	pdfstartview=FitH,
	pdfborderstyle={/S/U/W 1}, 
	breaklinks]{hyperref}

\usepackage[T1]{fontenc}
\usepackage{lmodern}

\usepackage[utf8]{inputenc}

\usepackage{graphicx}
\usepackage{amssymb}




\newtheorem*{theoremA}{Main Theorem}
\newtheorem{theorem}{Theorem}[section]
\newtheorem{proposition}[theorem]{Proposition}
\newtheorem{lemma}[theorem]{Lemma}
\newtheorem{corollary.lemma}[theorem]{Corollary}
\newtheorem{corollary.theorem}[theorem]{Corollary}

\theoremstyle{definition}
\newtheorem*{definition}{Definition}

\theoremstyle{remark}
\newtheorem{remark}[theorem]{Remark}

\newcommand{\ie}{i\textup.e\textup.{}}

\newcommand{\cl}{\operatorname{cl}}
\newcommand{\st}[1]{\overline{#1}}
\newcommand{\stcl}{\st\cl}
\newcommand{\cw}{\operatorname{cw}}
\newcommand{\sql}{\operatorname{sql}}
\newcommand{\sqw}{\operatorname{sqw}}

\providecommand{\abs}[1]{\lvert#1\rvert}
\providecommand{\norm}[1]{\lVert#1\rVert}

\newcommand{\GP}[2]{\langle\,#1\,\Vert\,#2\,\rangle}
\newcommand{\tGP}[2]{$\GP{#1}{#2}$}

\newcommand{\cntr}{\partial}


\begin{document}




\title[Simple groups of infinite square width]
	{Finitely generated infinite simple groups\\
	of infinite square width and vanishing\\
	stable commutator length}

\author{Alexey Muranov}

\date{\today}

\address{Institut de Math\'ematiques de Toulouse\\
	Universit\'e de Toulouse\\
	118 route de Narbonne\\
	F--31062 Toulouse Cedex 9\\
	France}

\email{\href{mailto:muranov@math.univ-toulouse.fr}
	{\nolinkurl{muranov@math.univ-toulouse.fr}}}

\subjclass[2010]{Primary 20F06; Secondary 20E32, 20J06}

\keywords{Small cancellations,
	simple group, stable commutator length, square length,
	homogeneous quasi-morphism,
	bounded cohomology, van Kampen diagram.}




\begin{abstract}
It is shown that there exist finitely generated infinite
simple groups of infinite commutator width and infinite square width
on which there exists no stably unbounded conjugation-invariant norm,
and in particular stable commutator length vanishes.
Moreover, a recursive presentation of such a group
with decidable word and conjugacy problems
is constructed.
\end{abstract}


\maketitle
\tableofcontents




\section{Introduction}
\label{section:introduction}

This article is an extension of \cite{Muranov:2007:fgisgicw},
where finitely generated infinite simple
groups of infinite \emph{commutator width\/} were constructed using
a small-cancellation approach.
It is explained here how to modify that construction in order to
make the \emph{square width\/} of the constructed group $G$
also infinite and to satisfy
the additional condition that there exist no
\emph{stably unbounded conjugation-invariant norm\/} on $G$.
This additional condition implies in particular
vanishing of \emph{stable commutator length},
which in turn is equivalent to injectivity of
the natural homomorphism $H_b^2(G,\mathbb R)\to H^2(G,\mathbb R)$
from the second bounded cohomology to
the second ordinary cohomology
(see
Proposition 3.4 in \cite{Bavard:1991:lsc-fr} or
Theorem 3.18 in \cite{Grigorchuk:1995:srbc}).
Also an effort is made to explain the technics of
\cite{Muranov:2007:fgisgicw} more informally.

\begin{definition}
Let $G$ be a group.
The \emph{commutator\/} of elements $x$ and $y$ of $G$ is
$[x,y]=xyx^{-1}y^{-1}$.
The \emph{commutator length\/} of an element $g$ of the derived
subgroup $[G,G]$, denoted $\cl_G(g)$,
is the minimal $n$ such that there exist
$x_1,\dotsc,x_n,y_1,\dotsc,y_n\in G$ such that
$g=[x_1,y_1]\dotsm[x_n,y_n]$.
(Naturally, $\cl(1)=0$.)
The \emph{stable commutator length\/} of $g\in[G,G]$
shall be denoted by $\stcl_G(g)$ and is defined by
$$
\stcl_G(g)=\lim_{n\to\infty}\frac{\cl_G(g^n)}{n}
=\inf_{n\in\mathbb N}\frac{\cl_G(g^n)}{n}.
$$
The \emph{commutator width\/} of $G$ shall be denoted $\cw(G)$
and is defined by
$$
\cw(G)=\sup_{[G,G]}\cl_G.
$$
\end{definition}

Stable commutator length is related to the space of
\emph{homogeneous quasi-morphisms\/} on $G$, and hence
to the kernel of the natural homomorphism
$H_b^2(G,\mathbb R)\to H^2(G,\mathbb R)$,
which is isomorphic to the quotient
of the space of all homogeneous quasi-morphisms $G\to\mathbb R$
by the subspace of all homomorphisms $G\to\mathbb R$,
see Proposition 3.3.1(1) in \cite{Bavard:1991:lsc-fr} or
Theorem 3.5 in \cite{Grigorchuk:1995:srbc}.

\begin{definition}
Let $G$ be a group.
A function $\phi\colon G\to\mathbb R$
is called a \emph{quasi-morphism\/} if
the function $(x,y)\mapsto\phi(xy)-\bigl(\phi(x)+\phi(y)\bigr)$
is bounded on $G\times G$.
A quasi-morphism is \emph{homogeneous\/} if
its restriction to every cyclic subgroup is a homomorphism
to $(\mathbb R,+)$.
\end{definition}

Every quasi-morphism $G\to\mathbb R$
is bounded on the set of all commutators of $G$.
It is also relatively easy to see that if
some homogeneous quasi-morphisms
is non-zero on $g\in[G,G]$, then $\stcl_G(g)>0$,
and in particular $\cw G=\infty$.
Christophe Bavard \cite[Proposition 3.4]{Bavard:1991:lsc-fr}
proved that in fact
stable commutator length vanishes on the whole of the derived subgroup
if and only if so do all homogeneous quasi-morphisms.
(Moreover, Bavard provided a formula for commutator length
in terms of homogeneous quasi-morphisms.)
Furthermore, observe
(or see Proposition 3.3.1(2) in \cite{Bavard:1991:lsc-fr})
that a homogeneous quasi-morphism vanishes on
the derived subgroup only if it is a homomorphism;
therefore, the natural homomorphism
$H_b^2(G,\mathbb R)\to H^2(G,\mathbb R)$
is injective if and only if stable commutator length vanishes
on~$[G,G]$.

A comprehensive introduction to the theory of stable commutator length
may be found in~\cite{Calegari:2009:scl}.

Until 1991 it was apparently not known that there exist
simple groups of commutator width greater than~$1$.
For finite simple groups, it was shown in 2008
by Martin W. Liebeck, Eamonn A.\ O’Brien, Aner Shalev,
and Pham Huu Tiep \cite{LiebeckOST:2010:Oc} that every element
of every non-abelian finite simple group is a commutator,
and thus the long-standing conjecture of
Oystein Ore \cite{Ore:1951:src} was proved.%
\footnote{A stronger conjecture of
John Thompson that every non-abelian finite simple group $G$
has a conjugacy class $C$ such that $G=CC$ still remains open.
Previously, important results towards resolution of the two conjectures
were obtained by Erich Ellers and Nikolai Gordeev in
\cite{EllersGordeev:1998:cTO} and by
A.\ Shalev in~\cite{Shalev:2009:wmccncWtt}.}
Jean Barge and Étienne Ghys
\cite[Theorem 4.3]{BargeGhys:1992:cEM-fr}
showed that there are simple groups of symplectic diffeomorphisms of
$\mathbb R^{2n}$
(kernels of \emph{Calabi homomorphisms\/})
which possess nontrivial homogeneous quasi-morphisms, and thus
their commutator width is infinite.
Theorem 1.1 in \cite{GambaudoGhys:2004:cds} provides other
similar examples of simple groups of infinite commutator width.
Existence of \emph{finitely generated\/} simple groups of commutator width
greater than $1$ was proved in \cite{Muranov:2007:fgisgicw}.
Pierre-Emmanuel Caprace and Koji Fujiwara
\cite{CapraceFujiwara:2010:r1ibqmKMg}
recently proved that
there are \emph{finitely presented\/}
simple groups for which the space of homogeneous quasi-morphisms
is infinite-dimensional,
and in particular whose commutator width is infinite.
Those groups are the quotients of certain non-affine Kac–Moody lattices
by the center;
they were defined by Jaques Tits \cite{Tits:1987:upKMgf}
and their simplicity was proved by P.-E. Caprace
and Bertrand Rémy \cite{CapraceRemy:2006:sagKM-fr}.

Commutator length in a group $G$
is an example of a conjugation-invariant norm
on the derived subgroup $[G,G]$,
as defined by
Dmitri Burago, Sergei Ivanov, and Leonid Polterovich
in~\cite{BuragoIvaPol:2008:cinggo}.

\begin{definition}
A \emph{conjugation-invariant norm\/} on a group $G$ is a function
$\nu\colon G\to[0,\infty)$ which satisfies the following
five axioms:
\begin{enumerate}
\item
\label{item:definition.conjugation-invariant_norm.1}
	$\nu(g)=\nu(g^{-1})$ for all $g\in G$,
\item
\label{item:definition.conjugation-invariant_norm.2}
	$\nu(gh)\le\nu(g)+\nu(h)$ for all $g,h\in G$,
\item
\label{item:definition.conjugation-invariant_norm.3}
	$\nu(g)=\nu(hgh^{-1})$ for all $g,h\in G$,
\item
\label{item:definition.conjugation-invariant_norm.4}
	$\nu(1)=0$,
\item
\label{item:definition.conjugation-invariant_norm.5}
	$\nu(g)>0$ for all $g\in G\setminus\{1\}$.
\end{enumerate}
For brevity, conjugation-invariant norms shall be sometimes called
simply \emph{norms}.
\end{definition}

\begin{definition}
If $\nu$ is a norm on $G$, then
its \emph{stabilization}, which shall be denoted $\st\nu$, is defined by
$$
\st\nu(g)=\lim_{n\to\infty}\frac{\nu(g^n)}{n}
=\inf_{n\in\mathbb N}\frac{\nu(g^n)}{n},\quad g\in G.
$$
A norm $\nu$ is \emph{stably unbounded\/}
if $\st\nu(g)>0$ for some~$g\in G$.
\end{definition}

Note that in general the stabilization of a norm is not a norm, as it
has no reason to satisfy the axioms
(\ref{item:definition.conjugation-invariant_norm.2})
and (\ref{item:definition.conjugation-invariant_norm.5})
of the definition.

The following question was raised by Burago, Ivanov, and Polterovich
\cite{BuragoIvaPol:2008:cinggo}:
\begin{quote}
	Does there exist a group that does not admit a stably unbounded norm
	and yet admits a norm unbounded on some cyclic subgroup?
\end{quote}
The main theorem of this article answers this question positively.

Similarly to commutator length, one can define
\emph{square length}.

\begin{definition}
Let $G$ be a group.
The \emph{square length\/} of an element $g$ of the
subgroup $G^2=\langle\,x^2\,\mid\,x\in G\,\rangle$, denoted $\sql_G(g)$,
is the minimal $n$ such that there exist
$x_1,\dotsc,x_n\in G$ such that
$g=x_1^2\dotsm x_n^2$.
The \emph{square width\/} of $G$ is
$$
\sqw(G)=\sup_{G^2}\sql_G.
$$
\end{definition}

Observe that finite commutator width implies finite square width,
since every commutator is the product of $3$ squares (and since abelian
groups have square width at most $1$).
Moreover,
$\sql_G(x)\le2\cl_G(x)+1$ for every $x\in[G,G]$,
and this estimate cannot be improved
unless $\cl_G(x)=0$ ($x=1$),
see \cite[Section~2.5]{Culler:1981:ussefg}.

\begin{theoremA}
\label{theorem:A.(1)}
There exists a torsion-free simple group\/ $G$ generated by\/ $2$ elements\/
$a$ and\/ $b$ such that\/\textup:
\begin{enumerate}
\item
	$a^2$ and\/ $b^2$ freely generate a free subgroup\/ $H$ such that
$$
	\lim_{n\to\infty}\cl_G(h^n)=\infty
	\quad\text{for every\/ $h\in H\setminus\{1\}$}
$$
	\textup(in particular\textup, $\cw(G)=\infty$\textup)\textup,
\item
	$\sql_G$ is unbounded on\/ $H=\langle a^2,b^2\rangle$\/
	\textup(in particular\textup, $\sqw(G)=\infty$\textup)\textup,
\item
	$G$ does not admit any stably unbounded
	conjugation-invariant norm\/
	\textup(in particular\textup, $\stcl_G=0$\textup)\textup,
\item
	$G$ is the direct limit of a sequence of hyperbolic groups with respect
	to a family of surjective homomorphisms\textup,
\item
	the cohomological and geometric dimensions of\/ $G$
	are\/~$2$\textup,
\item
	$G$ has decidable word and conjugacy problems\textup.
\end{enumerate}
\end{theoremA}

This theorem is stronger than Theorem 5 in \cite{Muranov:2007:fgisgicw}.
It provides positive answer to the question of
Burago-Ivanov-Polterovich
and also shows that stable commutator length can vanish
on a simple group of infinite commutator width
(and even of infinite square width).



\section{The presentation}
\label{section:presentation_construction}

The construction presented here is similar to the one
in \cite[Section~2]{Muranov:2007:fgisgicw}.

In what follows, if $w$ denotes a group word,
then the group element represented by $w$
shall be denoted
by $[w]$, or $[w]_G$ if $G$ is the group in question.

Let $\{a,b\}$ be a $2$-letter alphabet
(one could start here with any finite alphabet containing
at least $2$ letters).
The set of all group words over $\{a,b\}$ shall be denoted by
$\{a^{\pm1},b^{\pm1}\}^*$ or $(\{a,b\}^{\pm1})^*$.
Let $F$ be the free group formally freely generated by $a$ and $b$,
that is the group presented by \tGP{a,b}{\varnothing}.
To prove the main theorem, a group $G$ with desired properties shall be
constructed as a quotient of $F$ by recursively constructing
its presentation \tGP{a,b}{\mathcal R}.

First of all,
fix an arbitrary nontrivial reduced group word $v$ over $\{a,b\}$
(or any group word which is not freely trivial),
and let $C_1, C_2, \dotsc$
be a list of conditions to be satisfied by $G$
composed as follows:
\begin{enumerate}
\item
	for every $w\in\{a^{\pm1},b^{\pm1}\}^*$ and $x\in\{a,b\}$,
	the list contains the condition
	\begin{quote}
		``if $[\underline{w}]\ne1$,
		then $[\underline{x}]$ is the product of conjugates
		of $[\underline{w}]$,''
	\end{quote}
	this condition shall be called a \emph{condition of the first kind},
\item
	for every $w\in\{a^{\pm1},b^{\pm1}\}^*$ and every $n\in\mathbb N$,
	the list contains the condition
	\begin{quote}
		``there exist $p,q\in\mathbb N$ such that
		$p\ge\underline{n}q$ and
		$[\underline{w}]^{p}$ is the product of $q$ conjugates
		of $[\underline{v}]$,''
	\end{quote}
	this condition shall be called a \emph{condition of the second kind},
\item
	the list contains no other conditions.
\end{enumerate}
Observe that all conditions in this list are preserved
under passing to quotients.

Let $z_1=aa=a^2$ and $z_2=bb=b^2$.
The elements $[z_1]_G,[z_2]_G$ are intended to freely generate
a free subgroup $H$ of $G$ such as in the statement of the main theorem
(there are many other possible choices for $z_1,z_2$).
The following properties of $z_1,z_2$
(some necessary, other just convenient)
shall be noted:
\begin{enumerate}
\item
	$[z_1],[z_2]$ freely generate a free subgroup of~$F$,
\item
	$z_1$ and $z_2$ are positive in the sense that they do not
	contain $a^{-1}$ or $b^{-1}$, and hence any concatenation
	of copies of $z_1,z_2$ is cyclically reduced,
\item
	every element of the free subgroup $\langle[z_1],[z_2]\rangle$
	of $F$ can be written as a reduced concatenation of copies of
	$z_1^{\pm1},z_2^{\pm1}$, and is conjugate to an element
	that can be written as a cyclically reduced
	concatenation of copies of
	$z_1^{\pm1},z_2^{\pm1}$,
\item
\label{item:z_1_z_2.properties.4}
	for every $\varepsilon>0$ and every $L>0$, there exists
	a non-periodic
	(and hence not representing a proper power in $F$)
	concatenation $w$ of copies of
	$z_1,z_2$ of length at least $L$ such that
	if $sq_1$ and $sq_2$ are two distinct cyclic shifts of $w$,
	then $\abs{s}\le\varepsilon\abs{w}$.
\end{enumerate}
To verify the last property, one can consider, for example,
$$
w=\prod_{i=0}^{n}a^{2i}b^{2n-2i}
$$
with $n$ sufficiently large.

The desired indexed family of defining relators
$\mathcal R=\{r_n\}_{n\in I}$
shall be constructed in two steps.
First, a family $\mathcal Q=\{r_n\}_{n\in\mathbb N}$
shall be constructed so that for all $n$, the condition $C_n$
be a consequence of the relation $\ulcorner r_n=1\urcorner$, and hence
the group presented by \tGP{a,b}{\mathcal Q} shall satisfy all of
the conditions $C_1, C_2, \dotsc$;
however, this group will be the trivial group.
Second, certain redundant relators $r_n$ corresponding to
conditions $C_n$ of the first kind
shall be removed from $\mathcal Q$ to obtain
$\mathcal R=\{r_n\}_{n\in I}$, $I\subset\mathbb N$.

The construction of $\mathcal Q$
can easily be carried out effectively, so that $\mathcal Q$ be recursive.
In the process of establishing decidability of the word and conjugacy
problems in $G=\GP{a,b}{\mathcal R}$,
it shall be shown that $\mathcal R$ can be made recursive as well
(all depends on a good choice of~$\mathcal Q$).

The defining relators
shall be chosen to satisfy certain small-cancel\-lation
conditions to ensure that the obtained group $G$ be nontrivial,
of infinite commutator and square widths,
with commutator length unbounded on
nontrivial cyclic subgroups of $H=\langle[z_1]_G,[z_2]_G\rangle$, etc.
Additionally, these small-cancel\-lation
conditions will cause all finite subpresentations of \tGP{a,b}{\mathcal R}
to define hyperbolic groups.

The idea of the proof that commutator and square lengths are
unbounded is to find for every $N<0$ some group word
$w$ such that
the Euler characteristic of every
van Kampen diagram over \tGP{a,b}{\mathcal R}
with the boundary label $w$ be necessarily less than $N$.
The defining relations shall be chosen so that
``most'' sufficiently long reduced concatenation of copies of
$z_1^{\pm1}$ and $z_2^{\pm1}$ suit for the role of $w$ here.

Here follows the formal construction of $\mathcal Q$ and~$\mathcal R$.

Let $A$ be the set of all $n$ such that the condition $C_n$ is of
the first kind.
For every $n\in A$,
let $w_n\in\{a^{\pm1},b^{\pm1}\}^*$ and $x_n\in\{a,b\}$ be such that
$C_n$ states:
\begin{quote}
	``if $[\underline{w_n}]\ne1$, then $[\underline{x_n}]$ is the product
	of conjugates of $[\underline{w_n}]$,''
\end{quote}
the relator $r_n$ then shall be chosen in the form
$$
r_n=u_{n,1}w_nu_{n,1}^{-1}\dots u_{n,k_n}w_nu_{n,k_n}^{-1}x_n^{-1}
=w_n^{u_{n,1}}\dots w_n^{u_{n,k_n}}x_n^{-1},
$$
where $u_{n,1},\dotsc,u_{n,k_n}\in\{a^{\pm1},b^{\pm1}\}^*$ and $k_n\ge3$;
such $r_n$ shall be called a \emph{relator of the first kind}.
For every $n\in\mathbb N\setminus A$,
let $w_n\in\{a^{\pm1},b^{\pm1}\}^*$ and $m_n\in\mathbb N$ be such that
$C_n$ states:
\begin{quote}
	``there exist $p,q\in\mathbb N$ such that
	$p\ge\underline{m_n}q$ and
	$[\underline{w_n}]^{p}$ is the product of $q$ conjugates
	of $[\underline{v}]$,''
\end{quote}
the relator $r_n$ then shall be chosen in the form
$$
r_n=u_{n,1}vu_{n,1}^{-1}\dots u_{n,k_n}vu_{n,k_n}^{-1}(w_n^{m_nk_n})^{-1}
=v^{u_{n,1}}\dots v^{u_{n,k_n}}w_n^{-m_nk_n},
$$
where $u_{n,1},\dotsc,u_{n,k_n}\in\{a^{\pm1},b^{\pm1}\}^*$ and $k_n\ge3$;
such $r_n$ shall be called a \emph{relator of the second kind}.
Observe that the group presented by \tGP{a,b}{\mathcal Q} will be trivial
because some of the defining relations of the first kind
will make both generators $[a]$ and $[b]$ equal to products
of conjugates of~$1$.

The family $\mathcal R=\{r_n\}_{n\in I}$ shall be obtained
from $\mathcal Q=\{r_n\}_{n\in\mathbb N}$
by discarding certain redundant relators of the first kind.
Namely, let the set of indices $I\subset\mathbb N$ be defined
inductively as follows:
\begin{quote}
	for every $n\in\mathbb N$, $n\notin I$ if and only if $n\in A$
	(\ie\ $r_n$ is of the first kind)
	and $[w_n]=1$ in the group presented by \tGP{a,b}{r_i,\ i\in I, i<n}.
\end{quote}
Observe that $I$ is infinite
(because there are infinitely many relators of the second kind
in~$\mathcal Q$).

To finalize the construction of $\mathcal Q$ and $\mathcal R$,
it is left
to specify how to choose the sequence $\{k_n\}_{n\in\mathbb N}$
and the indexed family $\{u_{n,i}\}_{n\in\mathbb N;i=1,\dotsc,k_n}$.

Roughly speaking, the main requirements shall be that
the integers $k_n$ tend to infinity,
that the words $u_{n,i}$ be reduced, ``very long,'' and have
``very short'' common subwords, and
also it is important that for every $n$,
the words $u_{n,1},\dotsc,u_{n,k_n}$
be ``more or less of the same length''
(for further convenience they may be chosen of the same length:
$\abs{u_{n,1}}=\dotsb=\abs{u_{n,k_n}}$).

The sequence $\{k_n\}_{n\in\mathbb N}$
and the family $\{u_{n,i}\}_{n\in\mathbb N;i=1,\dotsc,k_n}$ shall be
chosen simultaneously with four other sequences:
a sequence of integers $\{\chi_n\}_{n\in\mathbb N}$
tending to $-\infty$
(to be used as upper bounds on Euler characteristics)
and three sequences of ``small'' positive reals
$\{\lambda_n\}_{n\in\mathbb N}$,
$\{\mu_n\}_{n\in\mathbb N}$, and
$\{\nu_n\}_{n\in\mathbb N}$.
Also it will be convenient to use two auxiliary sequences
$\{\kappa_n\}_{n\in\mathbb N}$ and $\{\gamma_n\}_{n\in\mathbb N}$
defined by
$$
\kappa_n=2k_n\quad\text{and}\quad
\gamma_n=\lambda_n+(3+2\kappa_n)\mu_n+2\nu_n
$$
for all $n\in\mathbb N$.
The following 12 conditions shall be satisfied:
\begin{enumerate}
\renewcommand{\labelenumi}{(C\theenumi)}
\item
\label{item:presentation_construction.main_properties.1}
	for every $n\in\mathbb N$, $k_n\ge3$,
\item
\label{item:presentation_construction.main_properties.2}
	for every $n\in\mathbb N$ and $i=1,\dotsc,k_n$,
	$u_{n,i}$ is reduced,
\item
\label{item:presentation_construction.main_properties.3}
	for every $n\in\mathbb N$,
	$2(\abs{u_{n,1}}+\dotsb+\abs{u_{n,k_n}})\ge(1-\lambda_n)\abs{r_n}$,
\item
\label{item:presentation_construction.main_properties.4}
	for every $n\in\mathbb N$ and $i=1,\dotsc,k_n$,
	$\abs{u_{n,i}}\le\nu_n\abs{r_n}$,
\item
\label{item:presentation_construction.main_properties.5}
	for every $n_1,n_2\in\mathbb N$,
	$i_1=1,\dotsc,k_{n_1}$ and $i_2=1,\dotsc,k_{n_2}$,
	if $u_{n_1,i_1}^{\sigma_1}=p_1sq_1$ and
	$u_{n_2,i_2}^{\sigma_2}=p_2sq_2$ with
	$\sigma_1,\sigma_2\in\{\pm1\}$,
	then either
	$$
	(n_1,i_1,\sigma_1,p_1,q_1)=(n_2,i_2,\sigma_2,p_2,q_2),
	$$
	or
	$$
	\mu_{n_1}\abs{r_{n_1}}\ge\abs{s}\le\mu_{n_2}\abs{r_{n_2}},
	$$
\item
\label{item:presentation_construction.main_properties.6}
	for every $n\in\mathbb N$ and $i=1,\dotsc,k_n$,
	if $s$ is a common subword of $u_{n,i}^{\pm1}$
	and of a concatenation of several copies of
	$z_1^{\pm1},z_2^{\pm1}$, then
	$$
	\abs{s}\le\mu_n\abs{r_n},
	$$
\item
\label{item:presentation_construction.main_properties.7}
	$$
	\lim_{n\to\infty}\chi_n=-\infty,
	$$
\item
\label{item:presentation_construction.main_properties.8}
	for every $n\in\mathbb N$,
	\begin{equation*}
	\label{display:main_inequality}
	\gamma_n<\frac{1}{2}
	\quad\text{and}\quad
	(3-3\chi_n)\mu_n+(1-\chi_n)\nu_n<\frac{1}{2}-\gamma_n,
	\end{equation*}
\item
\label{item:presentation_construction.main_properties.9}
	for every $n\in\mathbb N$ and every $i<n$ such that $i\in A$
	(\ie\ $r_i$ is of the first kind),
	$$
	\abs{w_i}<(1-2\gamma_n)\abs{r_n},
	$$
\item
\label{item:presentation_construction.main_properties.10}
	for every $n\in\mathbb N$,
	$$
	\abs{v}<(1-2\gamma_n)\abs{r_n},
	$$
\item
\label{item:presentation_construction.main_properties.11}
	the normal closures of the elements $[r_1],[r_2],\dotsc$
	in the free group $F$ (presented by \tGP{a,b}{\varnothing})
	are pairwise distinct,
\item
\label{item:presentation_construction.main_properties.12}
	the elements $[r_1],[r_2],\dotsc$ are not proper powers in~$F$.
\end{enumerate}
In fact,
conditions (C\ref{item:presentation_construction.main_properties.11})
and (C\ref{item:presentation_construction.main_properties.12})
can be deduced from the previous ones,
but it is easier to deduce them from a suitable choice of
$\{k_n\}_{n\in\mathbb N}$,
$\{u_{n,i}\}_{n\in\mathbb N;i=1,\dotsc,k_n}$, etc.

All of these conditions may be fulfilled as follows.
First, choose an arbitrary sequence
of integers $\{k_n\}_{n\in\mathbb N}$ such that
$k_n\ge3$ for all $n$ and
$$
\lim_{n\to\infty}k_n=\infty.
$$
Then, for every $n$, set
$$
\nu_n=\frac{1}{\kappa_n}=\frac{1}{2k_n},\quad\chi_n=4-k_n.
$$
Observe that
$$
\lim_{n\to\infty}\chi_n=-\infty,
$$
and that for every $n$,
$$
2\nu_n<\frac{1}{2}\quad\text{and}\quad(3-\chi_n)\nu_n<\frac{1}{2}.
$$
Now, for every $n$,
choose $\lambda_n$ and $\mu_n$ sufficiently small so that
$$
\gamma_n<\frac{1}{2}
\quad\text{and}\quad
\gamma_n+(3-3\chi_n)\mu_n+(1-\chi_n)\nu_n<\frac{1}{2}.
$$
Finally, given that $z_1=a^2$ and $z_2=b^2$,
the family $\{u_{n,i}\}_{n\in\mathbb N; i=1,\dots,k_n}$
may be chosen in the following form:
$$
u_{n,i}=\prod_{j=2M_n(i-1)+1}^{2M_ni}a^{j}b^{2M_nk_n+1-j},
\label{display:u_and_z.sgicw.def}
$$
where $\{M_n\}_{n\in\mathbb N}$ is a suitable
sequence of ``large'' positive integers ``rap\-id\-ly'' growing
to infinity
(compare with formula (4) in \cite{Muranov:2007:fgisgicw}).
Observe that:
\begin{enumerate}
\item
	$\abs{u_{n,1}}=\dotsb=\abs{u_{n,k_n}}=2M_n(2M_nk_n+1)$ for every $n$,
\item
	if $u_{n_1,i_1}=p_1sq_1$,
	$u_{n_2,i_2}=p_2sq_2$, and
	$(n_1,i_1,p_1,q_1)\ne(n_2,i_2,p_2,q_2)$,
	then $s$ is a subword of
	$$
	a^{j}\,b^{2M_{n_1}k_{n_1}+1-j}\,a^{j+1}
	\quad\text{or}\quad
	b^{2M_{n_1}k_{n_1}+1-j}\,a^{j+1}\,b^{2M_{n_1}k_{n_1}-j}
	$$
	for some $j$
	(because $2M_{n_1}k_{n_1}+1\ne 2M_{n_2}k_{n_2}+1$ unless $n_1=n_2$),
	and hence $\abs{s}\le 4M_{n_1}k_{n_1}+1$,
\item
	if $s$ is a common subword of $u_{n,i}$
	and of a concatenation of several copies of
	$a^2$ and $b^2$,
	then $s$ is a subword of
	$$
	a^{j}\,b^{2M_nk_n+1-j}\,a^{j+1}\,b^{2M_nk_n-j}
	\quad\text{or}\quad
	b^{2M_nk_n+1-j}\,a^{j+1}\,b^{2M_nk_n-j}\,a^{j+2}
	$$
	for some $j$
	(because $2M_nk_n+1$ is odd),
	and hence $\abs{s}\le 4M_nk_n+4$.
\end{enumerate}

Thus the families $\mathcal Q$, $\mathcal R$,
$\{w_n\}_{n\in A}$, $\{x_n\}_{n\in A}$,
$\{w_n\}_{n\in\mathbb N\setminus A}$, $\{m_n\}_{n\in\mathbb N\setminus A}$,
$\{k_n\}_{n\in\mathbb N}$, $\{u_{n,i}\}_{n\in\mathbb N,i=1,\dotsc,k_n}$,
$\{\chi_n\}_{n\in\mathbb N}$, $\{\kappa_n\}_{n\in\mathbb N}$,
$\{\lambda_n\}_{n\in\mathbb N}$, $\{\mu_n\}_{n\in\mathbb N}$,
$\{\nu_n\}_{n\in\mathbb N}$, $\{\gamma_n\}_{n\in\mathbb N}$
and the set $I$
have been constructed.
(Recall that their construction depended on the choice of the words
$v$, $z_1$, $z_2$, and on the list of conditions $C_1,C_2,\dotsc$.)
Let, from now on, $G$ denote the group presented by \tGP{a,b}{\mathcal R}.

\begin{proposition}
\label{proposition:simple.norms_stably_bounded.(2.1)}
If\/ $G$ is not trivial\textup, then it is simple\textup.
There is no stably unbounded conjugation-invariant
norm on\/~$G$\textup.
\end{proposition}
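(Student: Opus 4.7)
The plan is to show that $G$ satisfies every condition $C_n$ on the list, despite certain first-kind relators having been discarded; both halves of the proposition then follow directly from the two families of conditions. For $n \in I$, the condition $C_n$ is encoded by the retained relator: if $n \in A$, then $\ulcorner r_n = 1\urcorner$ reads $[x_n]_G = [w_n]_G^{[u_{n,1}]_G} \dotsm [w_n]_G^{[u_{n,k_n}]_G}$, a product of conjugates of $[w_n]_G$; if $n \notin A$, it reads $[w_n]_G^{m_n k_n} = [v]_G^{[u_{n,1}]_G} \dotsm [v]_G^{[u_{n,k_n}]_G}$, so $C_n$ holds with $p = m_n k_n$ and $q = k_n$. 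For $n \in A \setminus I$, the index was omitted precisely because $[w_n]$ was already trivial in the group presented by \tGP{a,b}{r_i,\ i\in I,\ i<n}, hence also in the further quotient $G$; so the hypothesis of $C_n$ fails in $G$ and $C_n$ holds vacuously. This verification is the only substantive step in the proof; everything else is bookkeeping.

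Simplicity is then immediate. Given a nontrivial $g \in G$, write $g = [w]_G$; the first-kind condition indexed by $(w, x)$, applied for each $x \in \{a, b\}$, states that $[x]_G$ is a product of conjugates of $g$, and hence lies in the normal closure of $g$. Since $\{[a]_G, [b]_G\}$ generates $G$, this normal closure equals all of $G$. Thus $G$ has no proper nontrivial normal subgroup and (being nontrivial by hypothesis) is simple.

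For the norm statement, let $\nu$ be any conjugation-invariant norm on $G$, set $c = \nu([v]_G)$, and fix an arbitrary $g = [w]_G \in G$. For each $N \in \mathbb N$, the second-kind condition with parameters $w$ and $N$ provides $p, q \in \mathbb N$ with $p \ge N q$ and $g^p$ expressible as a product of $q$ conjugates of $[v]_G$; subadditivity and conjugation-invariance of $\nu$ then give $\nu(g^p) \le q c$, so
\[
\st\nu(g) \le \frac{\nu(g^p)}{p} \le \frac{q c}{N q} = \frac{c}{N}.
\]
Letting $N \to \infty$ yields $\st\nu(g) = 0$; as $g$ was arbitrary, $\nu$ is not stably unbounded.
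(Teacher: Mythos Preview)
Your proof is correct and follows essentially the same approach as the paper's own proof. You are somewhat more explicit than the paper in verifying that \emph{all} conditions $C_n$ hold in $G$, including those with $n\in A\setminus I$ where the relator was discarded; the paper leaves this implicit in the phrase ``the imposed relations of the first/second kind,'' but your explicit check that such conditions hold vacuously (since $[w_n]_G=1$) is a welcome clarification rather than a different method.
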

\begin{proof}
Because of the imposed relations of the first kind,
the normal closure of every nontrivial element of $G$
contains both generators $[a]$ and $[b]$,
and hence is the whole of~$G$.

Consider an arbitrary conjugation-invariant norm $\theta$ on $G$.
It follows from the imposed relations
of the second kind that for every $g\in G$
and every $\varepsilon>0$, there exists
$p\in\mathbb N$ such that $\theta(g^p)\le\varepsilon p\theta([v])$.
Hence for every $g\in G$,
$$
\inf_{p\in\mathbb N}\frac{\theta(g^p)}{p}=0.
$$
\end{proof}

In order for $G$ to have decidable word and conjugacy problems,
the presentation \tGP{a,b}{\mathcal R} shall be constructed in a less
random way, so as in particular to be recursive.

\begin{lemma}
\label{lemma:effective_presentation.(2.2)}
The construction described above in this section can be carried out in
such a manner that the following additional properties be satisfied\/\textup:
\begin{enumerate}
\item
\label{item:lemma.effective_presentation.1}
	$\{k_n\}_{n\in\mathbb N}$\textup,
	$\{\chi_n\}_{n\in\mathbb N}$\textup, and
	$\{\lambda_n\}_{n\in\mathbb N}$\textup,
	$\{\mu_n\}_{n\in\mathbb N}$\textup,
	$\{\nu_n\}_{n\in\mathbb N}$
	are recursive sequences of integer and rational
	numbers\textup, respectively\textup,
\item
\label{item:lemma.effective_presentation.2}
	the set $A$ and the families $\mathcal Q$\textup,
	$\{w_n\}_{n\in A}$\textup, $\{x_n\}_{n\in A}$\textup,
	$\{w_n\}_{n\in\mathbb N\setminus A}$\textup,\linebreak[4]
	$\{m_n\}_{n\in\mathbb N\setminus A}$\textup,
	$\{u_{n,i}\}_{n\in\mathbb N,i=1,\dotsc,k_n}$ are recursive\textup,
	and the sequence\linebreak[4]
	$\{(1-2\gamma_n)\abs{r_n}\}_{n\in\mathbb N}$ is bounded
	from below by a recursive sequence of integers
	monotonically tending to\/ $\infty$\textup,
\item
\label{item:lemma.effective_presentation.3}
	if there is an algorithm that decides the word problem for all
	finite subpresentations of\/ \tGP{a,b}{\mathcal R}
	\textup(the input consisting of a finite part of the indexed family
	$\mathcal R$ and
	a pair of words to compare\/\textup)\textup, then
	the set\/ $I$ and the family\/ $\mathcal R$ are recursive\textup.
\end{enumerate}
\end{lemma}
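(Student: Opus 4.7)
The plan is to observe that every step of the construction admits a computable realization, and to exhibit such a realization explicitly. The three items of the lemma are largely independent and each reduces to a finite search at stage~$n$; no step presents a conceptual obstacle, and the main burden is a routine but careful verification that each of the twelve conditions (C\ref{item:presentation_construction.main_properties.1})--(C\ref{item:presentation_construction.main_properties.12}) is algorithmically testable from the data already fixed at stage~$n$.

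For item~(\ref{item:lemma.effective_presentation.1}), take $k_n = n + 3$, which is recursive with $k_n \ge 3$ and $k_n \to \infty$, and then set $\nu_n = 1/(2k_n)$ and $\chi_n = 4 - k_n$ as prescribed in the text. The inequalities to be enforced by $\lambda_n$ and $\mu_n$ are strict, with rational coefficients computable from~$n$, so a systematic search, say over $\lambda_n, \mu_n \in \{1/N : N \in \mathbb N\}$ in increasing order of~$N$, locates suitable values effectively.

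For item~(\ref{item:lemma.effective_presentation.2}), fix a standard recursive enumeration of the conditions $C_n$; this immediately yields the recursiveness of $A$, $\{w_n\}$, $\{x_n\}$, and $\{m_n\}$. It then suffices to choose $M_n$ recursively, since the explicit formula of the text determines $u_{n,1}, \dotsc, u_{n,k_n}$ and hence~$r_n$. Conditions (C\ref{item:presentation_construction.main_properties.2}) and (C\ref{item:presentation_construction.main_properties.6}) are automatic from the shape of $u_{n,i}$; conditions (C\ref{item:presentation_construction.main_properties.3})--(C\ref{item:presentation_construction.main_properties.5}), (C\ref{item:presentation_construction.main_properties.9}), and (C\ref{item:presentation_construction.main_properties.10}) translate, via the same formula, into finitely many strict polynomial inequalities in $M_n$ whose coefficients are computable from the data fixed at stage~$n$ and which hold for all sufficiently large~$M_n$; and (C\ref{item:presentation_construction.main_properties.11}), (C\ref{item:presentation_construction.main_properties.12}) are secured for large $M_n$ by the non-periodicity observation of the text, combined with the additional requirement $|r_{n-1}| < |r_n|$ (which is itself a polynomial inequality in $M_n$). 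To obtain the required recursive lower bound, also impose $(1 - 2\gamma_n)|r_n| \ge n$, which again holds for large~$M_n$; then $n \mapsto n$ is the desired integer lower bound monotonically tending to~$\infty$. Since each of these inequalities can be checked algorithmically, the least $M_n$ satisfying them is found by brute search, yielding recursive $\{M_n\}$ and hence recursive $\{u_{n,i}\}$ and~$\mathcal Q$.

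For item~(\ref{item:lemma.effective_presentation.3}), $I$ is defined by the inductive clause already given in the text: $n \in I$ iff $n \notin A$ or $[w_n] \ne 1$ in the group $G_n$ presented by \tGP{a,b}{r_i,\ i \in I \cap [1,n)}. At stage~$n$, the set $I \cap [1, n)$ has been computed and is finite, so $G_n$ is a finite subpresentation of \tGP{a,b}{\mathcal R}, and the hypothesis of item~(\ref{item:lemma.effective_presentation.3}) supplies an algorithm for testing $[w_n]_{G_n} = 1$. The induction is therefore effective, and $I$ and $\mathcal R = \{r_n : n \in I\}$ are recursive.
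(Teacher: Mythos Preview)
Your proposal is correct and follows essentially the same approach as the paper's proof, though with considerably more detail than the paper provides (the paper's proof is quite terse). One minor slip: you say that condition (C\ref{item:presentation_construction.main_properties.6}) is ``automatic from the shape of $u_{n,i}$,'' but in fact the structural observation in the text only gives $\lvert s\rvert \le 4M_nk_n+4$, and one still needs the inequality $4M_nk_n+4 \le \mu_n\lvert r_n\rvert$; this is, however, of exactly the same type as the inequalities you already handle for (C\ref{item:presentation_construction.main_properties.5}), so it folds into your brute search for $M_n$ without difficulty.
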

\begin{proof}
To satisfy (\ref{item:lemma.effective_presentation.1}),
it suffices to define $k_n=3+n$ for all $n$, and to choose the four other
sequences accordingly in an effective manner.

To satisfy the rest, the list $C_1, C_2, \dotsc$ should be enumerated so
as to be recursive.
Then the family $\{u_{n,i}\}_{n\in\mathbb N,i=1,\dotsc,k_n}$ can
be constructed effectively and so that
$(1-2\gamma_n)\abs{r_n}\ge n$ for all $n$,
and hence (\ref{item:lemma.effective_presentation.2}) be satisfied.

Suppose there is a way to effectively solve the word problem for
all finite subpresentations of \tGP{a,b}{\mathcal R}.
Then, once (\ref{item:lemma.effective_presentation.2}) is satisfied, and
hence the indexed family $\mathcal Q$ is recursive,
the set $I$ can be constructed inductively
by deciding for each $n\in A$, whether
$[w_n]=1$ in the group presented by
\tGP{a,b}{r_i,\ i\in I, i<n}, and hence concluding whether or not
$n\in I$.
If the family $\mathcal Q$ and the set $I$ are recursive,
then $\mathcal R$ is recursive as well, and
(\ref{item:lemma.effective_presentation.3}) is satisfied.
\end{proof}

\begin{remark}
\label{remark:recursive_presentation__family_vs_set}
Note that in items
(\ref{item:lemma.effective_presentation.2})
and
(\ref{item:lemma.effective_presentation.3})
of Lemma \ref{lemma:effective_presentation.(2.2)},
nothing is stated explicitly about recursiveness of
$\mathcal Q$ and $\mathcal R$ as sets,
\ie\ whether the sets
$\{\,r_n\,\mid\,n\in\mathbb N\,\}$ and
$\{\,r_n\,\mid\,n\in I\,\}$ are recursive.
However, if the sequence $\{\abs{r_n}\}_{n\in\mathbb N}$
is bounded from below by a recursive sequence of integers
monotonically tending to\/ $\infty$, and if the indexed families
$\mathcal Q$ and $\mathcal R$ are recursive
(as functions $\mathbb N\to\{a^{\pm1},b^{\pm1}\}^*$ and
$I\to\{a^{\pm1},b^{\pm1}\}^*$, respectively),
then the corresponding sets
$\{\,r_n\,\mid\,n\in\mathbb N\,\}$ and
$\{\,r_n\,\mid\,n\in I\,\}$ are recursive as well.
\end{remark}



\section{Maps and diagrams}
\label{section:maps-diagrams}

The terminology related to graphs, combinatorial complexes,
maps, and diagrams used in this paper shall be similar to the one
in \cite{Muranov:2007:fgisgicw}, but there shall be a few differences.
Definitions of maps and diagrams in a more topological
(less combinatorial) way may be found in
\cite{LyndonSchupp:2001:cgt,
Olshanskii:1989:gosg-rus,
Olshanskii:1991:gdrg-eng}.

\subsection{Combinatorial complexes}
\label{subsection:maps-diagrams.combinatorial_complexes}

Roughly speaking, a \emph{combinatorial complex\/} shall be viewed as
a combinatorial model of a CW-complex.
As defined in \cite{Muranov:2007:fgisgicw}, combinatorial complexes
are somewhat restricted in their capability of representing CW-complexes,
for example they cannot represent
a CW-complex in which the whole characteristic boundary of some $2$-cell
is attached to a single $0$-cell, but otherwise,
at least in the $2$-dimensional case,
they adequately represent the combinatorics of CW-complexes.
The CW-complex represented by a given combinatorial complex
shall be called its \emph{geometric realization}.
Standard terminology of CW-complexes may also be used for
combinatorial complexes, as the analogy is usually clear.

The structure of each cell in a combinatorial complex comprises its
\emph{characteristic boundary\/} and its
\emph{attaching morphism}.
The characteristic boundary of a $0$-cell is empty,
the characteristic boundary of a $1$-cell is a complex consisting
of $2$ vertices,
the characteristic boundary of a $2$-cell is a
\emph{combinatorial circle}.
The attaching morphism of an $n$-cell $x$ of
a combinatorial complex $\Delta$
is a morphism from the characteristic boundary of $x$ to
the $(n-1)$-skeleton of $\Delta$.
The set of all $n$-cells of a complex $\Delta$
shall be denoted by~$\Delta(n)$.

In $2$-dimensional combinatorial complexes, $0$-cells shall be called
\emph{vertices}, $1$-cells shall be called \emph{edges},
and $2$-cells shall be called \emph{faces}.

A \emph{path\/} in a combinatorial complex is
an alternating sequence of vertices and oriented edges in which every
oriented edges is immediately preceded by its initial vertex and
immediately succeeded by its terminal vertex.
A path of length $0$ (consisting of a single vertex)
shall be called \emph{trivial}.
For simplicity, a nontrivial path may be written
as the sequence of its oriented edges.
A path is \emph{closed\/} if its \emph{initial\/} and \emph{terminal\/}
vertices coincide.
A path is \emph{reduced\/} if it does not contain a subpath
of the form $ee^{-1}$ where $e$ is an oriented edge.
A closed path is \emph{cyclically reduced\/} if
it is reduced and its first
oriented edge is not inverse to its last oriented edge.
A path is \emph{simple\/} if
it is nontrivial, reduced, and
none of its \emph{intermediate\/} vertices appears in it more than once
(a closed path can be simple).

Two combinatorial complexes shall be called
\emph{combinatorially equivalent\/}
if they possess isomorphic \emph{subdivisions}.

A $2$-dimensional combinatorial complex whose geometric realization
is a surface (with or without boundary) shall be called a
\emph{combinatorial surface}.

If $\Delta$ is a combinatorial complex, then its
\emph{Euler characteristic\/} shall be denoted by~$\chi(\Delta)$.

In what follows, the word ``combinatorial''
shall sometimes be omitted for brevity.
For example, a \emph{sphere\/} shall also mean a
\emph{combinatorial sphere},
that is a complex whose geometric realization is a sphere,
or equivalently a complex combinatorially equivalent
to any particular ``sample'' combinatorial sphere.

\subsection{Maps}
\label{subsection:maps-diagrams.maps}

A \emph{nontrivial connected map\/} $\Delta$
consists of the following data:
\begin{enumerate}
\item
	a nontrivial (\ie\ not consisting of a single vertex)
	non-empty finite connected combinatorial complex, which shall also be
	denoted $\Delta$ if no confusion arises;
\item
	a finite collection of combinatorial circles
	together with their morphisms to $\Delta$,
	which shall be called the \emph{characteristic boundary\/}
	of~$\Delta$;
\item
	a choice of simple closed paths in the characteristic
	boundaries of all faces of $\Delta$ and in
	all components of the characteristic boundary
	of $\Delta$, called the \emph{characteristic contours\/}
	of faces and \emph{characteristic contours\/} of $\Delta$,
	respectively,
	\ie\ to every face there should be assigned its characteristic contour,
	and in every component of the characteristic boundary of $\Delta$,
	there should be chosen one characteristic contour of $\Delta$;
	moreover, all characteristic contours of $\Delta$
	shall be enumerated for further convenience.
\end{enumerate}
The image of the characteristic contour of a face in the $1$-skeleton
of $\Delta$ (under the attaching morphism of that face)
shall be called the \emph{contour\/} of that face,
and the image of a characteristic contour of $\Delta$ shall be called
a \emph{contour\/} of $\Delta$ and shall be enumerated with
the same number.
The only requirement to this structure is that
attaching a new face along each of the contours of $\Delta$
must turn $\Delta$ into a closed combinatorial surface.

A \emph{trivial disc map}, or simply a \emph{trivial map},
is defined as
a trivial (\ie\ consisting of a single vertex) combinatorial complex
together with another trivial complex mapped to it in the role of its
\emph{characteristic boundary\/} and together with the corresponding
trivial \emph{characteristic contour\/} and
also trivial \emph{contour}.
(A trivial disc map cannot be turned into a combinatorial sphere
by attaching a face along its contour because a face cannot
be attached to a single vertex.)

In general a \emph{map\/} consists of a finite number of connected
components each of which is either a nontrivial connected map
or a trivial disc map, with the assumption that all characteristic
contours are enumerated by different numbers.

The contour of a face $\Pi$ shall be denoted by $\cntr\Pi$.
The contours of a map $\Delta$ shall be denoted by
$\cntr_1\Delta,\cntr_2\Delta$, etc., but
if $\Delta$ has only one contour, then it may be
denoted~$\cntr{}\Delta$.

Note that it is more usual to fix
contours and characteristic contours only up to cyclic shifts.
The definition given above is chosen because it is
convenient to have
each contour defined as a specific path, rather than a ``cyclic path,''
to have a ``base point'' chosen in the characteristic boundary
of each face and in each component of the characteristic boundary
of the map,
and because in the context of \emph{diagrams\/} (defined below),
it is convenient to be able to label contours with specific group words,
rather than ``cyclic group words.''

It is useful to note that a map can be reconstructed
up to isomorphism and choice of characteristic contours
from its characteristic boundary
and the characteristic boundaries of its faces if
for each pair of oriented edges taken
from the characteristic boundary of the map and
the characteristic boundaries of its faces
is known whether they are mapped to the same oriented edge
in the $1$-skeleton of the map.

A map is \emph{closed\/} if it has no contours,
or, equivalently, if its underlying complex is a closed surface.
If a map $\Delta$ has no trivial disc maps as connected components
(which shall be called its \emph{trivial connected commponents\/}),
then the \emph{closure\/} of $\Delta$
is the closed map $\bar\Delta$ obtained from $\Delta$
by naturally turning each component of
the characteristic boundary of $\Delta$ into a new ``outer'' face,
which may be viewed as attaching new faces along all contours of $\Delta$
and choosing the characteristic contours of the attached faces
so that their contours coincide respectively
with the contours of~$\Delta$.

A map shall be called \emph{orientable\/}
if the closure of each of its nontrivial connected
components is orientable.
To \emph{orient\/} a map shall mean to orient all of its faces,
which means to choose \emph{positive\/} and \emph{negative\/}
directions in their characteristic boundaries,
and to choose \emph{positive\/} and \emph{negative\/}
directions in all components of the characteristic boundary
of the map
so that this choice would induce
an orientation of the closures of all nontrivial connected components.
Nontrivial contours and characteristic contours
shall be called \emph{positive\/} or \emph{negative\/} accordingly,
and positive contours shall be said to \emph{agree\/}
with its orientation.

A \emph{singular combinatorial disc\/} is any
simply connected non-empty subcomplex of a combinatorial sphere.%
\footnote{The word ``singular'' here apparently is not related in meaning
to the same word as used in \cite{CollinsHuebschmann:1982:sdir}
or~\cite{LyndonSchupp:2001:cgt}.}
A \emph{disc map\/} is a map whose underlying complex
is a singular combinatorial disc.
Alternatively, a disc map can be defined as a connected one-contour map
of Euler characteristic~$1$.
The closure of a nontrivial disc map is a spherical map.

A map is \emph{simple\/} if its contours are
pairwise disjoint simple closed paths.

An \emph{elementary spherical map\/} is
a spherical map with exactly $2$ faces
whose $1$-skeleton is a combinatorial circle.
Such maps are the least interesting ones.

The following lemma follows from the classification of
compact (or finite combinatorial) surfaces.

\begin{lemma}
\label{lemma:positive_Euler_characteristic.(3.1)}
The maximal possible Euler characteristic of a closed connected
combinatorial surface is\/ $2$\textup, and among all
such surfaces\textup,
only spheres have Euler characteristic\/ $2$\textup, and
only projective planes have Euler characteristic~$1$.
The maximal possible Euler characteristic of a
proper connected subcomplex of a combinatorial
surface is\/ $1$\textup, and
every such subcomplex is a singular combinatorial disc\textup.
\end{lemma}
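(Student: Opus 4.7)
The statement splits cleanly into claims about closed connected combinatorial surfaces and claims about proper connected subcomplexes, and I would handle the two parts separately. For the first part I would simply invoke the classification of closed connected surfaces: every such surface is either the sphere, a connected sum of $g\ge 1$ tori, or a connected sum of $k\ge 1$ projective planes, with Euler characteristic $2$, $2-2g$, or $2-k$ respectively. The three assertions that $\chi\le 2$, that $\chi=2$ forces the sphere, and that $\chi=1$ forces the projective plane then read off directly from this list, applied to the geometric realization.

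For the second part, let $\Delta'$ be a proper connected subcomplex of a combinatorial surface~$\Delta$. Since $\Delta'$ is connected it lies in one connected component of~$\Delta$, and by attaching faces along the contours of that component I may assume $\Delta$ itself is a closed connected combinatorial surface with $\Delta'\subsetneq\Delta$. The plan is to pass to a sufficiently fine subdivision of~$\Delta$ (for instance a second barycentric-type subdivision) and take for $N$ the closed regular neighborhood of~$\Delta'$, i.e.\ the union of all closed cells of the subdivision that meet~$\Delta'$. Standard regular-neighborhood theory on surfaces then yields that $N$ is a combinatorial surface with boundary, that $\Delta'$ is a deformation retract of~$N$ (so $\chi(\Delta')=\chi(N)$), and that $\partial N\ne\varnothing$ because $\Delta'\ne\Delta$ and $\Delta$ is connected.

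The bound $\chi(\Delta')\le 1$ would then follow from the doubling trick: gluing two copies of~$N$ along their common boundary produces a closed connected surface $DN$ with
$$\chi(DN)=2\chi(N)-\chi(\partial N)=2\chi(N),$$
since $\partial N$ is a disjoint union of combinatorial circles and so has Euler characteristic zero. The first part of the lemma gives $2\chi(N)\le 2$, hence $\chi(\Delta')=\chi(N)\le 1$, with equality if and only if $DN$ is a sphere, which by the same classification happens if and only if $N$ is a combinatorial disc. In that equality case $\Delta'$ is a deformation retract of the disc~$N$, and hence simply connected and non-empty; capping $N$ off with a single face produces a combinatorial sphere that contains~$N$ and, via the subdivision and the paper's notion of combinatorial equivalence, contains $\Delta'$ as a subcomplex. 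Thus $\Delta'$ is a simply connected non-empty subcomplex of a combinatorial sphere, that is, a singular combinatorial disc in the sense of the paper.

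The main obstacle I expect is not the topological content, which is standard, but the combinatorial bookkeeping: one has to verify that in some subdivision of~$\Delta$ the closed star of~$\Delta'$ really is a combinatorial surface whose boundary is a disjoint union of combinatorial circles and onto which $\Delta'$ combinatorially collapses. This is PL regular-neighborhood theory specialised to surfaces, but since the paper's combinatorial complexes are not assumed simplicial, the set-up has to be carried out via subdivisions and combinatorial equivalence rather than by citing the simplicial version directly.
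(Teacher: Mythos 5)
The paper offers no proof of this lemma at all; it simply remarks that the statement ``follows from the classification of compact (or finite combinatorial) surfaces.'' So there is no argument to compare against, and your sketch is essentially an expansion of that remark. For the first part (closed connected surfaces) your direct read-off from the classification is exactly right. For the second part your route via the closed regular neighborhood $N$ and the doubling trick $\chi(DN)=2\chi(N)$ is the standard way to leverage the closed classification, and the identification of $N$ with a disc when $\chi(N)=1$ via the classification of compact bordered surfaces is correct. The reduction to a closed ambient surface at the start is also fine.

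There is one spot where the sketch as written does not yet deliver the literal statement, and it is exactly the one you flag: the capped-off neighborhood is a combinatorial sphere containing a \emph{subdivision} $\Delta''$ of $\Delta'$, not $\Delta'$ itself, whereas the paper's definition of a singular combinatorial disc requires $\Delta'$ literally to be a subcomplex of a combinatorial sphere (and this literal reading is what is used later, e.g.\ in the proofs of the Estimating Lemmas, where a subcomplex $\Psi$ with $\chi(\Psi)=1$ is asserted to be the underlying complex of a disc submap). ``Via combinatorial equivalence'' does not close this, since combinatorial equivalence of ambient complexes does not transport subcomplexes. A workable fix is to choose the refinement of $\Delta_0$ so that only the cells \emph{outside} $\Delta'$ are subdivided, keeping $\Delta'$ itself a subcomplex of the refined surface; then the neighborhood one takes contains $\Delta'$ verbatim and can be capped off to a sphere containing $\Delta'$ as a subcomplex. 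That additional care is the genuine content missing from the final paragraph; the rest of the plan is sound.
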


\begin{corollary.lemma}
\label{corollary.lemma:maps__positive_Euler_characteristic.(3.2)}
The maximal possible Euler characteristic of a connected map with\/
$n$ contours is\/~$2-n$\textup.
A connected map of Euler characteristic\/ $2$ is spherical\textup.
A closed connected map of Euler characteristic\/ $1$
is projective-planar\textup.
A non-closed connected map of Euler characteristic\/ $1$ is disc\textup.
\end{corollary.lemma}

Every subcomplex $\Gamma$ of a map $\Delta$
can be endowed with a structure of a \emph{submap},
which is unique up to choice and enumeration of
characteristic contours of $\Gamma$
(characteristic contours of faces of $\Gamma$
can be taken from $\Delta$).
Note that the characteristic boundary of $\Gamma$ is uniquely
determined, and hence so is the underlying complex of its closure.

\subsection{Diagrams}
\label{subsection:maps-diagrams.diagrams}

Maps have been introduced to be used as underlying
objects of \emph{diagrams\/} over group presentations.

Recall that a \emph{group presentation\/} \tGP{X}{R}
consists of an alphabet $X$ and an indexed family $R$
of group words over $X$ (words over $X^{\pm1}$)
called \emph{defining relators}.
A distinction shall be made between
\emph{elements\/} of $R$
and \emph{indexed members\/} of $R$:
an indexed member is an element together with its index in $R$,
hence the same relator can be included in $R$ multiple times as
distinct indexed members.
(Using indexed families of realtors instead of sets is convenient in
the case when a priori it is not known if there are any
repetitions in the imposed relations, and also
for constructing certain geometric object from a given
group presentation.)
The group presented by \tGP{X}{R} is formally generated by $X$
subject to the \emph{defining relations\/}
$\ulcorner r=1\urcorner$, $r\in R$.

If \tGP{X}{R} is a group presentation where $R$ is an indexed
family of nontrivial group words, then
the \emph{combinatorial realization\/} $K(X;R)$ of \tGP{X}{R}
shall be defined as a combinatorial complex with $1$ vertex,
$\norm{A}$ edges (loops), $\norm{R}$ faces, and additional structure
consisting of
\begin{enumerate}
\item
	a bijection between the set of oriented edges of $K(X;R)$
	and the set $X^{\pm1}$ such that mutually inverse oriented
	edges be associated with mutually inverse group letters;
	the group letter associated to an oriented edge shall be called
	the \emph{label\/} of that oriented edge;
\item
	a bijection between the set of faces of $K(X;R)$
	and the set of indexed members of $R$;
	the indexed members of $R$ associated to a face shall be called
	the \emph{label\/} of that face;
\item
	a choice of a simple closed path in the characteristic boundary
	of each face of $K(X;R)$,
	which shall be called the \emph{characteristic contour\/}
	of that face,
	and whose image in the $1$-skeleton of $K(X;R)$
	shall be called the \emph{contour\/}
	of that face;
\end{enumerate}
this structure shall satisfy the condition that the contour label
of each face shall coincide with the label of that face viewed as
a group word.
The group presented by \tGP{X}{R} is naturally isomorphic
to the fundamental group of~$K(X;R)$.

A \emph{diagram\/} over a group presentation \tGP{X}{R} is
a map in which
all oriented edges are labeled with elements of $X^{\pm1}$
and all faces are labeled with indexed relators from $R$
so that mutually inverse oriented edges are
labeled with mutually inverse group letters and
if a face $\Pi$ is labeled with
an indexed relator $r$, then the contour label of $\Pi$
is the group word $r$.
Labeling of a diagram $\Delta$ over \tGP{X}{R}
is equivalent to defining
a morphism from its underlying map to $K(X;R)$
that preserves the characteristic contours of faces.

In a diagram or a combinatorial realization of a presentation,
the labels of an oriented edge $e$, a path $p$,
and a face $\Pi$
shall be denoted by $\ell(e)$, $\ell(p)$, and $\ell(\Pi)$, respectively.

\subsubsection{Augmented diagrams}
\label{subsubsection:maps-diagrams.diagrams.augmented_diagrams}

An \emph{augmented diagram\/} over \tGP{X}{R}, also
\linebreak[4]
called a \emph{$0$-refined diagram\/}
in \cite{Olshanskii:1991:gdrg-eng},
is, similarly to a regular diagram, a kind of a labeled map,
but in which edges and faces are divided
into \emph{regular\/} and \emph{auxiliary}.
Regular edges shall be called \emph{$1$-edges}, and regular faces
shall be called \emph{$2$-faces}.
Auxiliary edges shall be called \emph{$0$-edges}, and
auxiliary faces shall be divided into
\emph{$0$-faces\/} and \emph{$1$-faces}.
Regular oriented edges shall be labeled with elements
of $X^{\pm1}$, and regular faces shall be labeled with
indexed members of $R$ following the same rules as for
regular diagrams.
All oriented $0$-edges shall be labeled with the symbol~$1$.
A $0$-face can be incident only to $0$-edges, and
hence the contour label of every $0$-face has to be of the form $1^k$,
$k\in\mathbb N$.
The contour label of every $1$-face
has to be a cyclic shift of $x1^kx^{-1}1^l$ for some $x\in X$,
$k,l\in\mathbb N\cup\{0\}$.
Thus the contour label of every face of an augmented diagram
over \tGP{X}{R}
is either an element of $R$ or freely trivial.

Regular diagrams shall be viewed as augmented diagrams without
auxiliary edges or faces, but a \emph{diagram\/} by default
shall mean a regular diagram.

\subsubsection{Regularization of augmented diagrams}
\label{subsubsection:maps-diagrams.diagrams.regularization}

There is a procedure to canonically transform any augmented diagram
into a regular one which shall be called its
\emph{regularization}.
Essentially, this procedure consists in collapsing all
$0$-edges and $0$-faces into
vertices, and all $1$-faces onto incident $1$-edges;
there are however a few special cases.
Here follows a more formal definition of this transformation.

Let $\Delta$ be an augmented diagram.
To \emph{regularize\/} $\Delta$, the following steps shall be done
(in the given order):
\begin{enumerate}
\item
\label{item:regularization.1}
	for each contour of $\Delta$ that lies entirely on $0$-edges,
	this contour shall be filled in with a $0$-face, and a trivial
	connected component shall be added, whose contour shall replace
	the filled-in contour;
\item
\label{item:regularization.2}
	for each contour that does not lie entirely on $0$-edges
	but not entirely on $1$-edges either,
	a ``collar'' of $0$- and $1$-faces shall be ``glued'' to this
	contour so as to ``shift'' it to a new contour that lies
	entirely on $1$-edges and whose label is obtained from
	the label of the original contour by canceling all $1$ symbols;
	after this operation,
	all nontrivial contours lie entirely on $1$-edges;
\item
\label{item:regularization.3}
	for each nontrivial contour,
	this contour shall be temporarily filled in with an
	``outer'' face, which shall be regarded as a $2$-face,
	and which shall be removed at the end of the procedure;
\item
\label{item:regularization.4}
	each maximal connected subcomplex without $1$-edges
	(\ie\ containing only $0$-edges and $0$-faces)
	shall be collapsed into a point, unless
	it forms a closed connected component, in which case it shall be
	entirely removed;
	after this operation,
	the star of each vertex looks like
	a finite collection of discs with their centers ``glued''
	to this vertex
	(equivalently, the link is a disjoint union of circles);
\item
\label{item:regularization.5}
	split each vertex star which is ``glued'' from more than one disc
	into a disjoint union of these discs;
	after this operation, each nontrivial connected component
	is again a closed combinatorial surface;
\item
\label{item:regularization.6}
	remove all connected components that consist entirely
	of $1$-faces
	(observe that at this stage all such components are spherical);
\item
\label{item:regularization.7}
	collapse each of the remaining $1$-faces into an edge
	(observe that at this stage all $1$-faces are \emph{digons\/});
\item
\label{item:regularization.8}
	remove previously added ``outer'' faces.
\end{enumerate}

There is an alternative way to define the regularization of a given
augmented diagram $\Delta$ which shows its invariance.
As was noted after the definition of a map,
a map can be ``essentially''
reconstructed from the following data:
its characteristic boundary and
the collection of the characteristic boundaries of its faces
together with the information
which pairs of oriented edges shall be ``identified.''
To obtain the regularization of $\Delta$,
take its characteristic boundary and
the characteristic boundaries of all of its $2$-faces,
collapse all $0$-edges in the characteristic boundary of $\Delta$
(replacing circles consisting entirely of $0$-edges
with trivial components),
and reconstruct a new map by requiring that two oriented
edges be mapped to the same oriented edge if
their images in $\Delta$ either coincided
or were connected by a ``band'' of $1$-faces.
The characteristic contours of $\Delta$ and of
its faces shall be inherited by the regularization of $\Delta$
in the natural sense.

\begin{lemma}
\label{lemma:regularization.(3.3)}
Let\/ $\Delta_0$ be a connected augmented diagram
and let\/ $\Delta$ be its regularization\textup.
Let\/ $m$ be the number of contours of\/ $\Delta$
\textup(the same as for $\Delta_0$\textup)\textup,
and\/ $n$ be the number of connected components
of\/ $\Delta$\textup.
Then
\begin{enumerate}
\item
\label{item:lemma.regularization.1}
	$-m\ge\chi(\Delta)-2n\ge\chi(\Delta_0)-2$,
\item
\label{item:lemma.regularization.2}
	if\/ $\Delta_0$ is orientable\textup,
	then so is\/~$\Delta$\textup,
\item
\label{item:lemma.regularization.3}
	suppose\/ $\chi(\Delta)-2n=\chi(\Delta_0)-2$\textup,
	and suppose that\/ $\Psi$ is a nontrivial connected
	component of\/ $\Delta$ such that all the other
	components either are trivial or have
	spherical closure\/\textup;
	then the closures of\/ $\Psi$ and\/ $\Delta_0$
	are combinatorially equivalent\textup.
\end{enumerate}
\end{lemma}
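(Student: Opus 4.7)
The plan is to track the Euler characteristic, the number of connected components, the orientability, and the combinatorial type through the eight steps of the regularization procedure defined in Section~\ref{subsubsection:maps-diagrams.diagrams.regularization}.

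For the left inequality in~(\ref{item:lemma.regularization.1}), I apply Corollary~\ref{corollary.lemma:maps__positive_Euler_characteristic.(3.2)} componentwise: if the $n$ connected components of $\Delta$ have $m_1,\dots,m_n$ contours with $\sum m_i=m$, then $\chi$ of the $i$th is at most $2-m_i$, so $\chi(\Delta)\le 2n-m$, equivalently $-m\ge\chi(\Delta)-2n$. For the right inequality I pass to closures. Using the relation $\chi(\bar C)=\chi(C)+(\text{contours of }C)$ for each nontrivial component $C$ and that each trivial component contributes $1$ to both $\chi$ and $n$, the inequality $\chi(\Delta)-2n\ge\chi(\Delta_0)-2$ becomes
\[
\sum_i\bigl(2-\chi(\bar C_i)\bigr)\le 2-\chi(\bar\Delta_0),
\]
where the sum is over the nontrivial connected components $C_i$ of $\Delta$: the total Euler complexity of the closed surfaces produced by regularization does not exceed the Euler complexity of the closed surface $\bar\Delta_0$.

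I would verify this step by step. Steps~1 and~2 are neutral: step~1 balances the added $0$-face against the added trivial component, and the annular collar added in step~2 has Euler characteristic zero. Step~3 temporarily caps nontrivial contours with outer $2$-faces and is undone at step~8. In step~4, a maximal connected subcomplex $K$ built from $0$-edges and $0$-faces that is not an entire component is a proper connected subcomplex of the closed combinatorial surface resulting from step~3, so by Lemma~\ref{lemma:positive_Euler_characteristic.(3.1)} it has $\chi(K)\le 1$, with equality exactly when $K$ is a singular combinatorial disc; the collapse therefore alters $\chi$ by $1-\chi(K)\ge 0$ and does not increase the total Euler complexity. When $K$ is an entire closed component, Lemma~\ref{lemma:positive_Euler_characteristic.(3.1)} forces it to be a sphere or projective plane, and its removal is consistent with the bookkeeping. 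Step~5 splits each pinched vertex whose link has $d\ge 2$ circle components; this inverts a one-point identification of $d$ closed surfaces, and the identity that a one-point union of $d$ closed surfaces has Euler characteristic equal to the sum of their Euler characteristics minus $d-1$ shows that the operation preserves the total Euler complexity. Steps~6 and~7 (removing an all-$1$-face spherical component and collapsing each $1$-face digon onto an edge) are complexity-non-increasing.

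For part~(\ref{item:lemma.regularization.2}), every step preserves orientability: a coherent orientation of $\bar\Delta_0$ propagates through each step to coherent orientations of the intermediate and final closed surfaces, since no operation glues together opposite orientations or introduces a M\"obius strip. For part~(\ref{item:lemma.regularization.3}), equality forces every step to be complexity-preserving: every collapsed $K$ is a singular disc with $\chi(K)=1$ (hence contractible), every vertex split either leaves its component intact or separates off only spheres, and every removed closed component is a sphere. Together with the hypothesis that $\Psi$ is the unique nontrivial component of positive Euler complexity, this means the chain of operations transforming $\bar\Delta_0$ into $\bar\Psi$ reduces to a sequence of contractible-subcomplex collapses and sphere separations, which can be encoded as a common subdivision and so yields the combinatorial equivalence of $\bar\Psi$ and $\bar\Delta_0$.

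The main obstacle is step~5 and its bookkeeping: a single pinched-vertex split can simultaneously raise $\chi$ and $n$, and only the Euler identity for one-point unions of closed surfaces makes the total Euler complexity balance exactly. A related subtlety is, in step~4, distinguishing between collapsing a non-closed $0$-subcomplex, whose Euler characteristic is controlled by Lemma~\ref{lemma:positive_Euler_characteristic.(3.1)}, and removing an entire closed all-auxiliary component; the two cases require separate arguments. For part~(\ref{item:lemma.regularization.3}), lifting the sequence of complexity-preserving moves to a genuine common combinatorial subdivision of $\bar\Psi$ and $\bar\Delta_0$ is a further technical step, in which at each stage one must exhibit the subdivision data that records how the complexes on the two sides of the move are related.
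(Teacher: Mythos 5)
Your high-level strategy — reformulating $\chi(\Delta)-2n\ge\chi(\Delta_0)-2$ as ``total Euler complexity $\sum_i(2-\chi(\bar C_i))$ of nontrivial components is non-increasing'' and tracking it through the eight steps — is a legitimate repackaging of the paper's own approach, which tracks $\chi$ and the number of components directly. The reformulation itself (using $\chi(\bar C)=\chi(C)+\#\text{contours}(C)$ and cancelling the $m$'s) is correct, and the first inequality via Corollary~\ref{corollary.lemma:maps__positive_Euler_characteristic.(3.2)} is the same as the paper's. However, the step-by-step bookkeeping has a genuine gap at the collapse/split pair, and the Part~3 argument overstates what equality forces.

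\textbf{The gap in steps 4--5.} You bound $\chi(K)\le 1$ via Lemma~\ref{lemma:positive_Euler_characteristic.(3.1)} and claim the collapse ``does not increase the total Euler complexity,'' and then that the split ``preserves'' it. Take Euler complexity in your one-point-union sense, so that splitting a pinched vertex with $d$ link circles is neutral by fiat. Then one computes: if collapsing $K$ turns a component $S$ into a one-point union of surfaces $S_1,\dots,S_d$, then $\sum_j\bigl(2-\chi(S_j)\bigr)=d+\chi(K)-\chi(S)$, so the collapse is complexity-non-increasing iff $\chi(K)\le 2-d$. Your bound $\chi(K)\le 1$ gives this only for $d=1$. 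For $d\ge 2$ you need exactly the sharper bound, and that is why the paper's outline explicitly cites Corollary~\ref{corollary.lemma:maps__positive_Euler_characteristic.(3.2)} here (collapsing increases $\chi$ by \emph{at least} $k-1$), not Lemma~\ref{lemma:positive_Euler_characteristic.(3.1)}. If instead you read Euler complexity literally as $\sum_{\text{nontrivial}}\bigl(2-\chi(C_i)\bigr)$, the issue shows in a different place: a collapse then strictly decreases complexity by $1-\chi(K)$, while a disconnecting split increases it by $d-1$ (your own one-point-union identity gives exactly $+(d-1)$, not $0$), and cancellation again needs $\chi(K)\le 2-d$. Either way, the claim that step~5 alone ``preserves'' complexity does not hold, and the balance across steps~4--5 rests on Corollary~\ref{corollary.lemma:maps__positive_Euler_characteristic.(3.2)}, not on $\chi(K)\le 1$.

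\textbf{The gap in Part~3.} ``Equality forces $\ldots$ every collapsed $K$ to be a singular disc with $\chi(K)=1$'' is not correct. A separating annulus $K$ inside a sphere has $\chi(K)=0$ and $d=2$; collapsing and splitting gives two spheres, and the total Euler complexity is exactly preserved ($0=0$), yet $K$ is not a disc. Equality does not localize to ``each collapsed $K$ is contractible''; what it controls is that the \emph{connected sum} of the closures of the resulting nontrivial components is unchanged, which is what the paper's outline of (\ref{item:lemma.regularization.3}) says. The connected-sum invariant handles the annulus case transparently (sphere\,$\#$\,sphere $=$ sphere) and, combined with the hypothesis that all components other than $\Psi$ have spherical closure, immediately gives that $\bar\Psi$ is combinatorially equivalent to $\bar\Delta_0$. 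Your route through ``contractible collapses plus sphere separations'' does not cover all complexity-preserving steps.

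The remaining pieces (steps~1, 2, 3, 8 being neutral, steps~6, 7 being non-increasing, orientability propagating through every operation) are fine and match what the paper asserts.
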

\begin{proof}[Outline of a proof]
The first inequality of (\ref{item:lemma.regularization.1}) follows from
Corollary \ref{corollary.lemma:maps__positive_Euler_characteristic.(3.2)}.
To prove the second inequality, observe that collapsing
a proper connected subcomplex of a closed connected
combinatorial surface
into a point increases the Euler characteristic by at least $k-1$
if $k$ is the number of disjoint circles in the link of the newly
obtained vertex
(see Corollary \ref{corollary.lemma:maps__positive_Euler_characteristic.(3.2)}),
while the subsequent splitting of the star of this vertex increases
the Euler characteristic by $k-1$ and increases
the number of connected components by at most $k-1$.

It is more or less straightforward to prove
(\ref{item:lemma.regularization.2}).

To prove (\ref{item:lemma.regularization.3}),
observe that if $\chi(\Delta)-2n=\chi(\Delta_0)-2$,
then each operation of collapsing/splitting described above
preserves (up to combinatorial equivalence)
the \emph{connected sum\/} of the closures
of all nontrivial connected components.
(Given a finite set of closed connected combinatorial surfaces, its
\emph{connected sum\/} can be defined up to combinatorial equivalence
similarly to connected sum of topological surfaces.
If $\Psi$ is a connected sum of $\Psi_1,\dotsc,\Psi_k$,
then $\chi(\Psi)-2=\chi(\Psi_1)+\dotsb+\chi(\Psi_k)-2k$.)
\end{proof}

\subsubsection{Diamond move}
\label{subsubsection:maps-diagrams.diagrams.diamond_move}

If $e_1$ and $e_2$ are two oriented edges of an augmented diagram $\Delta$
with a common terminal vertex and identical labels,
then the \emph{diamond move\/}
along $e_1$ and $e_2$ is defined,%
\footnote{Diamond moves in diagrams correspond to \emph{bridge moves\/}
in \emph{pictures}, see \cite{Huebschmann:1981:a2cupW,Rourke:1979:ptg}.}
see \cite[Section 1.4]{CollinsHuebschmann:1982:sdir}
or \cite[Section 3.3.3]{Muranov:2007:fgisgicw}.
There are three kinds of diamond move:
\emph{proper}, \emph{untwisting}, and \emph{disconnecting}.
In the case when none of the oriented edges $e_1$ and $e_2$ is a loop,
all possible diamond moves are shown on Figure~\ref{figure:1}.
\begin{figure}\centering
\includegraphics[scale=1.1]{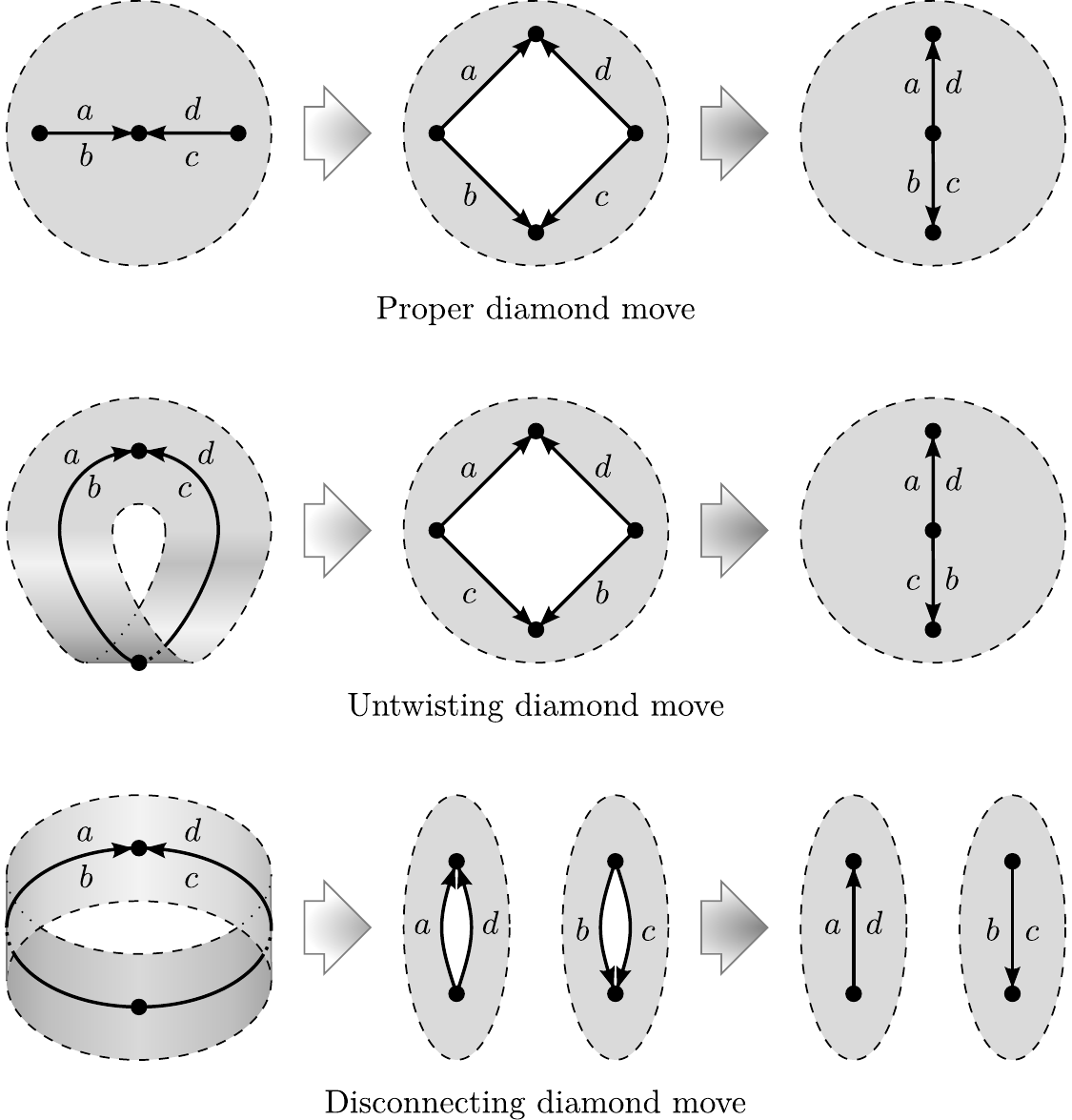}
\caption{Diamond move.}
\label{figure:1}
\end{figure}

In general, the diamond move is defined as follows:
\begin{enumerate}
\item
	let $v$ be the common terminal vertex of $e_1$ and $e_2$,
	fix an orientation locally around $v$, and define
	\emph{left\/} and \emph{right\/} sides of $e_1$ and $e_2$ according to
	how they \emph{enter\/} $v$
	(and not how they \emph{leave\/} $v$, if they are loops);
\item
	cut $\Delta$ along $e_1$ and $e_2$, and glue the right side of $e_1$
	with the left side of $e_2$, and the right side of $e_2$ with
	the left side of $e_1$; label the new $4$ oriented edges naturally
	(with $\ell(e_1)=\ell(e_2)$ and $\ell(e_1^{-1})=\ell(e_2^{-1})$)
\end{enumerate}
(if the component of $\Delta$ that contains $e_1$ and $e_2$ is not closed,
and $e_1$ or $e_2$ is traversed by one of its contours,
then start by temporarily closing that component,
and remove the temporary ``outer'' faces at the end.)

Note that a diamond move does not change the number of edges or the number
of faces, and it does not change the contour labels of faces.
It cannot decrease the number of vertices or increase it
by more than $2$.
Call a diamond move \emph{proper\/} if it does not change the number
of vertices (and in particular preserves the Euler characteristic),
\emph{untwisting\/} if it increases the number of vertices by $1$,
and \emph{disconnecting\/} if it increases the number of vertices by~$2$.

Some useful properties of diamond moves are summarized by the following
lemma.

\begin{lemma}
\label{lemma:diamond_move.(3.4)}
Let\/ $\Delta_0$ be an arbitrary connected diagram
and\/ $\Delta$ be a diagram
obtained from\/ $\Delta_0$ by a diamond move\textup.
Then\/
\begin{enumerate}
\item
	$\Delta$ and\/ $\Delta_0$ have the same number of contours
	and the same contour labels\/\textup;
\item
	$\norm{\Delta_0(0)}\le\norm{\Delta(0)}\le\norm{\Delta_0(0)}+2$\textup,
	$\norm{\Delta(1)}=\norm{\Delta_0(1)}$\textup,
	$\norm{\Delta(2)}=\norm{\Delta_0(2)}$\textup,
	and hence\/ $\chi(\Delta_0)\le\chi(\Delta)\le\chi(\Delta_0)+2$\textup;
\item
	if\/ $\norm{\Delta(0)}=\norm{\Delta_0(0)}$
	\textup(the move is proper\textup)\textup, then\/
	the closures of\/ $\Delta$ and\/ $\Delta_0$ are
	combinatorially equivalent\textup, and the move
	can be\/ \textup{``}undone\/\textup{''}
	by another diamond move\/\textup;
\item
	if\/ $\norm{\Delta(0)}=\norm{\Delta_0(0)}+1$
	\textup(the move is untwisting\textup)\textup, then\/
	$\Delta$ is connected and\/ $\Delta_0$ is non-orientable\/\textup;
\item
	if\/ $\norm{\Delta(0)}=\norm{\Delta_0(0)}+2$
	\textup(the move is disconnecting\textup)\textup, then
	\begin{enumerate}
	\item
		$\Delta$ has has at most\/ $2$ connected components\textup,
	\item
		if\/ $\Delta_0$ is orientable\textup, then
		so is\/~$\Delta$\textup,
	\item
		if\/ $\Delta$ has\/ $2$ connected components
		and\/ $\Delta_0$ is non-orientable\textup, then so is
		\textup(at least one of the components
		of\/\textup)~$\Delta$\textup,
	\item
		if\/ $\Delta$ has\/ $2$ connected components
		and the closure of one of them is spherical\textup, then
		the closure of the other is combinatorially equivalent
		to the closure of\/~$\Delta_0$\textup.
	\end{enumerate}
\end{enumerate}
\end{lemma}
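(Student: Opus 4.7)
The plan is to exploit that a diamond move alters the complex only in a neighborhood of the common terminal vertex $v$ of $e_1$ and $e_2$, while preserving all edge labels, all face contour labels, and the contours of the diagram itself (temporarily closing the affected component if $e_1$ or $e_2$ lies on a contour and removing the outer face at the end). Items~(1) and~(2) then follow by inspection: contours and their labels are preserved by construction; $\norm{\Delta(1)}=\norm{\Delta_0(1)}$ because edges are merely reconnected; $\norm{\Delta(2)}=\norm{\Delta_0(2)}$ because faces are never touched; and only $v$ can be split, into at most three resulting vertices (the two cuts partition the circle of corners at $v$ into at most three arcs), giving the bound $\norm{\Delta_0(0)}\le\norm{\Delta(0)}\le\norm{\Delta_0(0)}+2$. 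Since $\chi=V-E+F$ and only $V$ varies, the Euler-characteristic inequality is immediate.

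For item~(3), the unchanged vertex count forces the cut-and-reglue to be a mere isotopy through the corners at $v$: one constructs an explicit combinatorial equivalence between $\bar\Delta_0$ and $\bar\Delta$ by matching cells on the nose and reinterpreting only the local identification at $v$. The move is invertible because after the swap, the new pair of oriented edges inherited from $e_1,e_2$ still shares a terminal vertex and the same common label, and the diamond move along this new pair manifestly restores the original diagram.

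For items~(4) and~(5), the strategy is to view the move as a local surgery on the closed surface $\bar\Delta_0$: cut along the arcs corresponding to $e_1,e_2$ and reglue their sides with the swapped correspondence at $v$. The classification of closed combinatorial surfaces (Lemma~\ref{lemma:positive_Euler_characteristic.(3.1)} and Corollary~\ref{corollary.lemma:maps__positive_Euler_characteristic.(3.2)}) then controls the outcome. Exactly one new vertex appears precisely when the two sides of $e_1\cup e_2$ form a M\"obius-type neighborhood at $v$ in $\bar\Delta_0$; cutting such a neighborhood unfolds it, so $\Delta_0$ must be non-orientable, and since cutting along a one-sided core never disconnects, $\Delta$ is connected, giving~(4). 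Exactly two new vertices appear when the neighborhood of $v$ locally separates into two pieces; since a single local surgery can disconnect the complex into at most two pieces, $\Delta$ has at most $2$ components, giving~(5a). Cutting an orientable surface produces only orientable pieces, yielding~(5b). For~(5c), a non-orientable closed connected surface cut into two pieces yields at least one non-orientable piece, since non-orientability is witnessed by a one-sided loop that must survive in one of the cut pieces. Finally, for~(5d), if one of the two components has spherical closure, then the corresponding piece was cut out along a disc from $\bar\Delta_0$; filling that disc back into the other piece yields a surface combinatorially equivalent to $\bar\Delta_0$, whence the other component's closure is equivalent to $\bar\Delta_0$.

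The main obstacle is handling the degenerate configurations --- when $e_1$ or $e_2$ is a loop at $v$, or when $e_1,e_2$ share an initial vertex --- since Figure~\ref{figure:1} depicts only the generic case. In each degenerate case the local picture differs, but the trichotomy proper/untwisting/disconnecting and the surface-classification arguments above persist; the necessary enumeration of local models is analogous to that carried out in \cite[Section~1.4]{CollinsHuebschmann:1982:sdir} and \cite[Section~3.3.3]{Muranov:2007:fgisgicw}.
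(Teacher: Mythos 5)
Your approach is a genuinely different route from the paper's. The paper's proof (an ``outline'') reduces to the hardest case---both $e_1,e_2$ loops at $v$---cuts $\Delta_0$ along them to obtain an auxiliary diagram $\Theta$ whose vertex $v$ has been split into four vertices, and then enumerates nine concrete subcases (three possibilities for how the orientation behaves along each loop, times three configurations of the additional contours of $\Theta$, illustrated on Figure~\ref{figure:2}). For each subcase it reads off directly whether the move is proper, untwisting, or disconnecting, and whether the component count can change. You instead argue topologically, by treating the move as a local surgery on $\bar\Delta_0$ and invoking the classification of surfaces. Your route is shorter and more conceptual; the paper's buys a sharper grip on the degenerate configurations because they are enumerated rather than appealed to.

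Two places where your outline is thinner than the paper's and would need shoring up. First, the ``at most three arcs'' justification of $\norm{\Delta(0)}\le\norm{\Delta_0(0)}+2$ is imprecise: when both edges are loops the link of $v$ is cut at four entering-side points, and one must track how the reglueing reassembles the four resulting arcs into at most three circles---exactly what the paper's explicit $\Theta$-bookkeeping does. Second, and more substantively, for~(5b) your argument (``cutting an orientable surface produces only orientable pieces'') presumes the disconnecting move actually disconnects $\Delta$. The paper's subcases (1b) and (3c) show that a disconnecting move (in the sense of adding two vertices) may leave $\Delta$ connected; there the surgery simply lowers genus, and orientability of the result requires a separate observation (the reglueing is orientation-consistent) rather than a ``pieces of a cut'' argument. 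Items~(5c) and~(5d) are unaffected because they are conditioned on $\Delta$ having two components, but~(5b) as you stated it has a hole in this subcase. With those caveats the structure of your argument is sound and parallels the paper's level of rigour, which likewise concludes with ``Details are left to the reader.''
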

\begin{proof}[Outline of a proof]
Let $e_1$ and $e_2$ be the oriented edges such that $\Delta$
is obtained from $\Delta_0$ by the diamond move along $e_1$ and $e_2$.
The most difficult case is the one when both $e_1$ and $e_2$ are loops.
(If $e_1$ is a loop and $e_2$ is not, then the move is proper.)
Only this case shall be considered here.

Let $\Theta$ be the diagram obtained from $\Delta_0$ by cutting it
along the loops $e_1$ and $e_2$, so that the common terminal vertex
of $e_1$ and $e_2$ in $\Delta_0$ be split into $4$ vertices in $\Theta$.
Then, compared to $\Delta_0$,
$\Theta$ has $1$, $2$, or $3$ additional contours,
and the sum of the lengths of these additional contours is $4$.
Let $a$, $b$, $c$, $d$ be the oriented edges of $\Theta$ corresponding
to the left and right sides of $e_1$ and left and right sides of $e_2$,
respectively; they all lie on the additional contours of $\Theta$.
Then $\Delta$ is obtained from $\Theta$ by gluing
$b$ with $c$ and $d$ with~$a$.

Without loss of generality, it is enough to consider the
following $3$ cases and their $9$ subcases (see Figure \ref{figure:2}):
\begin{figure}\centering
\includegraphics[scale=1.1]{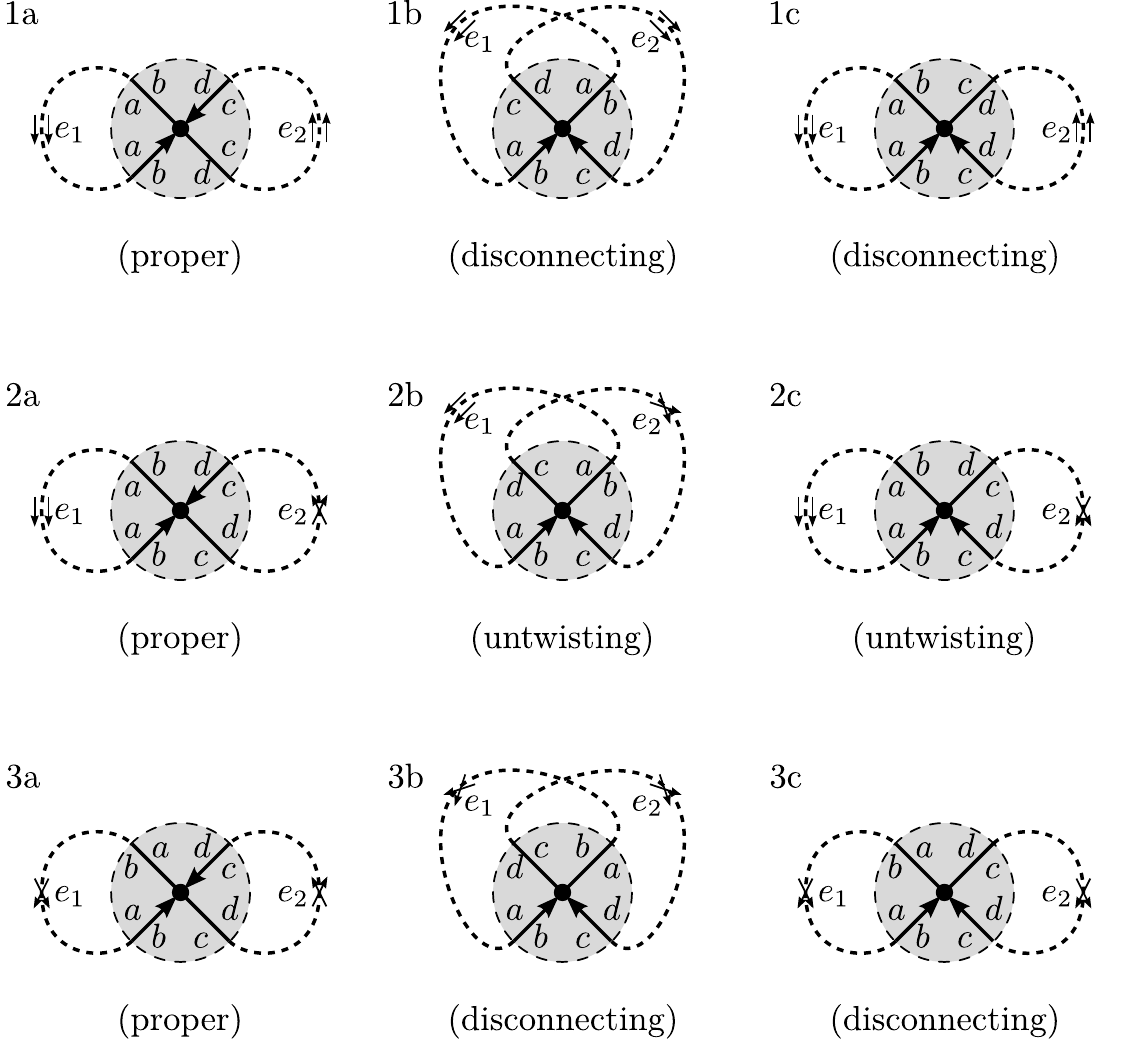}
\caption{Oriented loops $e_1$ and $e_2$.}
\label{figure:2}
\end{figure}
\begin{enumerate}
\item
	the orientation is preserved along both $e_1$ and $e_2$,
	and hence
	\begin{enumerate}
	\item
		either $\Theta$ has $3$ additional contours $a,bd,c$,
		and the move is proper, or
	\item
		$\Theta$ has $1$ additional contour $acb^{-1}d^{-1}$,
		and the move is disconnecting
		but cannot change the number of connected components, or
	\item
		$\Theta$ has $3$ additional contours $a,bc^{-1},d$,
		and the move is disconnecting;
	\end{enumerate}
\item
	the orientation is preserved along $e_1$ and inverted along $e_2$,
	and hence
	\begin{enumerate}
	\item
		either $\Theta$ has $2$ additional contours $a,bcd$,
		and the move is proper, or
	\item
		$\Theta$ has $1$ additional contour $adbc^{-1}$,
		and the move is untwisting, or
	\item
		$\Theta$ has $2$ additional contours $a,bc^{-1}d^{-1}$,
		and the move is untwisting;
	\end{enumerate}
\item
	the orientation is inverted along both $e_1$ and $e_2$,
	and hence
	\begin{enumerate}
	\item
		either $\Theta$ has $1$ additional contour $abcd$,
		and the move is proper, or
	\item
		$\Theta$ has $2$ additional contours $ad,bc^{-1}$,
		and the move is disconnecting, or
	\item
		$\Theta$ has $1$ additional contour $abc^{-1}d^{-1}$,
		and the move is disconnecting
		but cannot change the number of connected components.
	\end{enumerate}
\end{enumerate}

Details are left to the reader.
\end{proof}

\subsection{Reduced diagrams}
\label{subsection:maps-diagrams.reduced_diagrams}

A pair of distinct faces $\{\Pi_1,\Pi_2\}$ in a diagram
over \tGP{X}{R} shall be called
\emph{immediately strictly cancelable\/} if
the labels of $\Pi_1$ and $\Pi_2$ are identical as indexed members of $R$,
both faces are incident to some common edge, and
their contours
can be written as $\cntr{\Pi_1}=p_1sq_1$ and $\cntr{\Pi_2}=p_2sq_2$
with a common nontrivial subpath $s$, $\abs{s}>0$,
so that $\abs{p_1}=\abs{p_2}$ (and $\abs{q_1}=\abs{q_2}$).
A diagram shall be called
\emph{weakly strictly reduced\/} if
it does not have immediately strictly cancelable pairs of faces.

A pair of distinct faces $\{\Pi_1,\Pi_2\}$ in a diagram
shall be called \emph{diagrammatically cancelable\/} if
there exists a sequence of diamond moves that separates
these two faces into a spherical subdiagram
(\ie\ leads to a diagram in which the faces corresponding to
$\Pi_1$ and $\Pi_2$ form a spherical connected component).
A diagram shall be called
\emph{diagrammatically reduced\/}
if it does not have diagrammatically cancelable pairs of faces.

Clearly, an immediately strictly cancelable pair is
diagrammatically cancelable,
and hence a diagrammatically reduced diagram is weakly strictly reduced.%
\footnote{There exist different non-equivalent definitions of
cancelable pairs, reduced diagrams, etc.
The modifiers such as ``weakly,'' ``strictly,''
``diagrammatically'' are added here to distinguish among them.
For example, diagrams reduced in the sense of \cite{LyndonSchupp:2001:cgt}
would be called \emph{weakly diagrammatically reduced\/} here.}

A diagram shall be called
\emph{diamond-move reduced\/}
if no sequence of diamond moves can increase its Euler characteristic.

By Lemma \ref{lemma:diamond_move.(3.4)}, every diagram can be transformed
into a diamond-move reduced one by a sequence of diamond moves,
and if a diagram is diamond-move reduced, then any sequence of diamond
moves preserves its closure up to combinatorial equivalence
(and in particular it preserves its Euler characteristic and
orientability).

A diamond-move reduced diagram is diagrammatically reduced
if and only if does not have spherical components with exactly $2$ faces.

\begin{lemma}
\label{lemma:diagram_reduction.(3.5)}
Let\/ $\Delta_0$ be an augmented diagram over\/ \tGP{X}{R}\textup.
Let\/ $\Delta_1$ be the regularization of\/ $\Delta_0$\textup,
and $\Delta$ be a diamond-move reduced diagram obtained from\/ $\Delta_1$
by some sequence of diamond moves\textup.
If\/ $\Delta$ is not closed\textup, then let $\Delta'$
be the diagram obtained from\/ $\Delta$ by discarding
all closed connected components\textup.
Then
\begin{enumerate}
\item
\label{item:lemma.diagram_reduction.1}
	$\Delta$ has the same number of contours as\/ $\Delta_0$\textup,
	and contour labels of\/ $\Delta$ are obtained from
	the corresponding contour labels of\/ $\Delta_0$
	by canceling the\/ $1$ symbols\/\textup;
\item
\label{item:lemma.diagram_reduction.2}
	each component of\/ $\Delta$ either is diagrammatically reduced
	or is a spherical diagram with\/ $2$ faces\/\textup;
\item
\label{item:lemma.diagram_reduction.3}
	if\/ $\Delta_0$ is orientable\textup,
	then so is\/ $\Delta$\textup;
	moreover\textup,
	any orientation of\/ $\Delta_0$
	naturally induces an orientation of\/ $\Delta$ so that
	positive contours which do not become trivial
	stay positive\textup,
	and negative which do not become trivial
	stay negative\/\textup;
\item
\label{item:lemma.diagram_reduction.4}
	if the closures of all nontrivial connected components of\/ $\Delta_0$
	are spherical\textup,
	then so are the closures of all nontrivial components
	of\/~$\Delta$\textup;
\item
\label{item:lemma.diagram_reduction.5}
	if\/ $\Delta_0$ is disc\textup,
	then\/ $\Delta'$ is disc\/\textup;
\item
\label{item:lemma.diagram_reduction.6}
	if\/ $\Delta_0$ is annular\textup,
	then\/ $\Delta'$ either is annular or consists of\/ $2$
	disc components\/\textup;
\item
\label{item:lemma.diagram_reduction.7}
	if\/ $\Delta_0$ is a connected one-contour diagram\textup,
	then\/ $\chi(\Delta')\ge\chi(\Delta_0)$\textup,
	and if additionally\/ $\chi(\Delta')=\chi(\Delta_0)$\textup,
	then the closures of\/ $\Delta'$ and\/ $\Delta_0$
	are combinatorially equivalent\textup,
	and all closed components of\/ $\Delta$
	are spherical\textup.
\end{enumerate}
\end{lemma}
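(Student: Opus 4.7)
The plan is to split the statement into the regularization step $\Delta_0\to\Delta_1$, for which Lemma~\ref{lemma:regularization.(3.3)} supplies all needed information, and the sequence of diamond moves $\Delta_1\to\Delta$, to be handled by iterating Lemma~\ref{lemma:diamond_move.(3.4)}. Items~(\ref{item:lemma.diagram_reduction.1})--(\ref{item:lemma.diagram_reduction.4}) should then drop out as formal consequences. For the labels in~(\ref{item:lemma.diagram_reduction.1}) I would simply note that regularization deletes only the symbol $1$ from contour labels and that diamond moves preserve contour labels verbatim. For~(\ref{item:lemma.diagram_reduction.2}) I would invoke the observation made immediately before the lemma, since $\Delta$ is diamond-move reduced. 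For~(\ref{item:lemma.diagram_reduction.3}) I would observe that, by Lemma~\ref{lemma:diamond_move.(3.4)}, an untwisting diamond move requires a non-orientable source, so starting from the orientable $\Delta_1$ provided by Lemma~\ref{lemma:regularization.(3.3)}(\ref{item:lemma.regularization.2}) only proper and disconnecting moves ever occur, both of which preserve orientability and transport chosen contour orientations in the natural way. For~(\ref{item:lemma.diagram_reduction.4}) I would track closures of nontrivial components through each step, using the equality clauses of both lemmas to keep every nontrivial component spherical.

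The crux is~(\ref{item:lemma.diagram_reduction.7}), on which I expect to spend most of the effort. Let $\Psi_1$ be the (unique) component of the one-contour diagram $\Delta_1$ carrying the contour of $\Delta_0$, and let $C_1,\dots,C_{n_1-1}$ be its remaining components, which are necessarily closed surfaces. Lemma~\ref{lemma:regularization.(3.3)}(\ref{item:lemma.regularization.1}) gives $\chi(\Delta_1)-2n_1\ge\chi(\Delta_0)-2$, and since each $\chi(C_i)\le 2$ by Lemma~\ref{lemma:positive_Euler_characteristic.(3.1)}, subtracting the closed components yields $\chi(\Psi_1)\ge\chi(\Delta_0)$. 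Then I would follow the unique contour through the diamond-move sequence: it always lives in a single component, call it $\Psi$, because a contour is a single closed path. Proper moves within $\Psi$ leave $\chi(\Psi)$ fixed; a disconnecting move within $\Psi$ splits off a new closed component of Euler characteristic at most $2$ while increasing the total Euler characteristic by~$2$, so $\chi(\Psi)$ cannot decrease; untwisting moves within $\Psi$ can only raise $\chi(\Psi)$; and moves inside closed components do not touch $\Psi$. Since $\Delta'=\Psi$ at termination, this yields $\chi(\Delta')\ge\chi(\Psi_1)\ge\chi(\Delta_0)$.

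For the equality case of~(\ref{item:lemma.diagram_reduction.7}), equality all the way through forces every $C_i$ to be a sphere, every disconnecting diamond move to spawn a sphere (equality clause of Lemma~\ref{lemma:diamond_move.(3.4)}, item~(5d)), no untwisting to occur, and the closures of $\Psi_1$ and $\Delta_0$ to be combinatorially equivalent by the equality clause of Lemma~\ref{lemma:regularization.(3.3)}(\ref{item:lemma.regularization.3}); chaining these yields the closure of $\Delta'$ combinatorially equivalent to that of $\Delta_0$ and every closed component of $\Delta$ spherical. Items~(\ref{item:lemma.diagram_reduction.5}) and~(\ref{item:lemma.diagram_reduction.6}) then follow by feeding $\chi(\Delta_0)=1$, respectively $\chi(\Delta_0)=0$, into the bound, using the upper estimate $\chi\le 2-m$ from Corollary~\ref{corollary.lemma:maps__positive_Euler_characteristic.(3.2)} and, for the annular case, noting that a disconnecting diamond move may split an annular component into two discs while keeping the total Euler characteristic under control. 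The main obstacle I foresee is the combinatorial bookkeeping in~(\ref{item:lemma.diagram_reduction.7}): tracking the contour cleanly through the diamond-move sequence and simultaneously pinning down the combinatorial equivalence of closures in the equality case by stitching together the equality clauses of Lemmas~\ref{lemma:regularization.(3.3)} and~\ref{lemma:diamond_move.(3.4)}.
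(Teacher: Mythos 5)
The paper deliberately gives no proof, stating only that the lemma ``follows more or less directly'' from Corollary~\ref{corollary.lemma:maps__positive_Euler_characteristic.(3.2)} and Lemmas~\ref{lemma:regularization.(3.3)}, \ref{lemma:diamond_move.(3.4)}; your plan fills in exactly that deduction along the intended lines (split $\Delta_0\to\Delta_1\to\Delta$, let the regularization lemma handle the first arrow and iterate the diamond-move lemma over the second, track Euler characteristics and closures, and pin the equality case down by chaining the two equality clauses). Your accounting for item~(\ref{item:lemma.diagram_reduction.7}) is correct: $\chi(\Psi_1)\ge\chi(\Delta_0)$ follows from $\chi(\Delta_1)-2n_1\ge\chi(\Delta_0)-2$ plus $\chi(C_i)\le 2$, the contour lives in a single component $\Psi$ throughout, disconnecting moves on $\Psi$ split off a closed piece of Euler characteristic at most $2$ while adding $2$ to the total, so $\chi(\Psi)$ is nondecreasing, and $\Delta'=\Psi$ since a one-contour diagram has exactly one non-closed component. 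One small caveat: for item~(\ref{item:lemma.diagram_reduction.4}), the phrase ``using the equality clauses'' undersells what is actually happening; spherical closure is not an equality condition being propagated but a maximality-of-Euler-characteristic condition (every step is nondecreasing on Euler characteristics, and spheres already sit at the ceiling $\chi=2$), so the argument is cleaner if phrased that way rather than via the equality clauses. This is a matter of exposition, not a gap.
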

No proof shall be given as this lemma
follows more or less directly from
Corollary \ref{corollary.lemma:maps__positive_Euler_characteristic.(3.2)}
and
Lemmas \ref{lemma:regularization.(3.3)}, \ref{lemma:diamond_move.(3.4)}.

One of basic results of combinatorial group theory is the fact that
a group word $w$ represents the trivial element in the group presented
by \tGP{X}{R} if and only if there exists a disc diagram over \tGP{X}{R}
with contour label $w$.
The following lemma summarizes several results of this kind
that shall be used in the proof of the main theorem.

\begin{lemma}
\label{lemma:diagrams.(3.6)}
Let\/ $G$ be the group presented by\/ \tGP{X}{R}\textup.
Let\/ $w$\textup, $w_1$\textup, and\/ $w_2$ be arbitrary group words
over\/ $X$\textup, and\/ $n$ a positive integer\textup.
Then
\begin{enumerate}
\item
\label{item:lemma.diagrams.1}
	if there exists an
	augmented disc diagram over\/ \tGP{X}{R}
	with contour label\/ $w$\textup, then\/ $[w]=1$
	in\/~$G$\textup;
\item
\label{item:lemma.diagrams.2}
	if\/ $[w]=1$ in\/ $G$\textup, then there exists a diamond-move reduced
	disc diagram over\/ \tGP{X}{R}
	with contour label\/~$w$\textup;
\item
\label{item:lemma.diagrams.3}
	if there exists an
	augmented oriented annular diagram over\/ \tGP{X}{R}
	whose contours agree with the orientation and whose
	contour labels are\/ $w_1$ and\/ $w_2^{-1}$\textup, then\/
	$[w_1]$ and\/ $[w_2]$ are conjugate in\/~$G$\textup;
\item
\label{item:lemma.diagrams.4}
	if\/ $[w_1]$ and\/ $[w_2]$ are conjugate in\/ $G$
	and\/ $[w_1]\ne1$\textup, then there exists an oriented
	diamond-move reduced annular diagram over\/ \tGP{X}{R}
	whose contours agree with the orientation and whose
	contour labels are\/ $w_1$ and\/~$w_2^{-1}$\textup;
\item
\label{item:lemma.diagrams.5}
	if there exists an
	augmented one-contour diagram over\/ \tGP{X}{R}
	with contour label\/ $w$ and whose closure
	is a combinatorial sphere with\/
	$n$ handles\textup,
	then\/
	$[w]\in[G,G]$ and\/ $\cl_G([w])\le n$\textup,
	$\sql_G([w])\le 2n+1$\textup;
\item
\label{item:lemma.diagrams.6}
	if\/ $\cl_G([w])=n>0$\textup, then there exists a
	diamond-move reduced one-contour diagram over\/ \tGP{X}{R}
	with contour label\/ $w$ and whose closure
	is a combinatorial sphere with\/ $n$ handles\/\textup;
\item
\label{item:lemma.diagrams.7}
	if there exists an
	augmented connected one-contour diagram over\/ \tGP{X}{R}
	with contour label\/ $w$ and whose closure
	is a non-orientable combinatorial surface of
	Euler characteristic\/ $2-n$\textup,
	then\/ $[w]\in G^2$ and\/ $\sql_G([w])\le n$\textup;
\item
\label{item:lemma.diagrams.8}
	if\/ $\sql_G([w])=n>0$\textup,
	then there exists a
	diamond-move reduced connected one-contour diagram over\/ \tGP{X}{R}
	with contour label\/ $w$ and whose closure
	is either a non-orientable combinatorial surface of
	Euler characteristic\/ $2-n$ or
	an orientable combinatorial surface of
	Euler characteristic\/ $3-n$
	\textup(\ie\ a combinatorial sphere with\/
	$(n-1)/2$ handles\/\textup)\textup.
\end{enumerate}
\end{lemma}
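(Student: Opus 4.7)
The eight items split naturally into four pairs, each pair consisting of a ``forward'' direction (diagram $\Rightarrow$ algebra; items (1), (3), (5), (7)) and a ``backward'' direction (algebra $\Rightarrow$ reduced diagram; items (2), (4), (6), (8)). For all four forward directions the plan is the same: first invoke the regularization part of Lemma \ref{lemma:diagram_reduction.(3.5)} to pass from the augmented diagram to a regular one; since regularization alters each contour label only by canceling $1$-symbols and alters the closure only up to adjoining closed spherical components, both the represented group element and the topological data in the hypothesis survive the passage. For a regular one-contour diagram $\Delta$ whose closure $\bar\Delta$ is a closed surface $\Sigma$, I would then run the standard van Kampen reading: choose a base vertex $v_0$ on the contour, a spanning tree of $\Delta^{(1)}$, and read the labeling morphism $\Delta\to K(X;R)$ in $F(X)$. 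The contour spells $w$, and modulo the contributions $u_\Pi\, \ell(\Pi)\, u_\Pi^{-1}$ of the $2$-faces, $w$ becomes freely equal to the standard polygonal word of $\Sigma$ with one disc removed: empty for a sphere (hence $[w]=1$ in $G$, proving (1)); of the form $w_2^{-1}\cdot c\cdot w_1\cdot c^{-1}$ for an annulus cut along an arc (proving (3)); $\prod_{i=1}^{n}[x_i,y_i]$ for a sphere with $n$ handles (proving $\cl_G([w])\le n$ in (5)); and $\prod_{i=1}^{n}x_i^2$ for a non-orientable surface with $n$ crosscaps (proving (7)). The bound $\sql_G([w])\le 2n+1$ in (5) then follows from the Culler identity rewriting $n$ commutators as $2n+1$ squares (Section 2.5 of \cite{Culler:1981:ussefg}).

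For the four backward directions the plan is the standard van Kampen construction in reverse. Starting from an algebraic expression in $F(X)$ of the required shape — respectively $w=\prod_j u_jr_ju_j^{-1}$ for (2); $w_1=c\cdot w_2\cdot c^{-1}\cdot\prod_j u_jr_ju_j^{-1}$ for (4); $w=\prod_{i=1}^{n}[x_i,y_i]\cdot\prod_j u_jr_ju_j^{-1}$ with $n=\cl_G([w])$ for (6); and $w=\prod_{i=1}^{n}x_i^{2}\cdot\prod_j u_jr_ju_j^{-1}$ with $n=\sql_G([w])$ for (8) — I would assemble an augmented one-contour diagram by taking the appropriate polygonal ``skeleton'' (disc, annulus, polygon with sides identified to form a sphere-with-handles or a non-orientable surface minus a disc) and attaching for each conjugate $u_jr_ju_j^{-1}$ a $2$-face labeled by the indexed relator $r_j$ along a path labeled by $u_j$. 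Regularizing via Lemma \ref{lemma:diagram_reduction.(3.5)} and then repeatedly applying diamond moves in the direction of increasing Euler characteristic (allowed by Lemma \ref{lemma:diamond_move.(3.4)}) yields a diamond-move reduced diagram. Orientability and closure topology are tracked by Lemma \ref{lemma:diagram_reduction.(3.5)}(3)--(4) and Lemma \ref{lemma:diamond_move.(3.4)}.

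The main technical subtlety lies in (8). A diamond-move reduction applied to a non-orientable one-contour diagram of Euler characteristic $2-n$ can, via a disconnecting move, split off a closed spherical component and leave a one-contour component whose closure is orientable; Lemma \ref{lemma:diamond_move.(3.4)}(5)(d) guarantees that such splitting preserves the closure of the non-spherical side only up to combinatorial equivalence, so after removing the spherical piece via Lemma \ref{lemma:diagram_reduction.(3.5)} the surviving non-orientable closure would have Euler characteristic $>2-n$, contradicting minimality of $n=\sql_G([w])$ via the already-established item (7). The remaining possibility is that the surviving component has orientable closure, whose Euler characteristic is then forced to be $3-n$ (hence $n$ must be odd): any larger value would again contradict $\sql_G([w])=n$ through the Culler estimate applied to item (5), and any smaller value is ruled out because diamond moves cannot decrease Euler characteristic. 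This dichotomy is precisely the ``either/or'' in the statement of (8); the parts (2), (4), (6) avoid this subtlety because in those cases the declared closure type (disc, annulus, sphere-with-handles) is itself invariant under diamond-move reduction up to discarding spherical components, by Lemma \ref{lemma:diagram_reduction.(3.5)}(5)--(7).
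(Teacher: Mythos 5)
Your overall plan matches the paper's proof: regularize (Lemma \ref{lemma:diagram_reduction.(3.5)}), read the contour as the surface word in the fundamental group of the map for the forward items (with Culler's $2n+1$-square identity giving the extra bound in (5)), and for the backward items assemble an augmented diagram from the algebraic expression, regularize and diamond-reduce, then constrain the closure of the surviving one-contour component by contradiction via the already-proved (5) and (7).

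The weak spot is your ``subtlety'' paragraph for (8). The scenario you describe --- a disconnecting move splitting off a closed spherical component and leaving a one-contour component of \emph{orientable} closure --- cannot happen: by Lemma \ref{lemma:diamond_move.(3.4)}(5)(c), if $\Delta_0$ is non-orientable then at least one of the two resulting components is non-orientable, and a sphere is orientable, so the non-spherical piece must be the non-orientable one. Moreover, the part (5)(d) that you cite says the surviving closure is combinatorially equivalent to the closure of $\Delta_0$, hence still non-orientable of Euler characteristic $2-n$ --- the ``good'' case --- not of Euler characteristic strictly greater than $2-n$ as you write. An orientable outcome can only arise from untwisting moves, or from disconnecting moves whose discarded closed component is itself non-orientable and non-spherical. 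The paper bypasses this bookkeeping by applying Lemma \ref{lemma:diagram_reduction.(3.5)}(7) directly: either $\chi(\Delta')=\chi(\Delta_0)=1-n$ and the closures are combinatorially equivalent, or $\chi(\Delta')\ge 2-n$, in which case non-orientability of $\Delta'$ is ruled out by (7) and $\chi(\Delta')\ge 3-n$ is ruled out by (5), leaving the orientable $\chi(\Delta')=2-n$ case. Your contradiction arguments are exactly the right ones; the account of how the orientable case arises is what needs repair.
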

\begin{proof}[Outline of a proof]
Most of these statements are usually considered well-know.
Statements analogous or equivalent to
(\ref{item:lemma.diagrams.1}),
(\ref{item:lemma.diagrams.2}),
(\ref{item:lemma.diagrams.3}),
(\ref{item:lemma.diagrams.4}) may be found, for example, in
Theorem V.1.1 and Lemmas V.1.2, V.5.1, V.5.2 in
\cite[Chapter V]{LyndonSchupp:2001:cgt} and in
Lemmas 11.1, 11.2 in
\cite[Chapter 4]{Olshanskii:1989:gosg-rus,Olshanskii:1991:gdrg-eng}.
A care should be taken, however, since definitions of diagrams
are not exactly equivalent everywhere, and because in the latter reference
all results are stated only for augmented diagrams.
Here only outlines of proofs of
(\ref{item:lemma.diagrams.5}),
(\ref{item:lemma.diagrams.7}),
(\ref{item:lemma.diagrams.8})
shall be given.
Statement
(\ref{item:lemma.diagrams.6})
can be proved similarly to (\ref{item:lemma.diagrams.8}).

To prove (\ref{item:lemma.diagrams.5}),
it suffices to notice that if $\Delta$ is
a one-contour map whose closure is a combinatorial
sphere with $n$ handles,
then $\cntr{}\Delta$ is the product of $n$ commutators
in the fundamental group of $\Delta$,
and that every product of $n$ commutators is also the product of
$2n+1$ squares, see \cite[Section 2.5]{Culler:1981:ussefg}.
Similarly, to prove (\ref{item:lemma.diagrams.7}),
it suffices to notice that if $\Delta$ is
a one-contour map whose closure is a non-orientable combinatorial surface
of Euler characteristic $2-n$,
then $\cntr{}\Delta$ is the product of $n$ squares
in the fundamental group of~$\Delta$.

Here follows an outline of a proof of
(\ref{item:lemma.diagrams.8}).

Assume $\sql_G([w])=n>0$.
Then there exists an augmented \emph{simple\/} disc diagram $\Phi$
over \tGP{X}{R} with the contour label of the form
$v_1^2v_2^2\dotsm v_n^2w^{-1}$.
After gluing together accordingly parts of the contour corresponding
to distinct occurrences of each $v_i$, obtain
an augmented connected one-contour diagram $\Delta_0$ over \tGP{X}{R}
with contour label $w$ and whose
closure is a non-orientable combinatorial surface of
Euler characteristic $2-n$ (and $\chi(\Delta_0)=1-n$).
Consider a diamond-move reduced diagram obtained from
the regularization of $\Delta_0$
by some sequence of diamond moves, and let
$\Delta$ be its non-closed connected component
(\ie\ the component containing its contour).
Then, by Lemma \ref{lemma:diagram_reduction.(3.5)},
$\ell(\cntr\Delta)=\ell(\cntr\Delta_0)=w$,
and either the closure of $\Delta$ is combinatorially equivalent
to $\Delta_0$, in which case $\Delta$ is a desired diagram,
or $\chi(\Delta)\ge\chi(\Delta_0)+1=2-n$.
It is left to show that in the second case $\Delta$ is orientable
and $\chi(\Delta)=2-n$.
Indeed, if $\chi(\Delta)\ge 2-n$ and $\Delta$ is non-orientable,
then $\sql_G([w])\le n-1$ by (\ref{item:lemma.diagrams.7}).
If $\Delta$ is orientable
and $\chi(\Delta)\ge 3-n$, then
$\sql_G([w])\le n-1$ again by (\ref{item:lemma.diagrams.5}).
Thus (\ref{item:lemma.diagrams.8}) is proved.
\end{proof}

\begin{corollary.lemma}
\label{corollary.lemma:diagrams.(3.7)}
Let\/ $G$ be the group presented by\/ \tGP{X}{R}
and\/ $w$ a group word over\/ $X$\textup.
Then
\begin{enumerate}
\item
\label{item:corollary.lemma.diagrams.1}
	$[w]\in G^2$ if and only if there exists a
	one-contour diagram over\/ \tGP{X}{R} with contour label\/~$w$\textup;
\item
\label{item:corollary.lemma.diagrams.2}
	$[w]\in[G,G]$ if and only if there exists an orientable
	one-contour diagram over\/ \tGP{X}{R}
	with contour label\/~$w$\textup.
\end{enumerate}
\end{corollary.lemma}

\subsection{Asphericity}
\label{subsection:maps-diagrams.asphericity}

There exist different non-equivalent definitions of \emph{asphericity\/}
of a group presentation in the literature.
Group presentations themselves may also be viewed differently,
notable variations including the following:
whether or not the relators are required to be reduced,
or even cyclically reduced, and
whether or not the relators form a set or an indexed family.
Different notions of a group presentation necessitate different
definitions of asphericity.
In this paper, relators are allowed to be non-reduced, and even
freely trivial as long as they are not trivial (\ie\ not of length $0$),
and they may form an indexed family rather than just a set.
Consistent definitions of different kinds of asphericity
of a group presentation shall be borrowed from
\cite{ChiswellColHue:1981:agp,CollinsHuebschmann:1982:sdir}.
Only \emph{diagrammatic\/} and \emph{singular\/} asphericities
shall be used in this paper.

A group presentation is
\emph{diagrammatically aspherical\/} if
there exist no diagrammatically reduced spherical diagrams over it
(see \cite{CollinsHuebschmann:1982:sdir}).
Equivalently, a group presentation is diagrammatically aspherical
if and only if
every spherical diagram over this presentation can be transformed
by a sequence of diamond moves into a diagram whose
all connected components
(which are necessarily spherical by Lemma \ref{lemma:diamond_move.(3.4)})
have exactly $2$ faces.

Observe that a diagrammatically aspherical presentation cannot
have freely trivial relators because otherwise
there would exist one-face spherical diagrams over it.
A group presentation \tGP{X}{R} is
\emph{singularly aspherical\/} if
it is diagrammatically aspherical,
no element of $R$ represents a proper power in
(the free group presented by)
\tGP{X}{\varnothing},
and no two distinct indexed members of $R$
are conjugate or conjugate to each other's inverses
in~\tGP{X}{\varnothing}.

A group shall be called \emph{diagrammatically\/} or
\emph{singularly aspherical\/}, accordingly,
if it has a presentation which is such.

It is considered well-known that
``aspherical in a certain sense groups
are of cohomological and geometric
dimension at most $2$, and hence are torsion-free.''
The following lemma states this result precisely using
the chosen terminology.

\begin{lemma}
\label{lemma:singularly_aspherical_groups.(3.8)}
Every nontrivial singularly aspherical group either is free\/
\textup(hence of cohomological and geometric dimension\/ $1$\textup)
or has cohomological and geometric dimension\/ $2$\textup;
in both cases it is torsion-free\textup.
\end{lemma}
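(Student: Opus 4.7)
The plan is to show that a singularly aspherical presentation $\langle X \mid R \rangle$ of $G$ makes the presentation $2$-complex $K = K(X;R)$ into a two-dimensional Eilenberg--MacLane space $K(G,1)$; the other conclusions will then follow from classical results in cohomological group theory. By van Kampen's theorem the fundamental group of $K$ is $G$. Since $K$ is at most $2$-dimensional, its universal cover $\tilde K$ is simply connected and at most $2$-dimensional, so by Hurewicz $\pi_2(K) = \pi_2(\tilde K) = H_2(\tilde K)$; vanishing of $\pi_2(K)$ therefore makes $\tilde K$ contractible and $K$ aspherical.

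To establish $\pi_2(K) = 0$, I would argue combinatorially. Each element of $\pi_2(K)$ is represented by a cellular map $S^2 \to K$ that can be encoded as a spherical diagram over $\langle X \mid R \rangle$. By diagrammatic asphericity and Lemma \ref{lemma:diamond_move.(3.4)}, such a diagram may be transformed through a sequence of diamond moves---each corresponding to a local homotopy of the map $S^2 \to K$ and so preserving the class in $\pi_2$---into one whose connected components each consist of exactly two faces sharing their common boundary. The remaining two conditions of singular asphericity, namely that no element of $R$ is a proper power in the free group on $X$ and that no two distinct indexed members of $R$ are conjugate or inverse-conjugate in that free group, ensure that every such two-face spherical component is a \emph{cancelling pair}: it is null-homotopic in $K$ and hence represents the trivial element of $\pi_2(K)$. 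This is the content of the main theorems of \cite{ChiswellColHue:1981:agp} and \cite{CollinsHuebschmann:1982:sdir}, which I would invoke rather than reprove.

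Once $K$ is a $2$-dimensional $K(G,1)$, the cellular chain complex of $\tilde K$ is a free $\mathbb Z[G]$-resolution of $\mathbb Z$ of length at most $2$, yielding $\cd G \le 2$, and $K$ itself witnesses $\geomdim G \le 2$. The dichotomy in the statement then follows from the Stallings--Swan theorem: a group has cohomological dimension at most $1$ over $\mathbb Z$ if and only if it is free. Thus, assuming $G$ is non-trivial, either $G$ is free, in which case $\cd G = \geomdim G = 1$, or $\cd G = 2$, in which case $\geomdim G = 2$ as well since a two-dimensional $K(G,1)$ has already been exhibited. Torsion-freeness follows from finite cohomological dimension: a non-trivial element $g \in G$ of finite order $n$ would generate a subgroup $\langle g \rangle \cong \mathbb Z/n\mathbb Z$ with $H^k(\langle g \rangle,\mathbb Z) \ne 0$ for infinitely many $k$, contradicting monotonicity of cohomological dimension under passage to subgroups.

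The main obstacle is the vanishing of $\pi_2(K)$, whose derivation from singular asphericity demands the precise dictionary between spherical diagrams, pictures, identities among relators, and elements of $\pi_2$ that is developed in the cited papers. Both extra hypotheses (no proper powers and no conjugate/inverse-conjugate pairs) are essential: without them one obtains extraneous two-face spherical diagrams representing non-trivial classes in $\pi_2(K)$. The remainder of the argument is a short assembly of classical facts about cohomological and geometric dimension, so all the real work sits in Step~1.
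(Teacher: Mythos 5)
Your proof is essentially the same as the paper's: both reduce the claim to showing that $K(X;R)$ is a two-dimensional $K(G,1)$, use diamond moves and diagrammatic asphericity to collapse spherical diagrams to disjoint unions of two-face spheres, invoke the no-proper-power and no-conjugate-pair conditions to dispatch those, and finish with Stallings--Swan and finite cohomological dimension. The only cosmetic difference is that you phrase the key step in terms of $\pi_2(K)$ and null-homotopy of cancelling pairs (outsourcing the diagram-to-$\pi_2$ dictionary to \cite{ChiswellColHue:1981:agp,CollinsHuebschmann:1982:sdir}), whereas the paper works directly with $H_2(\tilde K(X;R))$ and observes that a two-face oriented spherical diagram maps both faces to the same $2$-cell of $\tilde K$ with opposite orientations, hence contributes $0$ to the chain group; the latter avoids having to track $\pi_2$ classes through disconnecting diamond moves.
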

\begin{proof}[Outline of a proof]
Let $G$ be an arbitrary
nontrivial non-free
singularly aspherical group,
and let \tGP{X}{R} be its singularly aspherical presentation.
The goal is to show that in this case $K(X;R)$ is an
\emph{Eilenberg-MacLane complex\/} of type $(G,1)$,
which is equivalent to showing that $H_2(\tilde K(X;R))=0$,
where $\tilde K(X;R)$ is the universal cover of $K(X;R)$.

Every $2$-cycle in the group of $2$-dimensional cellular chains
of $\tilde K(X;R)$ can be represented by
a morphism to $\tilde K(X;R)$ of some (generally not connected)
closed oriented combinatorial surface.
Such a surface is naturally a closed diagram over \tGP{X}{R},
the structure of a diagram is induced by the morphism.
Since $\tilde K(X;R)$ is simply connected, every such morphism
of a closed oriented diagram can be transformed without changing
the represented homology class to
a morphism of an oriented diagram whose all connected components are
spherical
(here it may be helpful to pass to augmented diagrams first
and then to use Lemma \ref{lemma:regularization.(3.3)}).
Because applying a diamond move to the diagram and changing accordingly
the morphism does not change the represented homology class, and
because the presentation is diagrammatically aspherical,
the morphism and the diagram can be transformed by diamond
moves to a morphism of an oriented
diagram whose all connected components are
spherical with $2$ faces each.
Since the presentation is singularly aspherical,
each morphism of an oriented spherical diagram with $2$
faces represents $0$ in the group of $2$-dimensional chains
(because the images of the two faces coincide).
Hence every cellular $2$-cycle in $\tilde K(X;R)$
represents the trivial homology class, and thus
$H_2(\tilde K(X;R))=0$.

Since $K(X;R)$ is a $2$-dimensional
Eilenberg-MacLane complex of type $(G,1)$, the
geometric and cohomological dimensions of $G$ are at most $2$.
The cohomological dimension of $G$ cannot be $1$ because
by Stallings-Swan theorem \cite{Stallings:1968:tfgime,Swan:1969:gcd1},
every group of cohomological dimension $1$ is free.
Thus both the cohomological and the
geometric dimensions of $G$ are $2$
(the cohomological dimension is always less then or equal to
the geometric dimension).
Corollary VIII.2.5 in \cite[Chapter VIII]{Brown:1994:cg} states that
every group of finite cohomological dimension is torsion-free.
\end{proof}

It is worth mentioning here that there is also a notion of
\emph{combinatorial asphericity},
which is weaker than
diagrammatic asphericity: a presentation
is \emph{combinatorial aspherical\/} if its
\emph{Cayley complex\/}
(which is obtained from $\tilde K(X;R)$ by identifying certain faces)
is \emph{aspherical},
though in this form the definition only makes good sense
for presentations in which all relators are cyclically reduced
and hence the Cayley complex is well-defined.
Relators of a combinatorially aspherical presentation may represent
proper powers in the free group, and a
combinatorially aspherical group may have torsion,
but Theorem 3 in \cite{Huebschmann:1979:ctagscg}
shows that finite-order elements in such a group
are only the ``obvious'' ones.



\section{Technical lemmas}
\label{section:technical_lemmas}

The idea of the proof of the main theorem is to use
properties of the constructed presentation \tGP{a,b}{\mathcal R}
to show nonexistence of certain diagrams with
certain contour labels.
The desired nonexistence can be proved by contradiction,
by using the imposed small-cancellation-type conditions
and some combinatorics to show that
all edges of a hypothetical diagram $\Delta$
can be distributed among its faces in such a way
that sufficiently few edges be assigned to each face,
and hence coming to a contradiction with the equality
$$
\norm{\Delta(1)}
=\sum_{\Pi\in\Delta(2)}\!\frac{1}{2}\abs{\cntr\Pi}
+\sum_{i}\frac{1}{2}\abs{\cntr_i\Delta}.
$$

For example, to prove that the group $G$ is nontrivial, it is enough
to show that there exist no disc diagram over \tGP{a,b}{\mathcal R}
with the contour of length $1$, and hence $[a]_G\ne1$.
This can be done as follows.
If there exists a disc diagram over \tGP{a,b}{\mathcal R}
with the contour of length $1$, then
it can be transformed into another such
diagram which will be ``convenient'' in a certain sense,
and all the edges of this new diagram
can be distributed among its faces in such
a way that the number of edges assigned to each face be strictly
less than half of the contour length of that face,
but this is impossible because of the aforementioned equality.

Lemmas \ref{lemma:estimating_lemma.selected_arcs.(4.3)},
\ref{lemma:estimating_lemma.exceptional_arcs.(4.4)},
and \ref{lemma:inductive_lemma.(4.5)}
(Estimating Lemmas and Inductive Lemma)
proved in this section
shall be the main tools in proving by induction
that edges of the diagrams under consideration
can be distributed
among their faces as needed.

Some of the definitions and results of this section
are simplified versions
of the corresponding definitions and results
of \cite{Muranov:2007:fgisgicw} and may be not
exactly equivalent.

\begin{definition}
An \emph{$S_1$-map\/} is a map together with a system of nontrivial
reduced paths in the characteristic boundaries of its faces,
called \emph{selected paths},
satisfying the following conditions:
\begin{enumerate}
\item
	the inverse path of every selected path is selected,
\item
	every nontrivial subpath of every selected path is selected,
\item
	the image of every selected path in the $1$-skeleton of the map
	is reduced, and
\item
	distinct maximal selected paths do not overlap
	(meaning that no edge lies on both of them)
	unless they are mutually inverse,
	and maximal selected paths are simple
	(make at most one full circle).
\end{enumerate}
\end{definition}

(The last condition of this definition is not present in
\cite{Muranov:2007:fgisgicw};
it is added here to slightly simplify the definition
of $\kappa$ and $\kappa'$ and the proof of
Lemma~\ref{lemma:estimating_lemma.selected_arcs.(4.3)}.)

An internal arc of an $S_1$-map is called \emph{selected\/} if
it lies entirely on the images of selected paths
``from both sides.''

\begin{definition}
Let $\Delta$ be an $S_1$-map, $\Phi$ its simple disc submap.
Then $\Delta$ is said to satisfy
the \emph{condition\/ $\mathsf Z(2)$ relative to\/ $\Phi$} if
no cyclic shift of $\cntr{}\Phi$ can be decomposed
as the concatenation of $2$ paths each of which either is trivial
or is the image of a selected path in the characteristic boundary
of a face that does not belong to $\Phi$ (is outside of $\Phi$).
\end{definition}

Figure \ref{figure:3} shows two situations prohibited by the condition
$\mathsf Z(2)$ relative to $\Phi$,
where $q,q_1,q_2$ are selected paths in the characteristic
boundaries of $\Pi,\Pi_1,\Pi_2$, respectively.
\begin{figure}\centering
\includegraphics[scale=1.1]{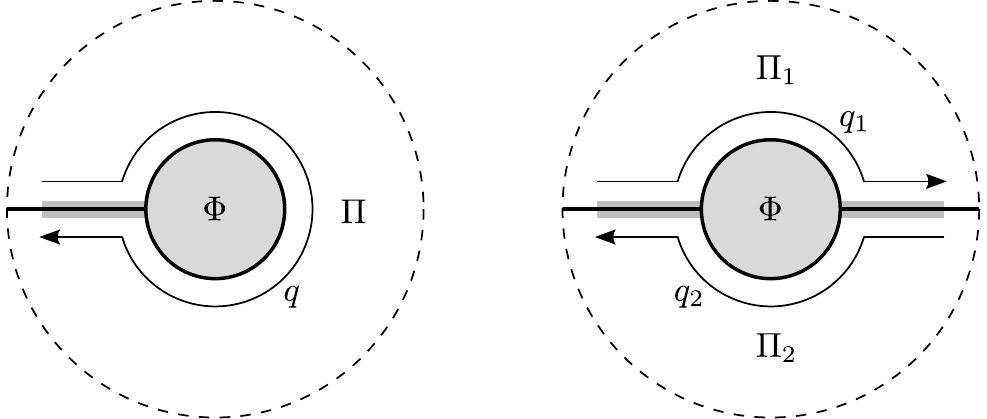}
\caption{Two situations prohibited by the condition $\mathsf Z(2)$.}
\label{figure:3}
\end{figure}

\begin{definition}
An \emph{indexed\/} map is a map $\Delta$ together with a function
$\iota\colon \Delta(2)\to I$, where $I$ is an arbitrary index set.
The value $\iota(\Pi)$ is called the \emph{index\/} of the face~$\Pi$.
\end{definition}

\begin{definition}
An \emph{$S_2$-map\/} is an indexed map together with
a system of non-overlapping internal arcs
called \emph{exceptional\/} arcs, such that if two faces are incident
to the same exceptional arc, they have the same index.
Exceptional arcs shall be assigned indices according
to the indices of the incident faces.
\end{definition}

If $\Gamma$ is a submap of an $S_2$-map $\Delta$,
then $\Gamma$ naturally inherits the structure of an $S_2$-map:
an arc of $\Gamma$ is exceptional if and only if it is exceptional
in $\Delta$ and internal in $\Gamma$
(this is different from~\cite{Muranov:2007:fgisgicw}).

\begin{definition}
Let $\Gamma$ be a connected submap of an $S_2$-map $\Delta$,
and for every $i$, let $A_i$ denote the set of all index-$i$
exceptional arcs of $\Gamma$
(\ie\ exceptional arcs of $\Delta$ that are internal in $\Gamma$),
$B_i$ denote the set of all index-$i$ faces of $\Gamma$,
and $K_i$ denote the set of all connected
components of the subcomplex of $\Gamma$ obtained by
removing all the faces that are in $B_i$ and all the arcs
that are in $A_i$.
Then $\Gamma$ is said to satisfy
the \emph{condition\/ $\mathsf Y$ relative to\/ $\Delta$} if
for every $i$ such that $A_i\ne\varnothing$,
the number of elements of $K_i$ that either have
Euler characteristic $1$ or contain an index-$i$ exceptional
arc of $\Delta$ incident to a face of $\Gamma$
(which has to be external in $\Gamma$)
is less than or equal to~$\norm{B_i}$.
\end{definition}

Note that in the last definition, the ambient map $\Delta$
only serves to distinguish certain external arcs of $\Gamma$
as \emph{exceptional in\/~$\Delta$}.

\begin{definition}
An $S$-map is a map together with structures of both an $S_1$-map
and an $S_2$-map such that every exceptional arc is selected.
\end{definition}

\begin{definition}
Let $\Gamma$ be a submap of an $S$-map $\Delta$, and let
$\lambda,\mu,\nu$ be functions $\Gamma(2)\to[0,1]$.
Then $\Gamma$ is said to satisfy the
\emph{condition\/ $\mathsf D(\lambda,\mu,\nu)$
relative to\/ $\Delta$} if
the following three conditions hold:
\begin{description}
\item[$\mathsf D_1(\lambda)$]
	if $\Pi$ is a face of $\Gamma$ and
	$L$ is the number
	of edges of the characteristic boundary of $\Pi$
	that do not lie on any selected path,
	then
	$$
	L\le\lambda(\Pi)\abs{\cntr\Pi};
	$$
\item[$\mathsf D_2(\mu)$]
	if $\Pi$ is a face of $\Gamma$,
	$u$ is a selected (internal) arc of $\Delta$ incident to $\Pi$,
	and $M$ is the number of those edges of $u$
	that do not lie on any exceptional arc of $\Delta$, then
	$$
	M\le\mu(\Pi)\abs{\cntr\Pi};
	$$
\item[$\mathsf D_3(\nu)$]
	if $\Pi$ is a face of $\Delta$,
	$p$ is a simple path in $\Gamma$ which is the image of a selected
	path in the characteristic boundary of $\Pi$, and
	$N$ is the sum of the lengths of all the
	exceptional arcs of $\Delta$ that lie on $p$,
	then
	$$
	N\le\nu(\Theta)\abs{\cntr\Theta}
	$$
	for every face $\Theta$ of $\Gamma$ satisfying
	$\iota(\Theta)=\iota(\Pi)$.
\end{description}
\end{definition}

In the context of proving the main theorem,
diagrams over \tGP{a,b}{\mathcal R}
shall be given the structure of $S$-maps
as shown on Figure \ref{figure:4}.
Namely, the maximal selected paths in the characteristic boundaries
of faces in those diagrams shall
correspond to the ``$u$-subwords'' of the defining relations,
the index of a face shall be the index of its label
(if $\ell(\cntr{}\Pi)=r_n$, then $\iota(\Pi)=n$), and
an internal arc shall be exceptional if and only if it corresponds
``on both sides'' to the the same part of the same ``$u$-subword''
(this can happen even in a weakly strictly reduced diagram since
relators contain pairs of mutually inverse ``$u$-subwords'').
In fact, not all diagrams over \tGP{a,b}{\mathcal R}
shall be used, but only
weakly strictly reduced ones
in which each exceptional arc corresponds to an \emph{entire\/}
``$u$-subword''; such diagrams shall be called ``\emph{convenient}.''
This restriction shall be needed in order for the defined above
condition $\mathsf Y$ to be satisfied.
Note also that if $s$ is a selected
internal arc incident to a face $\Pi$ in any such
$S$-diagram over \tGP{a,b}{\mathcal R},
then $\abs{s}\le\nu_{\iota(\Pi)}\abs{\cntr{}\Pi}$,
and if additionally $s$ is not exceptional, then
$\abs{s}\le\mu_{\iota(\Pi)}\abs{\cntr{}\Pi}$,
and hence such $S$-diagrams satisfy
the condition $\mathsf D$ with appropriate parameters.
\begin{figure}\centering
\includegraphics[scale=1.1]{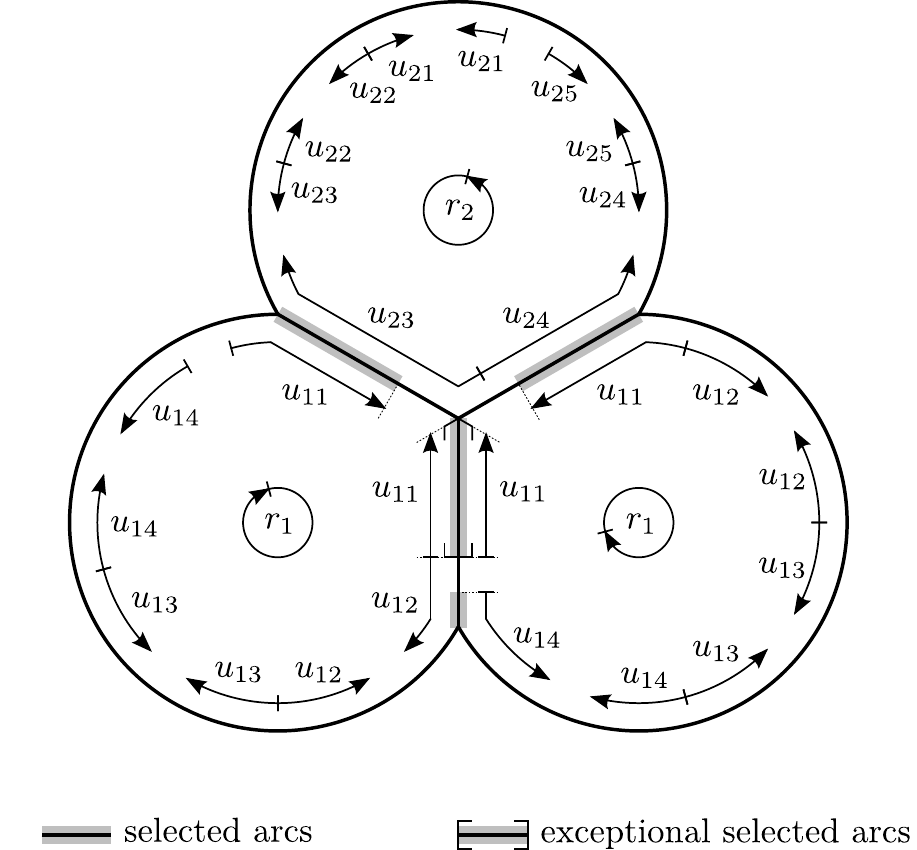}
\caption{A diagram over \tGP{a,b}{r_1,r_2,\dotsc} as an $S$-map.}
\label{figure:4}
\end{figure}

The following notation shall be used starting from
Lemma \ref{lemma:estimating_lemma.selected_arcs.(4.3)}
(First Estimating Lemma).
Let $\Pi$ be a face of an $S_1$-map $\Delta$.
If $\Pi$ has at least one selected path in its characteristic boundary,
then let $\kappa(\Pi)$, or $\kappa_\Delta(\Pi)$,
denote the number of connected components obtained from
the characteristic boundary of $\Pi$
by removing all edges and all intermediate vertices of all its
selected paths,
and let $\kappa'(\Pi)$, or $\kappa'_\Delta(\Pi)$, denote the number of
those components that either do not consist of
a single vertex or consist of a single vertex whose image
in the $1$-skeleton of $\Delta$ has degree~$1$.
If $\Pi$ has no selected paths, then let
$\kappa(\Pi)=\kappa'(\Pi)=0$.
Observe that $\kappa'(\Pi)\le\kappa(\Pi)$ and that $\kappa(\Pi)$
is also the number of maximal selected paths
in the characteristic boundary of $\Pi$ going in the same direction.
Note that if all nontrivial reduced paths in the characteristic boundary
of $\Pi$ are selected, then $\kappa(\Pi)=\kappa'(\Pi)=0$.
In the context of proving the main theorem,
$\kappa(\Pi)=\kappa_{\iota(\Pi)}$.

Lemmas \ref{lemma:estimating_lemma.selected_arcs.(4.3)},
\ref{lemma:estimating_lemma.exceptional_arcs.(4.4)},
\ref{lemma:inductive_lemma.(4.5)} are identical or slightly
simplified versions of Lemmas 50, 54, 58 in \cite{Muranov:2007:fgisgicw};
however, somewhat less formal proofs shall be given below for convenience.

The proof of Lemma \ref{lemma:estimating_lemma.selected_arcs.(4.3)}
relies on the lemma of Philip Hall stated below.

If $R$ is a binary relation and $X$ is a set, then denote
$$
R(X)=\{\,y\mid(\exists x\in X)(x\mathrel Ry)\,\}.
$$

\begin{lemma}[Philip Hall \cite{Hall:1935:ors}]
\label{lemma:Halls_lemma.(4.1)}
Let\/ $A$ and\/ $B$ be two finite sets\textup,
and\/ $R$ be a relation from\/ $A$ to\/ $B$
\textup(\ie\ $R\subset A\times B$\textup{).}
Then the following are equivalent\textup.
\begin{itemize}
\item[\textup{(I)}]
	There exists an injection\/ $h\colon A\to B$ such that
	for every\/ $x\in A$\textup, $x\mathrel Rh(x)$\textup.
\item[\textup{(II)}]
	For every\/ $X\subset A$\textup,
	$\norm{R(X)}\ge\norm{X}$\textup.
\end{itemize}
\end{lemma}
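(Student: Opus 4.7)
The plan is to establish the easy direction (I)$\Rightarrow$(II) by a one-line observation, then prove (II)$\Rightarrow$(I) by strong induction on $\norm{A}$.

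For (I)$\Rightarrow$(II), if $h$ is an injection satisfying $x\mathrel Rh(x)$ for all $x\in A$, then for every $X\subset A$ the image $h(X)$ is contained in $R(X)$, so $\norm{R(X)}\ge\norm{h(X)}=\norm{X}$.

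For the main direction (II)$\Rightarrow$(I), I would induct on $\norm{A}$. The base case $\norm{A}\le 1$ is immediate: if $\norm{A}=0$ take the empty map, and if $A=\{a\}$ then $\norm{R(\{a\})}\ge 1$, so any $b\in R(\{a\})$ gives a valid $h$. For the inductive step, assume the result for all strictly smaller domains and split into two cases according to whether Hall's condition holds strictly on proper nonempty subsets.

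Case 1: for every nonempty $X\subsetneq A$ one has $\norm{R(X)}>\norm{X}$. Pick any $a\in A$ and any $b\in R(\{a\})$, set $h(a)=b$, and consider the restricted relation $R'\subset (A\setminus\{a\})\times(B\setminus\{b\})$. For any $X\subset A\setminus\{a\}$, $R'(X)\supset R(X)\setminus\{b\}$, so $\norm{R'(X)}\ge\norm{R(X)}-1\ge\norm{X}$ (using the strict inequality when $X\ne\varnothing$, and triviality when $X=\varnothing$). Apply the induction hypothesis to extend $h$ to $A\setminus\{a\}$.

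Case 2: there exists a proper nonempty $X_0\subsetneq A$ with $\norm{R(X_0)}=\norm{X_0}$. The relation $R$ restricted to $X_0\times R(X_0)$ satisfies Hall's condition (it is inherited from $R$), so by induction there is an injection $h_1\colon X_0\to R(X_0)$ with $x\mathrel Rh_1(x)$; this $h_1$ must be a bijection onto $R(X_0)$ by the equality of cardinalities. Now consider $R''\subset(A\setminus X_0)\times(B\setminus R(X_0))$, the restriction of $R$. For any $Y\subset A\setminus X_0$,
\[
\norm{R(Y\cup X_0)}\ge\norm{Y\cup X_0}=\norm{Y}+\norm{X_0},
\]
while $R''(Y)=R(Y)\setminus R(X_0)\supset R(Y\cup X_0)\setminus R(X_0)$, so $\norm{R''(Y)}\ge\norm{R(Y\cup X_0)}-\norm{R(X_0)}\ge\norm{Y}$. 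Induction applied to $A\setminus X_0$ yields an injection $h_2\colon A\setminus X_0\to B\setminus R(X_0)$ compatible with $R$, and the disjoint union $h_1\cup h_2$ is the desired injection $h\colon A\to B$.

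The main obstacle is correctly setting up the reduction in Case 2 so that Hall's condition is preserved on the complement $A\setminus X_0$; the key trick is the bound $\norm{R''(Y)}\ge\norm{R(Y\cup X_0)}-\norm{R(X_0)}$, which combined with Hall's condition on $Y\cup X_0$ in the original problem and the equality on $X_0$ cancels the contribution of $X_0$ exactly.
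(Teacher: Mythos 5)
Your proof is correct, and it is the classic Halmos--Vaughan induction proof of Hall's marriage theorem. Note, however, that the paper itself does not supply a proof of this lemma: immediately after stating Lemma~\ref{lemma:Halls_lemma.(4.1)} and Corollary~\ref{corollary.lemma:1.Halls_lemma.(4.2)}, the author simply remarks that ``proofs of the lemma and the corollary may be found, for example, in~\cite{Muranov:2005:dsmcbgbsg},'' so there is no in-paper argument to compare against. Your writeup is sound: (I)$\Rightarrow$(II) is the observation that $h(X)\subset R(X)$; the inductive step correctly splits into the ``strict surplus everywhere'' case (delete one chosen pair and observe Hall's condition survives the loss of one element of $B$) and the ``tight subset $X_0$'' case (split across $X_0$, where the key cancellation $\norm{R''(Y)}\ge\norm{R(Y\cup X_0)}-\norm{R(X_0)}\ge\norm{Y}$ uses $\norm{R(X_0)}=\norm{X_0}$). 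One could tighten the exposition slightly by noting that $R''(Y)=R(Y\cup X_0)\setminus R(X_0)$ exactly, since $R(Y\cup X_0)=R(Y)\cup R(X_0)$, rather than merely a containment, but the inclusion you stated is all that is needed.
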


\begin{corollary.lemma}
\label{corollary.lemma:1.Halls_lemma.(4.2)}
Let\/ $A$ and\/ $B$ be finite sets\textup,
$R\subset A\times B$\textup,
$f\colon B\to\mathbb N\cup\{0\}$\textup.
Then the following are equivalent\textup.
\begin{itemize}
\item[\textup{(I)}]
	There exists\/ $h\colon A\to B$ such that\/\textup:
	\begin{enumerate}
	\item
		for every\/ $x\in A$\textup, $x\mathrel Rh(x)$\textup, and
	\item
		for every\/ $y\in B$\textup, $\norm{h^{-1}(y)}\le f(y)$\textup.
	\end{enumerate}
\item[\textup{(II)}]
	For every\/ $X\subset A$\textup,
	$$
	\sum_{y\in R(X)}f(y)\ge\norm{X}.
	$$
\end{itemize}
\end{corollary.lemma}

Proofs of the lemma and the corollary may be found, for example,
in~\cite{Muranov:2005:dsmcbgbsg}.

\begin{lemma}[First Estimating Lemma]
\label{lemma:estimating_lemma.selected_arcs.(4.3)}
Let\/ $\Delta$ be a connected\/ $S_1$-map
which either is not elementary spherical or
has a face in whose characteristic boundary
not all nontrivial reduced paths are selected\textup.
Let\/ $A$ be the set of all maximal selected\/
\textup(internal\/\textup) arcs of\/ $\Delta$\textup.
Let\/ $B$ be the set of all faces of\/ $\Delta$ that are incident to
selected arcs\textup.
Let\/ $C$ be some set of faces of\/ $\Delta$ such that
$\Delta$ satisfies the condition\/ $\mathsf Z(2)$
relative to every simple disc submap that
does not contain any faces from\/ $C$ and does not contain
at least one arc from\/ $A$\textup.
Let\/ $n$ be the number of contours of\/ $\Delta$\textup.
Then either\/ $A=\varnothing$\textup, or
$$
\norm{A}\le\sum_{\Pi\in B}\!
\bigl(3+\kappa(\Pi)+\kappa'(\Pi)\bigr)
+2\norm{C\setminus B}-3\chi(\Delta)-n.
$$
Moreover\textup, if\/ $D\subset\Delta(2)$\textup,
then there exist a set\/ $E\subset A$ and a function\/
$f\colon{}\linebreak[0]A\setminus E\to D$ such that\/\textup:
\begin{enumerate}
\item
	either\/ $E=\varnothing$\textup, or
	$$
	\norm{E}\le\sum_{\Pi\in B\setminus D}\!
	\bigl(3+\kappa(\Pi)+\kappa'(\Pi)\bigr)\\
	+2\norm{C\setminus (B\setminus D)}
	-3\chi(\Delta)-n;
	$$
\item
	for every\/ $u\in A\setminus E$\textup, the arc\/ $u$ is incident to
	the face\/ $f(u)$\textup;
\item
	for every\/ $\Pi\in D$\textup,
	$\norm{f^{-1}(\Pi)}\le3+\kappa(\Pi)+\kappa'(\Pi)$\textup;
\item
	for every\/ $\Pi\in C$\textup,
	$\norm{f^{-1}(\Pi)}\le1+\kappa(\Pi)+\kappa'(\Pi)$\textup.
\end{enumerate}
\end{lemma}
\begin{proof}
It suffices to prove this lemma in the case when $\Delta$ is closed
(if $\Delta$ is not closed,
apply this lemma to the closure of $\Delta$ in which
the added ``outer'' faces have no selected paths in their
characteristic boundaries,
to the set $C$ extended by the ``outer'' faces,
and to the same set $D$).
So assume $\Delta$ is closed.
Without loss of generality, assume $D\subset B$.

If $A=\varnothing$, the proof is easy, so assume $A\ne\varnothing$.

Consider an arbitrary non-empty subset $X$ of $A$,
and let $Y$ be the set of all faces of $\Delta$ that are incident to
arcs from $X$ (in particular, $Y\subset B$).
Then
\begin{align*}
&\sum_{\Pi\in Y\cap(D\cap C)}\!\bigl(1+\kappa(\Pi)+\kappa'(\Pi)\bigr)
+\sum_{\Pi\in Y\cap(D\setminus C)}\!
\bigl(3+\kappa(\Pi)+\kappa'(\Pi)\bigr)\\
&\qquad+\sum_{\Pi\in B\setminus D}
\bigl(3+\kappa(\Pi)+\kappa'(\Pi)\bigr)
+2\norm{C\setminus (B\setminus D)}-3\chi(\Delta)\\
&\quad=
\sum_{\Pi\in (Y\cap D)\cap C}\!
\bigl(1+\kappa(\Pi)+\kappa'(\Pi)\bigr)
+\sum_{\Pi\in (Y\cap D)\setminus C}\!
\bigl(3+\kappa(\Pi)+\kappa'(\Pi)\bigr)\\
&\qquad+\sum_{\Pi\in (B\setminus D)\cap C}
\bigl(1+\kappa(\Pi)+\kappa'(\Pi)\bigr)
+\sum_{\Pi\in (B\setminus D)\setminus C}
\bigl(3+\kappa(\Pi)+\kappa'(\Pi)\bigr)\\
&\qquad+2\norm{C}-3\chi(\Delta)\\
&\quad\ge
\sum_{\Pi\in Y\cap C}\!
\bigl(1+\kappa(\Pi)+\kappa'(\Pi)\bigr)
+\sum_{\Pi\in Y\setminus C}\!
\bigl(3+\kappa(\Pi)+\kappa'(\Pi)\bigr)\\
&\qquad+2\norm{C}-3\chi(\Delta)\\
&\quad=
\sum_{\Pi\in Y}\!
\bigl(3+\kappa(\Pi)+\kappa'(\Pi)\bigr)
+2\norm{C\setminus Y}-3\chi(\Delta).
\end{align*}
Thus to complete the proof, it will suffice to show that
$$
\norm{X}\le\sum_{\Pi\in Y}\!
\bigl(3+\kappa(\Pi)+\kappa'(\Pi)\bigr)
+2\norm{C\setminus Y}-3\chi(\Delta),
$$
and afterwards to apply the implication
$\text{(II)}\Rightarrow\text{(I)}$
of Corollary \ref{corollary.lemma:1.Halls_lemma.(4.2)} of Hall's Lemma
to the sets $A$ and $D\sqcup\{\varepsilon\}$, where $\varepsilon$
is some additional element whose preimage is to be taken as the set $E$,
and to the relation $R$ defined as follows:
for all $x\in A$ and $y\in D\sqcup\{\varepsilon\}$,
$x\mathrel Ry$ if and only if either $x$ is incident to
$y\in D$, or $y=\varepsilon$.

Let $K$ be the set of all connected components of the complex
obtained from $\Delta$ by removing
all the faces that are in $Y$ and all the arcs that are in $X$.
For every $\Psi\in K$,
let $d(\Psi)$ denote the number of ends of arcs from $X$
that are attached to $\Psi$.
(One can consider the graph obtained from $K$ and $X$ by collapsing each
elements of $K$ into a point and replacing each arc from $X$
with a single edge---then $d(\Psi)$ will be the degree
of the corresponding vertex of this graph.)
Clearly,
$$
\sum_{\Psi\in K}\chi(\Psi)-\norm{X}+\norm{Y}=\chi(\Delta)
\qquad\text{and}\qquad
\sum_{\Psi\in K}d(\Psi)=2\norm{X}.
$$
These two equations imply that
\begin{align*}
\norm{X}
&=3\norm{Y}+3\sum_{\Psi\in K}\chi(\Psi)-2\norm{X}-3\chi(\Delta)\\
&=3\norm{Y}+\sum_{\Psi\in K}\bigl(3\chi(\Psi)-d(\Psi)\bigr)
-3\chi(\Delta).
\end{align*}

By Lemma \ref{lemma:positive_Euler_characteristic.(3.1)} and Corollary
\ref{corollary.lemma:maps__positive_Euler_characteristic.(3.2)},
for every $\Psi\in K$, $\chi(\Psi)\le1$,
and if $\chi(\Psi)=1$,
then $\Psi$ is the underlying complex of a disc submap of $\Delta$.
Let
$$
K_{i}'=\{\,\Psi\in K\,|\,d(\Psi)=i\ \text{and}\ \chi(\Psi)=1\,\}
\quad\text{for}\quad i=0,1,2,\dots.
$$
Observe that $K_{0}'=\varnothing$.
Then
$$
\norm{X}\le3\norm{Y}+2\norm{K_{1}'}+\norm{K_{2}'}-3\chi(\Delta).
$$
It will suffice to show now that
$$
2\norm{K_{1}'}+\norm{K_{2}'}
\le\sum_{\Pi\in Y}\!(\kappa(\Pi)+\kappa'(\Pi))+2\norm{C\setminus Y}.
$$
Here the condition $\mathsf Z(2)$ shall be used.

Let $L$ be the set of all connected components obtained from
the (disjoint) union of the characteristic boundaries
of all faces from $Y$
by removing all edges and all
intermediate vertices of all their selected paths.
Let $L'$ be the set of those of these components that
either do not consist of
a single vertex or consist of a single vertex whose image
in the $1$-skeleton of $\Delta$ has degree~$1$.
Then
$$
\norm{L}=\sum_{\Pi\in Y}\!\kappa(\Pi),\qquad
\norm{L'}=\sum_{\Pi\in Y}\!\kappa'(\Pi).
$$

The image of each element of $L$ under the corresponding attaching
morphism lies entirely in some element of $K$, hence to each element
of $L$ there is associated an element of $K$.
Because of the condition $\mathsf Z(2)$, and because arcs in $X$
are maximal selected, to each element of
$K_{1}'\sqcup K_{2}'$ not containing any faces from $C\setminus Y$
there is associated at least $1$ element of $L$, and furthermore,
to each element of $K_{1}'$ not containing faces from
$C\setminus Y$
there is associated either at least $2$ element of $L$
or at least $1$ elements of $L'\subset L$.
(A care should be taken when verifying this last statement because
elements of $K_{1}'\sqcup K_{2}'$
are not necessarily underlying complexes
of \emph{simple\/} disc maps, and some may consist
of a single vertex.)
Hence
\begin{align*}
2\norm{K_{1}'}+\norm{K_{2}'}
&\le\norm{L}+\norm{L'}+2\norm{C\setminus Y}\\
&=\sum_{\Pi\in Y}\!(\kappa(\Pi)+\kappa'(\Pi))+2\norm{C\setminus Y}.
\end{align*}
\end{proof}

\begin{lemma}[Second Estimating Lemma]
\label{lemma:estimating_lemma.exceptional_arcs.(4.4)}
Let\/ $\Delta$ be a connected submap of an\/ $S_2$-map\textup,
and suppose\/ $\Delta$ satisfies the condition\/ $\mathsf Y$
relative to the ambient\/ $S_2$-map\textup.
For every\/ $i$\textup,
let\/ $A_i$ be the set of all index-\/$i$
internal exceptional arcs of\/ $\Delta$\textup, and\/
$B_i$ the set of all index-\/$i$ faces of\/ $\Delta$\textup.
For every\/ $i$\textup, let\/ $\delta_i=1$
if\/ $\Delta$ has an external arc incident to a face which is
an exceptional arc of index\/ $i$ in the ambient\/ $S_2$-map\textup,
and let\/ $\delta_i=0$ otherwise\textup.
Then for every\/ $i$\textup, either\/ $A_i=\varnothing$\textup, or
$$
\norm{A_i}\le2\norm{B_i}-\chi(\Delta)-\delta_i.
$$
Furthermore\textup, there exists a set\/ $E$ such that\/\textup:
\begin{enumerate}
\item
	either\/ $E=\varnothing$\textup, or
	$\norm{E}\le-\chi(\Delta)$\textup;
\item
	for every\/ $i$\textup,
	$\norm{A_i\setminus E}\le2\norm{B_i}-\delta_i$\textup.
\end{enumerate}
\end{lemma}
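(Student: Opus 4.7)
My plan is to follow the template of the First Estimating Lemma: an Euler-characteristic identity obtained by removing the relevant faces and arcs, combined with the bound $\chi(\Psi)\le1$ on subcomplexes of a surface and with condition~$\mathsf Y$. Fix an index $i$ with $A_i\ne\varnothing$, and form the subcomplex $\Gamma_i$ of $\Delta$ by deleting the open $2$-cells of all faces in $B_i$ and the open edges together with the interior vertices of all arcs in $A_i$. Each face deletion contributes $-1$ and each open-arc deletion $+1$ to $\chi$, so if $K_i$ denotes the set of connected components of $\Gamma_i$,
\[
\sum_{\Psi\in K_i}\chi(\Psi) \;=\; \chi(\Delta) + |A_i| - |B_i|.
\]
By Lemma~\ref{lemma:positive_Euler_characteristic.(3.1)}, $\chi(\Psi)\le1$, with equality iff $\Psi$ is a singular combinatorial disc. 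Write $K_i^+$ for the set of such discs and $K_i^{\mathrm{ext}}$ for the set of components of $K_i$ that contain an index-$i$ exceptional arc of $\Delta$ which is external in $\Gamma$ and incident to a face of $\Gamma$. Then $\sum\chi(\Psi)\le|K_i^+|$, and by condition~$\mathsf Y$, $|K_i^+\cup K_i^{\mathrm{ext}}|\le|B_i|$, so in particular $|K_i^+|\le|B_i|$. This immediately yields $|A_i|\le 2|B_i|-\chi(\Delta)$.

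To gain the extra $-\delta_i$, assume $\delta_i=1$ and let $\alpha$ be a witnessing external index-$i$ exceptional arc. The face $\Pi_0$ incident to $\alpha$ in $\Gamma$ has index $i$ (since by definition an index-$i$ exceptional arc is incident only to index-$i$ faces), so $\Pi_0\in B_i$ and the component $\Psi_0\in K_i$ containing $\alpha$ lies in $K_i^{\mathrm{ext}}$. If $\chi(\Psi_0)<1$, then $K_i^{\mathrm{ext}}\not\subseteq K_i^+$ and $|K_i^+|\le|K_i^+\cup K_i^{\mathrm{ext}}|-1\le|B_i|-1$. Otherwise $\Psi_0$ is a singular disc, and I plan to exploit that $\alpha\subseteq\partial\Gamma$ forces $\Psi_0$ to carry a free boundary portion along the contour of $\Gamma$, which, combined with the non-overlap of exceptional arcs around $\partial\Pi_0$, either exhibits a second element of $K_i^{\mathrm{ext}}\smallsetminus K_i^+$ or else tightens the $\chi$-estimate $\sum\chi(\Psi)\le|K_i^+|-1$ directly. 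Either way, $|A_i|\le 2|B_i|-\chi(\Delta)-\delta_i$ follows.

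For the second assertion I apply Corollary~\ref{corollary.lemma:1.Halls_lemma.(4.2)} to $A=\bigsqcup_i A_i$ and to $B\sqcup\{\varepsilon\}$ with $B=\bigsqcup_i B_i$; the relation $R$ pairs each arc with the (at most two) incident faces of its own index and with $\varepsilon$, and the capacities are chosen so that $\sum_{\Pi\in B_i}f(\Pi)=2|B_i|-\delta_i$ for each $i$ and $f(\varepsilon)=-\chi(\Delta)$. To verify the Hall condition~(II) for an arbitrary $X=\bigsqcup X_i\subseteq A$, I repeat the Euler computation for the simultaneous removal of $\bigcup_i R(X_i)$ and $X$; since faces and exceptional arcs of different indices are disjoint, the Euler counts aggregate cleanly and reduce the condition to a summed form of the first-part inequality. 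Setting $E=h^{-1}(\varepsilon)$ then gives $|E|\le-\chi(\Delta)$ and $|A_i\smallsetminus E|\le 2|B_i|-\delta_i$ for every $i$.

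The principal obstacle is the $-\delta_i$ refinement in the sub-case where the component containing the external exceptional arc is itself a singular disc: extracting the extra unit requires a careful local analysis of how $\partial\Psi_0$ interleaves with $\partial\Gamma$ and with the boundary arcs of the removed face~$\Pi_0$, since a naive application of condition~$\mathsf Y$ only gives $|K_i^+|\le|B_i|$ in that configuration.
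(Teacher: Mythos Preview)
Your overall structure matches the paper's proof: the Euler identity
\[
\sum_{\Psi\in K_i}\chi(\Psi)=\chi(\Delta)+|A_i|-|B_i|
\]
together with $\chi(\Psi)\le 1$ and condition $\mathsf Y$ is exactly right. The gap is precisely where you locate it, but the resolution is much simpler than you anticipate: the case $\chi(\Psi_0)=1$ \emph{never occurs}. The arcs removed (those in $A_i$) are all \emph{internal} to $\Delta$, so every edge on the contour $c$ of $\Delta$ through $\alpha$ survives in $\Gamma_i$; hence the entire closed path $c$ lies in the component $\Psi_0$. Viewed in the closure $\bar\Delta$, the outer face attached along $c$ is a complementary face of $\Psi_0$ whose full boundary lies in $\Psi_0$, so $c$ is one complete contour of $\Psi_0$. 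But $\alpha$ also borders the removed face $\Pi_0$, and since $c$ traverses $\alpha$ only once, the $\Pi_0$-side of $\alpha$ lies on a second, distinct boundary circle of $\Psi_0$. Thus $\Psi_0$ has at least two contours, so $\chi(\Psi_0)\le 0$ and $\Psi_0\notin K_i^+$, giving $|K_i^+|\le|B_i|-\delta_i$ directly from condition~$\mathsf Y$. No local analysis of $\partial\Psi_0$ versus $\partial\Pi_0$ is needed.

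For the second assertion, your Hall-based plan is roundabout, and the verification of condition~(II) for a subset $X$ is problematic: condition $\mathsf Y$ is formulated for the removal of \emph{all} of $B_i$ and $A_i$, not of the partial sets $R(X_i)\subseteq B_i$ and $X_i\subseteq A_i$, so your partial-removal Euler count does not obviously inherit the required bound on disc components. The paper bypasses Hall entirely: it proves the single inequality $|A_I|\le 2|B_I|-\sum_{i\in I}\delta_i-\chi(\Delta)$ for an arbitrary index set $I$ (using that every disc component of $K_I$, being a one-contour submap, is already a disc component of some $K_{\{i\}}$, whence $|K_I'|\le\sum_{i\in I}(|B_i|-\delta_i)$), and then simply specialises to $I=\{\,i:|A_i|>2|B_i|-\delta_i\,\}$, taking for $E$ the excess arcs in each such~$A_i$.
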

\begin{proof}
For every set $I$,
denote $A_I=\bigcup_{i\in I}A_i$, $B_I=\bigcup_{i\in I}B_i$.
Denote $K_I$ the set of all connected components of
the subcomplex obtained from $\Delta$
by removing all faces that are in $B_I$
and all arcs that are in $A_I$.
Denote $K_I'$ the set of all the elements of $K_I$
of Euler characteristic~$1$.

Let $I$ be an arbitrary set such that $A_I\ne\varnothing$.
It is to be shown that
$$
\norm{A_I}\le2\norm{B_I}-\sum_{i\in I}\delta_i-\chi(\Delta).
$$

By Lemma \ref{lemma:positive_Euler_characteristic.(3.1)} and Corollary
\ref{corollary.lemma:maps__positive_Euler_characteristic.(3.2)},
$\chi(\Psi)\le0$ for each $\Psi\in K_I\setminus K_I'$,
and every element of $K_I'$ is
the underlying complex of a disc submap of $\Delta$.
Therefore,
$$
\chi(\Delta)=\sum_{\Psi\in K_I}\chi(\Psi)-\norm{A_I}+\norm{B_I}
\le\norm{K_I'}-\norm{A_I}+\norm{B_I},
$$
and
$$
\norm{A_I}\le\norm{K_I'}+\norm{B_I}-\chi(\Delta).
$$

Every element of $K_I$ wich is the
underlying complex of a one-contour submap of $\Delta$ is also
an element of $K_{\{i\}}$ for some $i\in I$.
In particular, every element of $K_I'$ is a an element of
$K_{\{i\}}'$ for some $i\in I$.
It follows from the condition $\mathsf Y$ that for every $i$
such that $A_i\ne\varnothing$,
$\norm{K_{\{i\}}'}\le\norm{B_i}-\delta_i$.
(Because if $\Psi\in K_{\{i\}}$ and $\Psi$ contains an index-$i$
exceptional arc of the ambient $S_2$-map which is incident to
a face of $\Delta$, then $\Psi$ is the underlying complex of a submap
with at least $2$ contours,
and hence $\Psi\notin K_{\{i\}}'$.)
Hence
$$
\norm{K_I'}\le\norm{B_I}-\sum_{i\in I}\delta_i,
$$
and
$$
\norm{A_I}\le2\norm{B_I}-\sum_{i\in I}\delta_i-\chi(\Delta).
$$

Now to prove the first part of the satement of the lemma,
it suffices to apply the last inequality to the set $I=\{i\}$,
and to prove the second part
(the existence of the set $E$), it suffices to apply it to
$$
I=\{\,i\,\mid\,\norm{A_i}>2\norm{B_i}-\delta_i\,\}.
$$
\end{proof}

\begin{lemma}[Inductive Lemma]
\label{lemma:inductive_lemma.(4.5)}
Let\/ $\Delta$ be an\/ $S$-map\textup,
and\/ $\Phi$ a simple disc submap of\/ $\Delta$\textup.
Assume that\/ $\Phi$ satisfies the conditions\/ $\mathsf Y$
and\/ $\mathsf D(\lambda,\mu,\nu)$
relative to\/ $\Delta$\textup, where\/
$\lambda,\mu,\nu\colon\Phi(2)\to[0,1]$\textup,
and that
$$
(\lambda+(3+\kappa+\kappa')\mu+2\nu)(\Pi)\le\frac{1}{2}
$$
for every\/ $\Pi\in\Phi(2)$\textup.
Suppose\/ $\Delta$ satisfies the condition\/ $\mathsf Z(2)$
relative to every proper simple disc submap of\/ $\Phi$\textup.
Then\/ $\Delta$ satisfies\/ $\mathsf Z(2)$
relative to\/~$\Phi$\textup.
\end{lemma}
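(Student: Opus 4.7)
The plan is to prove this by contradiction, using an edge-count on $\Phi$ of the kind sketched at the start of the section. Suppose $\mathsf Z(2)$ for $\Phi$ fails, so a cyclic shift of $\cntr\Phi$ equals $p_1p_2$ with each $p_j$ either trivial or the image of a selected path in the characteristic boundary of a face $\Pi_j^\star$ outside $\Phi$. Since $\Phi$ is a simple disc submap, its Euler identity reads
$$
 2\,\norm{\Phi(1)}\;=\;\sum_{\Pi\in\Phi(2)}\abs{\cntr\Pi}\;+\;\abs{\cntr\Phi}.
$$
The idea is to upper-bound the right-hand sum using the two Estimating Lemmas together with the $\mathsf D$-conditions, and then to show that this bound is incompatible with the constraint that $\cntr\Phi$ is covered by just two external selected paths.

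First I would apply the First Estimating Lemma to $\Phi$, with $C=\varnothing$ and $D=\Phi(2)$, obtaining a distribution $f\colon A\to\Phi(2)$ of the maximal selected internal arcs of $\Phi$ satisfying $\norm{f^{-1}(\Pi)}\le 3+\kappa(\Pi)+\kappa'(\Pi)$. The input hypothesis of that lemma---that $\Phi$ satisfies $\mathsf Z(2)$ relative to every simple disc submap missing some arc from $A$---follows from the assumption that $\Delta$ satisfies $\mathsf Z(2)$ relative to every proper simple disc submap of $\Phi$. Next, using hypothesis $\mathsf Y$, I would apply the Second Estimating Lemma to $\Phi$ as a submap of $\Delta$, obtaining a compatible distribution of the exceptional arcs of $\Phi$ so that each face of index $i$ receives at most two of them. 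With $\mathsf D_1$, $\mathsf D_2$, and $\mathsf D_3$ in force, each face $\Pi\in\Phi(2)$ is then charged with at most $\lambda(\Pi)\abs{\cntr\Pi}$ non-selected edges, at most $\bigl(3+\kappa(\Pi)+\kappa'(\Pi)\bigr)\mu(\Pi)\abs{\cntr\Pi}$ non-exceptional edges from its assigned selected arcs, and at most $2\nu(\Pi)\abs{\cntr\Pi}$ exceptional-arc edges---a total of at most $\tfrac12\abs{\cntr\Pi}$ by the small-cancellation hypothesis.

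Summing these face-charges and arranging every internal edge of $\Phi$ to be charged to at least one face, then invoking the Euler identity, should yield an inequality of the form $\tfrac12\sum\abs{\cntr\Pi}\le\abs{\cntr\Phi}+\epsilon$, where $\epsilon$ is a correction term accounting for selected edges on $\cntr\Phi$ that escape the First Estimating Lemma's distribution. The failure of $\mathsf Z(2)$ now enters decisively: since $\cntr\Phi$ is covered by only two external selected paths $p_1,p_2$, one can control $\epsilon$ by applying $\mathsf D_3$ to $\Pi_1^\star$ and $\Pi_2^\star$ (which bounds the exceptional-arc length on the images of the $p_j$ in $\Phi$) together with $\mathsf D_2$-style bounds for the non-exceptional lengths; combining these estimates produces a strict numerical inequality incompatible with $\Phi$ being a nontrivial simple disc, yielding the desired contradiction.

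The main obstacle is the bookkeeping in this final step. Selected edges on $\cntr\Phi$ are not on any selected \emph{internal} arc of $\Phi$ and therefore escape the First Estimating Lemma's distribution; they must be charged jointly to their unique incident face of $\Phi$ and to the external face $\Pi_j^\star$ hosting them, and the coherence of this dual charging is precisely what the specific form of the $\mathsf Z(2)$ condition enforces. A closely related subtlety is matching the coefficient $2$ of $\nu$ in the small-cancellation hypothesis with the parameter $\delta_i$ appearing in the Second Estimating Lemma---which records exactly the external incidences of exceptional arcs of $\Phi$---so that the internal and external contributions cancel with the right multiplicities.
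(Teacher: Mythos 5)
There is a genuine gap, located precisely where you flag "the main obstacle": the bookkeeping of selected edges on $\cntr{}\Phi$, which your sketch does not resolve. The paper's device for handling this is the passage to the closure $\bar\Phi$ rather than working with $\Phi$ itself, and this is exactly the step your proposal skips.

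Concretely, the paper extends the $S_1$-structure of $\Phi$ to $\bar\Phi$ by selecting, in the characteristic boundary of the outer face $\Theta$, those paths whose images coincide with images of selected paths coming from faces of $\Delta$ outside $\Phi$. The supposed failure of $\mathsf Z(2)$ for $\Phi$ then translates into the quantitative statement $\kappa(\Theta)\le2$ and $\kappa'(\Theta)=0$: the contour of $\Phi$ is covered by at most two external selected paths with endpoints of degree $\ge2$. Applying the First Estimating Lemma to $\bar\Phi$ with $C=\{\Theta\}$ and $D=\Phi(2)$, the budget for the correction set is $3+\kappa(\Theta)+\kappa'(\Theta)-3\chi(\bar\Phi)\le 3+2+0-6=-1<0$, so $E=\varnothing$, and \emph{every} maximal selected arc of $\bar\Phi$ --- including the boundary arcs on $\cntr{}\Phi$ --- is distributed to some face $\Pi\in\Phi(2)$ subject to $\norm{f^{-1}(\Pi)}\le3+\kappa(\Pi)+\kappa'(\Pi)$. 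The boundary arcs are thus charged within the same budget as the internal ones, and $\mathsf D_2(\mu)$ applies to them because they are internal (hence selected) arcs of $\Delta$ incident to a face of $\Phi$. By contrast, applying the First Estimating Lemma to $\Phi$ directly with $C=\varnothing$ (as you propose) only distributes the selected arcs that are internal \emph{to $\Phi$}; selected edges on $\cntr{}\Phi$ lie neither in that $A$ nor in the count controlled by $\mathsf D_1(\lambda)$ (which concerns edges \emph{not} on selected paths), so they genuinely escape the accounting. Your suggestion to charge them "jointly" to the incident $\Phi$-face and the external face $\Pi_j^\star$ would overspend the $(3+\kappa+\kappa')\mu$-budget per face, and $\mathsf D_2$, $\mathsf D_3$ do not furnish bounds tied to external faces that could be added to the total on the left side of the final inequality, which is written entirely in terms of $\sum_{\Pi\in\Phi(2)}\abs{\cntr\Pi}$.

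Your handling of the exceptional part (Second Estimating Lemma with $\delta_i$, the factor $2$ in front of $\nu$, and $\mathsf D_3$ applied via faces of $\Phi$ sharing the index) matches the paper in spirit. Two smaller omissions: the paper first rules out the degenerate "bad" elementary-spherical case using $\mathsf D_2$ and $\mathsf D_3$ directly, and it verifies that the failure of $\mathsf Z(2)$ for $\Phi$ persists after passing to $\bar\Phi$, which is what licenses the use of the $\kappa(\Theta)\le2$, $\kappa'(\Theta)=0$ data. The overall strategy of your proposal (contradiction via an Euler-type edge count, First and Second Estimating Lemmas feeding the $\mathsf D$-conditions) is the right one; what is missing is the specific mechanism --- closure plus an extended $S_1$-structure on $\Theta$ --- that turns the hypothesised failure of $\mathsf Z(2)$ into the numerical input $\kappa(\Theta)\le2$, $\kappa'(\Theta)=0$ that makes the estimate close.
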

\begin{proof}
This lemma shall be proved by contradiction, so suppose that
$\Delta$ does not satisfy $\mathsf Z(2)$ relative to $\Phi$.

Call an $S_1$-map ``\emph{bad\/}'' if it is elementary spherical
and every nontrivial reduced path in the characteristic boundary of each
of its two faces is selected.
One reason why ``bad'' $S_1$-maps are
inconvenient is that distinct maximal selected arcs in them
can overlap.
More importantly, the First Estimating Lemma does not apply to them.
Observe that $\Delta$ cannot be ``bad.''
Indeed, if $\Delta$ were ``bad,'' then
$\Phi$ would consist of a single face, and for this face $\Pi$,
the conditions $\mathsf D_2(\mu)$ and $\mathsf D_3(\nu)$ would imply
that
$$
\abs{\cntr\Pi}\le(\mu(\Pi)+\nu(\Pi))\abs{\cntr\Pi}
\le\frac{1}{4}\abs{\cntr\Pi}<\abs{\cntr\Pi}.
$$

Let $A$ be the set of all maximal selected arcs of $\Delta$
that are in $\Phi$,
and $B$ the set of all exceptional arcs of $\Delta$
that are in $\Phi$.
Then every element of $B$ is a subarc of an element of~$A$.
Let $L$ be the total number of edges of $\Phi$
that are not edges of elements of $A$,
$M$ be the total number of edges of elements of $A$
that are not edges of elements of $B$, and
$N$ be the total number of edges of all elements
of $B$.
To achieve a contradiction, it shall be shown that
$$
L+M+N\le\frac{1}{2}\sum_{\Pi\in\Phi(2)}\abs{\cntr\Pi}<\norm{\Phi(1)}.
$$

The condition $\mathsf D_1(\lambda)$ gives immediately
the following estimate on $L$:
$$
L\le\sum_{\Pi\in\Phi(2)}\!\lambda(\Pi)\abs{\cntr\Pi}.
$$

To find a good estimate on $M$, consider
the closure $\bar\Phi$ of $\Phi$
($\bar\Phi$ is a spherical map).
Denote $\Theta$ the face of $\bar\Phi$ that is not in $\Phi$
(the ``outer'' face).
Extend the structure of an $S_1$-map from $\Phi$ to $\bar\Phi$
as follows:
select those paths in the characteristic
boundary of $\Theta$ whose images in $\Phi$
coincide with the images of selected paths from the characteristic
boundaries of faces that are in $\Delta(2)\setminus\Phi(2)$.
Then $\bar\Phi$ satisfies $\mathsf Z(2)$
relative to every proper simple disc submap of $\Phi$,
but not relative to $\Phi$ itself, since so does $\Delta$.
In particular, $\kappa(\Theta)\le2$ and $\kappa'(\Theta)=0$.
Observe that $\bar\Phi$ is not ``bad'' because otherwise $\Delta$
would be ``bad'' as well, and that
$A$ is exactly the set of all maximal selected arcs of $\bar\Phi$.
Apply Lemma \ref{lemma:estimating_lemma.selected_arcs.(4.3)}
(First Estimating Lemma)
to $\bar\Phi$,
$\{\Theta\}$ (in the role of the ``set $C$''), and
$\Phi(2)$ (in the role of the ``set $D$'').
Let $f$ be a function $A\to\Phi(2)$ such that:
\begin{enumerate}
\item
    for every $u\in A$, the face $f(u)$ is incident to the arc $u$, and
\item
    for every $\Pi\in\Phi(2)$,
    $\norm{f^{-1}(\Pi)}\le3+\kappa(\Pi)+\kappa'(\Pi)$.
\end{enumerate}
(Since
$3+\kappa(\Theta)+\kappa'(\Pi)-3\chi(\bar\Phi)\le-1\le0$,
the ``set $E$'' is empty.)
By the condition $\mathsf D_2(\mu)$,
$$
M=\sum_{\Pi\in\Phi(2)}\sum_{u:f(u)=\Pi}\!\abs{u}
\le\sum_{\Pi\in\Phi(2)}\!
(3+\kappa(\Pi)+\kappa'(\Pi))\mu(\Pi)\abs{\cntr\Pi}.
$$

It is left to estimate $N$.
Let $B'$ be the set of those arcs from $B$
that are internal in $\Phi$,
and $B''$ the set of those arcs from $B$
that are external in $\Phi$.
For every $i$, let $C_i$ be the set of all index-$i$ faces of $\Phi$,
and $B_i$, $B_i'$, and $B_i''$ be the sets of all index-$i$
elements of $B$, $B'$, and $B''$, respectively.
As in Lemma \ref{lemma:estimating_lemma.exceptional_arcs.(4.4)}
(Second Estimating Lemma),
for every $i$, let $\delta_i=1$ if $B_i''\ne\varnothing$,
and $\delta_i=0$ otherwise.
Let
$$
I=\{\,i\,\mid\,C_i\ne\varnothing\,\}.
$$
By Lemma \ref{lemma:estimating_lemma.exceptional_arcs.(4.4)}
applied to $\Phi$,
$$
\norm{B_i'}\le2\norm{C_i}-1-\delta_i
$$
for every $i\in I$.
It follows now from the condition $\mathsf D_3(\nu)$ that
$$
\sum_{u\in B_i'}\abs{u}
\le(2\norm{C_i}-1-\delta_i)\min_{\Pi\in C_i}\nu(\Pi)\abs{\cntr\Pi},
$$
and
$$
\sum_{u\in B_i''}\abs{u}
\le2\delta_i\min_{\Pi\in C_i}\nu(\Pi)\abs{\cntr\Pi}
$$
for every $i\in I$.
(The constant $2$ in the second inequality
comes from the assumption that
$\Delta$ does not satisfy $\mathsf Z(2)$ relative to $\Phi$.)
Therefore,
$$
\sum_{u\in B_i}\abs{u}
\le2\norm{C_i}\min_{\Pi\in C_i}\nu(\Pi)\abs{\cntr\Pi}
\le\sum_{\Pi\in C_i}2\nu(\Pi)\abs{\cntr\Pi}
$$
for every $i$, and hence
$$
N=\sum_{u\in B}\abs{u}\le\sum_{\Pi\in\Phi(2)}\!2\nu(\Pi)\abs{\cntr\Pi}.
$$

Thus,
\begin{align*}
\norm{\Phi(1)}&=L+M+N\\
&\le\sum_{\Pi\in\Phi(2)}\!
\bigl(\lambda(\Pi)
+(3+\kappa(\Pi)+\kappa'(\Pi))\mu(\Pi)
+2\nu(\Pi)\bigr)\abs{\cntr\Pi}\\
&\le\sum_{\Pi\in\Phi(2)}\frac{1}{2}\abs{\cntr\Pi}
<\norm{\Phi(1)},
\end{align*}
which gives a contradiction.
\end{proof}

The following lemma follows from Lemmas
\ref{lemma:estimating_lemma.selected_arcs.(4.3)},
\ref{lemma:estimating_lemma.exceptional_arcs.(4.4)},
\ref{lemma:inductive_lemma.(4.5)}
and is a slightly simplified version of Lemma 59
in~\cite{Muranov:2007:fgisgicw}.

\begin{lemma}
\label{lemma:useful_lemma.generic.(4.6)}
Let\/ $\Delta$ be a connected\/ $S$-map with\/ $n$ contours such that\/
$n+3\chi(\Delta)\ge0$\textup.
Suppose\/ $\Delta$ satisfies the conditions\/ $\mathsf Y$
and\/ $\mathsf D(\lambda,\mu,\nu)$ relative to itself\textup,
where\/ $\lambda,\mu,\nu\colon\Delta(2)\to[0,1]$\textup.
Let\/ $\gamma=\lambda+(3+\kappa+\kappa')\mu+2\nu$\textup, and
suppose\/ $\gamma(\Pi)\le1/2$ for every\/ $\Pi\in\Delta(2)$\textup.
Then
$$
\sum_i\abs{\cntr_i\Delta}
\ge\sum_{\Pi\in\Delta(2)}(1-2\gamma(\Pi))\abs{\cntr\Pi}.
$$
\end{lemma}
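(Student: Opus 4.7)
The plan is to adapt the contradiction-based argument of the Inductive Lemma (Lemma \ref{lemma:inductive_lemma.(4.5)}) into a direct quantitative estimate on $\Delta$ itself. First, by induction on the number of faces of simple disc submaps of $\Delta$ combined with repeated application of the Inductive Lemma, one establishes that $\Delta$ satisfies the condition $\mathsf Z(2)$ relative to every simple disc submap of itself; this works because the hypotheses $\mathsf Y$ and $\mathsf D(\lambda,\mu,\nu)$ that $\Delta$ satisfies relative to itself restrict to the same relative-to-$\Delta$ conditions on any submap, and the inductive hypothesis supplies $\mathsf Z(2)$ relative to every proper disc subsubmap. This step ensures the hypothesis of the First Estimating Lemma is satisfied when it is applied to $\Delta$.

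The core of the argument is to bound the number $e_{\mathrm{in}}$ of internal edges of $\Delta$. Since selected arcs and exceptional arcs are internal by definition, the internal edges partition as $e_{\mathrm{in}} = L^{\mathrm{in}} + M + N$, where $L^{\mathrm{in}}$ consists of internal edges not on any maximal selected arc, $M$ of internal edges on a maximal selected arc but not on an exceptional one, and $N$ of internal edges on an exceptional arc. By $\mathsf D_1$, each edge in $L^{\mathrm{in}}$ contributes at least one non-selected face-side to $\sum_\Pi L_\Pi$, whence $L^{\mathrm{in}} \leq \sum_\Pi \lambda(\Pi)\abs{\cntr\Pi}$. Applying the First Estimating Lemma (Lemma \ref{lemma:estimating_lemma.selected_arcs.(4.3)}) to $\Delta$ with $C=\varnothing$ and $D=\Delta(2)$ produces a distribution $f\colon A\to\Delta(2)$ of the maximal selected arcs with $\abs{f^{-1}(\Pi)} \leq 3+\kappa(\Pi)+\kappa'(\Pi)$; here the hypothesis $n+3\chi(\Delta)\geq 0$ is precisely what forces the exceptional set of that lemma to be empty. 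Combining with $\mathsf D_2$ gives $M \leq \sum_\Pi(3+\kappa+\kappa')\mu(\Pi)\abs{\cntr\Pi}$. Applying the Second Estimating Lemma (Lemma \ref{lemma:estimating_lemma.exceptional_arcs.(4.4)}) to $\Delta$ relative to itself (with $\delta_i=0$ for every $i$, since exceptional arcs are internal), and then invoking $\mathsf D_3$ exactly as in the proof of the Inductive Lemma, yields $N \leq \sum_\Pi 2\nu(\Pi)\abs{\cntr\Pi}$.

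Summing the three bounds gives $e_{\mathrm{in}} \leq \sum_\Pi\gamma(\Pi)\abs{\cntr\Pi}$. The face-side counting identity $\sum_\Pi\abs{\cntr\Pi} = 2e_{\mathrm{in}} + \sum_i\abs{\cntr_i\Delta}$ (each internal edge contributes two face-sides, each contour edge one) rearranges to the desired conclusion
$$
\sum_i\abs{\cntr_i\Delta} = \sum_\Pi\abs{\cntr\Pi} - 2e_{\mathrm{in}} \geq \sum_\Pi\bigl(1-2\gamma(\Pi)\bigr)\abs{\cntr\Pi}.
$$
The main delicate point will be the bound on $N$ when $\chi(\Delta)<0$ (still permitted by $n+3\chi(\Delta)\geq 0$ for $n$ large enough): the Second Estimating Lemma then yields only $\abs{A_i}\leq 2\abs{B_i}-\chi(\Delta)$ per index, leaving an excess of up to $-\chi(\Delta)$ exceptional arcs that do not fit into the clean $\sum_\Pi 2\nu(\Pi)\abs{\cntr\Pi}$ accounting. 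Absorbing this excess via the ``Furthermore'' clause of that lemma (which produces a global exceptional set of size $\leq-\chi(\Delta)$), coordinated if necessary with a nonempty choice of $C$ in the First Estimating Lemma so that the extra arcs can still be charged against the $\gamma\leq 1/2$ budget, is the technical hurdle; the degenerate cases—an elementary spherical $\Delta$, or faces whose every reduced characteristic path is selected—are handled as in the proof of the Inductive Lemma.
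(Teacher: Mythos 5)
The paper gives no proof of this lemma, only the remark that it follows from Lemmas \ref{lemma:estimating_lemma.selected_arcs.(4.3)}, \ref{lemma:estimating_lemma.exceptional_arcs.(4.4)}, \ref{lemma:inductive_lemma.(4.5)}, so you are not competing against a written argument. Your overall strategy --- induction with the Inductive Lemma to get $\mathsf Z(2)$ for all simple disc submaps, then both Estimating Lemmas applied to $\Delta$ with the $\mathsf D$-bounds, then the face-side count $\sum_{\Pi}\abs{\cntr\Pi}+\sum_i\abs{\cntr_i\Delta}=2\norm{\Delta(1)}$ --- is clearly the intended one, and the observations that $D=\Delta(2)$, $C=\varnothing$ make the exceptional set in Lemma~\ref{lemma:estimating_lemma.selected_arcs.(4.3)} empty precisely because $n+3\chi(\Delta)\ge0$, and that all $\delta_i=0$ in Lemma~\ref{lemma:estimating_lemma.exceptional_arcs.(4.4)}, are both correct.

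But two steps do not hold as written. First, you claim that the hypothesis ``$\Delta$ satisfies $\mathsf Y$ relative to itself'' restricts to give ``every simple disc submap $\Phi$ satisfies $\mathsf Y$ relative to $\Delta$.'' That is what the Inductive Lemma actually requires, and it is not a restriction of the given hypothesis: $\mathsf D$ does restrict, being a per-face and per-arc condition, but $\mathsf Y$ is a count over the components $K_i$ of $\Gamma$ with its index-$i$ faces and arcs deleted, and passing from $\Gamma=\Delta$ to a proper disc submap $\Gamma=\Phi$ changes that component set entirely --- deleting faces to form $\Phi$ can create new Euler-characteristic-$1$ pieces. In the paper this is handled by proving separately (Lemma~\ref{lemma:condition_Y.(5.3)}) that every convenient connected subdiagram satisfies $\mathsf Y$, and only then invoking the Inductive Lemma; your abstract version needs that stronger input stated. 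Second, you explicitly leave open the case $\chi(\Delta)<0$, which the hypothesis allows only when $n=3$ and $\chi(\Delta)=-1$: the Second Estimating Lemma then hands you one excess exceptional arc whose length is not obviously chargeable to $\sum_\Pi 2\nu(\Pi)\abs{\cntr\Pi}$, and ``coordinate with a nonempty $C$'' is a plan, not a proof. Every application of the lemma in the paper (disc, annular, spherical diagrams) has $\chi\ge0$, so both exceptional sets vanish and your argument is complete there; if you want the lemma exactly as stated you must either close the $\chi=-1$ case or point out that the hypothesis could safely be tightened to $\chi(\Delta)\ge0$.
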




\section{Proof of the main theorem}
\label{section:proof_main_theorem}

In this section, $G$ is the (simple or trivial) group whose
presentation \tGP{a,b}{\mathcal R} is constructed in
Section \ref{section:presentation_construction}.

Every diagram over \tGP{a,b}{\mathcal R} shall be automatically
endowed with a structure of an $S$-map as mentioned in
Section \ref{section:technical_lemmas},
see Figure \ref{figure:4}.
Namely, if $\Delta$ is a diagram over \tGP{a,b}{\mathcal R},
then
\begin{enumerate}
\item
	if $\Pi$ is a face of $\Delta$ and $\ell(\cntr{}\Pi)=r_n$,
	then the index of $\Pi$ shall be $n$, denoted $\iota(\Pi)=n$,
	and a path in the characteristic boundary of $\Pi$
	shall be selected if and only if it is a nontrivial subpath
	of one of the $4k_n$ paths
	corresponding to the subwords
	$u_{n,1}^{\pm1},\dotsc,u_{n,k_n}^{\pm1}$ of $r_n^{\pm1}$;
	note that $\kappa(\Pi)=\kappa_n=2k_n$;
\item
	an arc of $\Delta$ shall be exceptional if and only if
	it is a maximal selected arc which
	corresponds on both sides to the same part
	of some word $u_{n,j}$
	(in particular, the incident faces must have the same index $n$ and
	the label $r_n$).
\end{enumerate}

Call a diagram over \tGP{a,b}{\mathcal R} \emph{convenient\/}
if it is weakly strictly reduced
and every exceptional selected arc in it corresponds to an entire
word $u_{n,j}$ (on both sides).

\begin{lemma}
\label{lemma:convenient_diagrams.(5.1)}
Every diagrammatically reduced diagram
over\/ \tGP{a,b}{\mathcal R}
can be transformed into a convenient one
by a sequence of diamond moves\textup.
\end{lemma}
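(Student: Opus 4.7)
The plan is to iteratively apply diamond moves that extend each exceptional arc until it spans an entire word $u_{n,j}$, using the hypothesis of diagrammatic reducedness to prevent the extension from triggering an impermissible cancellation. The underlying invariant, which I would establish first, is that diagrammatic reducedness is preserved under diamond moves: since such moves preserve the set of faces, a diagrammatically cancelable pair in any diagram obtained from $\Delta_0$ by diamond moves would, by appending further moves, exhibit the corresponding pair as diagrammatically cancelable in $\Delta_0$ itself, contradicting the hypothesis. In particular, every diagram appearing during the procedure is weakly strictly reduced.

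Next I would describe the extension step. Let $s$ be an exceptional selected arc of the current diagram $\Delta$, incident to faces $\Pi_1, \Pi_2$ both labeled $r_n$ and corresponding on both sides to a proper subpath of some $u_{n,j}$. Pick an endpoint $v$ of $s$ at which the copy of $u_{n,j}$ is not yet exhausted, and let $e_i$ be the edge on $\cntr\Pi_i$ lying just beyond $v$ along $u_{n,j}$; then $\ell(e_1)=\ell(e_2)$, equal to the next letter of $u_{n,j}$. If $e_1=e_2$ as edges of $\Delta$, then $s$ would not have been a maximal selected arc, a contradiction; otherwise, orient so that $v$ is the common terminal vertex and apply the diamond move along $e_1, e_2$.

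Using Lemma~\ref{lemma:diamond_move.(3.4)}, the outcome is proper, untwisting, or disconnecting. In the proper case the shared arc $s$ is extended by at least one edge with no other disruption. A disconnecting move producing a new spherical component consisting only of $\Pi_1$ and $\Pi_2$ would immediately display $\{\Pi_1,\Pi_2\}$ as a diagrammatically cancelable pair of $\Delta$, contradicting the invariant; the remaining non-proper subcases similarly yield either a genuine extension of $s$ or such a contradiction. The main obstacle will be precisely this case analysis when $e_1, e_2$ are loops or share vertices other than $v$: in each permitted outcome of the diamond move one must verify that the exceptional arc genuinely enlarges and that no immediately strictly cancelable pair is inadvertently introduced.

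Finally, termination follows by monotonicity: the sum of lengths of all exceptional arcs is a nonnegative integer bounded above by $\norm{\Delta(1)}$ (which is invariant under diamond moves) and is strictly increased by each extension step. When the procedure halts, every exceptional arc must cover a full $u_{n,j}$, since a maximal selected arc cannot extend past the end of its $u$-subword without ceasing to be selected. The terminal diagram is thus both weakly strictly reduced and has each of its exceptional arcs equal to a full $u_{n,j}$ on both sides, hence is convenient.
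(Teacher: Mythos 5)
Your overall strategy is the same as the paper's one-line sketch: preserve diagrammatic reducedness as an invariant and grow each exceptional arc by diamond moves until it covers a full copy of $u_{n,j}$, invoking Lemma~\ref{lemma:diamond_move.(3.4)} to control the moves.

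The gap is in the termination argument. The claim that ``the sum of lengths of all exceptional arcs \dots\ is strictly increased by each extension step'' is false in general. Suppose the edge $e_1$ beyond $v$ on $\cntr\Pi_1$ already lies on another exceptional arc $t$ (between $\Pi_1$ and some $\Pi_1'$, at position $q+1$ of $u_{n,j}$), and likewise $e_2$ lies on an exceptional arc $t'$ between $\Pi_2$ and $\Pi_2'$. After the diamond move, $s$ gains the new edge $f_2$ (between $\Pi_1,\Pi_2$), but $t$ and $t'$ each lose one edge, and the companion new edge $f_1$ (between $\Pi_1'$ and $\Pi_2'$) is itself exceptional at the same position of $u_{n,j}$; the total length of exceptional arcs is exactly unchanged. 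So your measure is only weakly monotone, and by itself does not rule out an infinite run. One way to repair this: fix a single proper-subword exceptional arc $t$ and run the extension moves on $t$ alone until $t$ covers all of $u_{n,j}$; this sub-loop terminates because $|t|$ itself does increase at each of its steps. Then observe that a full exceptional arc is never shortened by any subsequent extension step, since if such a step removed an edge $e$ from a full arc, $e$ and the arc currently being extended would occupy overlapping positions of the same $u$-occurrence on the shared face's contour, contradicting disjointness of arcs. The number of full exceptional arcs is therefore bounded, never decreases, and strictly increases after each completed round, which gives termination.

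A secondary caution: the claim that $e_1=e_2$ contradicts maximality of $s$ is correct when $e_1$ is not a loop (then $v$ would have degree $2$ and $s\cup\{e_1\}$ would be a longer selected arc), but if $e_1=e_2$ is a loop at $v$, then $v$ already has degree at least $3$, $s$ is genuinely maximal, and the diamond move is simply unavailable for that pair of oriented edges; that degenerate configuration needs a separate treatment rather than an appeal to maximality.
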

\begin{proof}
Apply diamond moves to ``extend'' one by one
all exceptional arcs that correspond to proper subwords
of some $u_{n,j}$, and see Lemma \ref{lemma:diamond_move.(3.4)}.
\end{proof}

If $\Delta$ is a diagram over \tGP{a,b}{\mathcal R}, $\bar\Delta$
is the closure of $\Delta$,
and all contour labels of $\Delta$ are cyclically reduced
concatenations of copies of $z_1^{\pm1}$ and $z_2^{\pm1}$,
then the existing structure of an $S$-map
on $\Delta$ shall be automatically
extended to a structure of an $S$-map on $\bar\Delta$
as follows.
If $\Theta$ is any of the ``outer'' faces of $\bar\Delta$
(a face that is not in $\Delta$),
then assign to $\Theta$ the index of $-1$
(to distinguish it from ``inner'' faces)
and choose all nontrivial
reduced paths in the characteristic boundary of $\Theta$ as selected
(note in particular that $\kappa(\Theta)=0$).
There shall be no additional exceptional arcs in~$\bar\Delta$.

If $\Delta$ is a diagram over \tGP{a,b}{\mathcal R},
then let functions
$\lambda,\mu,\nu,\gamma\colon\Delta(2)\to[0,1]$ be
defined as follows:
\begin{gather*}
\lambda(\Pi)=\lambda_{\iota(\Pi)},\qquad
\mu(\Pi)=\mu_{\iota(\Pi)},\qquad
\nu(\Pi)=\nu_{\iota(\Pi)},\\
\gamma(\Pi)
=\lambda(\Pi)+(3+2\kappa(\Pi))\mu(\Pi)+2\nu(\Pi)
=\gamma_{\iota(\Pi)}
\end{gather*}
for every $\Pi\in\Delta(2)$.
Observe that
$$
\gamma_\Delta(\Pi)<\frac{1}{2}
$$
for every $\Pi\in\Delta(2)$
by condition
(C\ref{item:presentation_construction.main_properties.8})
in Section~\ref{section:presentation_construction}.

\begin{lemma}
\label{lemma:condition_D.(5.2)}
Every convenient diagram\/ $\Delta$
over\/ \tGP{a,b}{\mathcal R}
satisfies the condition\/
$\mathsf D(\lambda,\mu,\nu)$
relative to itself\textup.
If all contour labels of\/ $\Delta$
are cyclically reduced concatenations
of copies of\/ $z_1^{\pm1}$ and\/ $z_2^{\pm1}$\textup,
then\/ $\Delta$ also satisfies\/
$\mathsf D(\lambda,\mu,\nu)$
relative to its closure\textup.
\end{lemma}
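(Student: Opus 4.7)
The plan is to check each of the three clauses of $\mathsf D(\lambda,\mu,\nu)$ in turn, with each clause translating directly into one of the numerical small-cancellation-style conditions (C\ref{item:presentation_construction.main_properties.3})--(C\ref{item:presentation_construction.main_properties.6}) built into the presentation.

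For $\mathsf D_1(\lambda)$, I would fix a face $\Pi$ of $\Delta$ with $\iota(\Pi)=n$. By the definition of the $S$-structure on $\Delta$, the edges of $\cntr\Pi$ lying on some selected path are precisely those reading one of the $2k_n$ subwords $u_{n,i}^{\pm1}$ of $r_n$, so the remaining edges total at most $\abs{r_n}-2(\abs{u_{n,1}}+\dots+\abs{u_{n,k_n}})\le\lambda_n\abs{r_n}=\lambda(\Pi)\abs{\cntr\Pi}$ by (C\ref{item:presentation_construction.main_properties.3}). For $\mathsf D_3(\nu)$, any simple path $p$ in $\Delta$ which is the image of a selected path in $\cntr\Pi$ arises from a subpath of some $u_{n,i}^{\pm1}$, so $\abs{p}\le\nu_n\abs{r_n}$ by (C\ref{item:presentation_construction.main_properties.4}); since exceptional arcs are non-overlapping, the total length $N$ of those lying on $p$ is at most $\abs{p}\le\nu_n\abs{r_n}=\nu(\Theta)\abs{\cntr\Theta}$ for every $\Theta$ of index $n$.

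For $\mathsf D_2(\mu)$, I would take a maximal selected internal arc $u$ incident to $\Pi$ (index $n$) and to a face $\Pi'$ (index $n'$). On $\Pi$'s side, $u$ reads a subword of some $u_{n,i}^{\sigma}$ at a definite position, while on $\Pi'$'s side it reads (in the opposite direction) a subword of some $u_{n',i'}^{\sigma'}$. By (C\ref{item:presentation_construction.main_properties.5}), either these occurrences agree as tuples or $\abs{u}\le\min(\mu_n\abs{r_n},\mu_{n'}\abs{r_{n'}})$. In the former case $u$ itself fulfils the definition of an exceptional arc; since distinct maximal selected arcs do not overlap, $u$ coincides with that exceptional arc, and convenience forces it to span an entire copy of some $u_{n,j}$, so every edge of $u$ lies on the exceptional arc $u$ itself and $M=0$. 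In the latter case $u$ is non-exceptional and, again by maximality, contains no exceptional sub-arc, giving $M=\abs{u}\le\mu(\Pi)\abs{\cntr\Pi}$.

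For the second assertion I would run the same verification with $\bar\Delta$ in the role of the ambient $S$-map. Clause $\mathsf D_1$ concerns only the faces of $\Gamma=\Delta$ and is therefore unchanged. Clause $\mathsf D_3$ remains valid when $\Pi$ is an inner face by the argument above, and is vacuous when $\Pi$ is an outer face of $\bar\Delta$, since each such face carries index $-1$ while no face of $\Delta$ does. The genuinely new case is $\mathsf D_2$ for an arc $u$ lying on a contour of $\Delta$ that has become internal in $\bar\Delta$: it separates an inner face $\Pi$ of index $n$ from an outer face of index $-1$ and so cannot be exceptional, and since $u$ simultaneously reads a subword of some $u_{n,i}^{\pm1}$ and (up to inversion) a subword of a cyclically reduced concatenation of $z_1^{\pm1}, z_2^{\pm1}$, condition (C\ref{item:presentation_construction.main_properties.6}) bounds $\abs{u}$, and hence $M$, by $\mu_n\abs{r_n}=\mu(\Pi)\abs{\cntr\Pi}$. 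The most delicate point of the whole argument is the exceptional-arc bookkeeping in $\mathsf D_2$ and the role played there by the convenience hypothesis; everything else amounts to transcribing the numerical inequalities built into the presentation.
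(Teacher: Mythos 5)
Your proposal is correct and proceeds exactly along the lines the paper intends: it verifies $\mathsf D_1$, $\mathsf D_3$, $\mathsf D_2$ one by one using (C\ref{item:presentation_construction.main_properties.3}), (C\ref{item:presentation_construction.main_properties.4}), (C\ref{item:presentation_construction.main_properties.5}), and, for the closure statement, (C\ref{item:presentation_construction.main_properties.6}); the paper's own proof is a one-line citation of these four conditions, and you have supplied the routine verification. One small correction: you present the convenience hypothesis (``every exceptional arc spans a full $u_{n,j}$'') as load-bearing in the $\mathsf D_2$ bookkeeping, but it is not — once a maximal selected arc is identified as exceptional, $M=0$ immediately because its edges lie on that very exceptional arc, whether or not the arc covers an entire $u_{n,j}$. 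Convenience plays no role in this lemma (the paper needs it for condition $\mathsf Y$, not $\mathsf D$), so the ``most delicate point'' you flag at the end is actually a non-issue here.
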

\begin{proof}
This lemma follows from conditions
(C\ref{item:presentation_construction.main_properties.3}),
(C\ref{item:presentation_construction.main_properties.4}),
(C\ref{item:presentation_construction.main_properties.5}),
(C\ref{item:presentation_construction.main_properties.6})
in Section~\ref{section:presentation_construction}.
\end{proof}

\begin{lemma}
\label{lemma:condition_Y.(5.3)}
Let\/ $\Delta$ be a convenient connected
subdiagram of some diagram
over\/ \tGP{a,b}{\mathcal R}\textup, and let\/ $J$
be the set of indices of faces of\/ $\Delta$
\textup($J\subset I$\textup)\textup.
Suppose that for every proper subset\/ $J'\subsetneqq J$\textup,
the presentation\/ \tGP{a,b}{r_i,\ i\in J'}
defines a torsion-free group\textup.
Then\/ $\Delta$ satisfies the condition\/ $\mathsf Y$
relative to the ambient diagram\textup.
\end{lemma}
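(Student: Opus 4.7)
I plan to prove this by contradiction. Suppose the condition $\mathsf Y$ fails: there exists an index $i\in J$ with $A_i\ne\varnothing$ such that the number of ``special'' components of $K_i$---those of Euler characteristic~$1$, or those containing an index-$i$ exceptional arc of the ambient diagram incident to a face of $\Delta$---strictly exceeds~$\norm{B_i}$.

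First I would examine the ``island'' subcomplex $\Omega_i:=\bigcup_{\Pi\in B_i}\Pi\cup\bigcup_{\alpha\in A_i}\alpha$. Because $\Delta$ is convenient, every arc of $A_i$ is the image of an entire word $u_{i,j}$ on both sides, so the boundary of each connected component of $\Omega_i$ inside $\Delta$ decomposes as a cyclic alternation of segments labeled by $u$-subwords (lying on $\partial\Delta$ or external to $\Omega_i$) and by the ``non-$u$'' pieces of $r_i$ (namely $v$- and $w_i^{-m_ik_i}$-segments when $r_i$ is of the second kind, and $w_i$- and $x_i^{-1}$-segments when first-kind). Each disc component $\Psi\in K_i^*$ of Euler characteristic~$1$ is a disc submap of~$\Delta$ whose faces all have indices in $J\setminus\{i\}$, so by Lemma~\ref{lemma:diagrams.(3.6)}(\ref{item:lemma.diagrams.1}) its contour label represents the identity in the torsion-free group $G_0:=\GP{a,b}{r_j,\,j\in J\setminus\{i\}}$.

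Next, I would reassemble these ingredients into a ``test diagram'' $\Sigma$ over $\GP{a,b}{\mathcal R}$: glue each disc cap from $K_i^*$ back along the matching portion of $\partial\Omega_i$, and absorb the non-disc components of $K_i$ (which by Corollary~\ref{corollary.lemma:maps__positive_Euler_characteristic.(3.2)} all have Euler characteristic $\le 0$) into the ``outer'' region. An Euler-characteristic accounting using Lemma~\ref{lemma:regularization.(3.3)} and Corollary~\ref{corollary.lemma:maps__positive_Euler_characteristic.(3.2)} then shows that the strict inequality $\norm{K_i^*}>\norm{B_i}$ forces, after diamond-move reduction, a disc diagram $D$ over the subpresentation $\GP{a,b}{r_j,\,j\in J\setminus\{i\}}$ whose contour label is a cyclic word built from the non-$u$ pieces of~$r_i$ and encodes a nontrivial power $[r_i]^{t}$ (modulo conjugacy) in~$F$. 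Since $r_i$ is not a proper power in $F$ by condition~(C\ref{item:presentation_construction.main_properties.12}) and since $G_0$ is torsion-free, the relation $[r_i]^t=1$ in $G_0$ forces $[r_i]=1$ in $G_0$, so that $r_i$ becomes a consequence of $\{r_j:j\in J\setminus\{i\}\}$.

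The final contradiction comes from the construction of~$\mathcal R$ in Section~\ref{section:presentation_construction}: for a first-kind relator, the index $i$ would have been discarded from $I$ by the inductive definition of the index set, while for a second-kind relator the conclusion $[r_i]=1$ in $G_0$, combined with condition~(C\ref{item:presentation_construction.main_properties.11}) and the rigid small-cancellation structure of $r_i$, yields the analogous impossibility. The main obstacle will be executing Step~2 cleanly---producing the diagram~$D$ and verifying that the rigid interleaving of the $u_{i,j}$-conjugators with the non-$u$ pieces of~$r_i$ prevents trivial cancellations in the extracted contour label, so that the resulting relation is indeed a nontrivial power of~$[r_i]$ in~$G_0$ rather than a tautology.
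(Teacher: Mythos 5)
Your approach diverges from the paper's at the decisive step, and the divergence creates real gaps. The paper proves $\mathsf Y$ by a counting device your proposal is missing: on the characteristic boundary of each index-$i$ face it marks the $k_i$ initial vertices of the conjugate blocks $s_jt_js_j'$, producing $k_i\norm{B_i}$ marked vertices, each landing in some $\Psi\in K_i$; because $\Delta$ is convenient the number landing in any fixed $\Psi$ is divisible by $k_i$, and the paper shows every ``special'' $\Psi$ receives at least one, hence at least $k_i$, of them, giving the bound $\norm{B_i}$ on the number of special components. Without this $k_i$-divisibility, knowing that a bad component meets at least one marked point does not close the count, and your Euler-characteristic accounting in Step~2 supplies no substitute for it.

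The more serious problem is what you claim to extract. A disc component $\Psi\in K_i$ of Euler characteristic $1$ touching no marked vertex has contour label (up to cyclic shift and inversion) $w^n$ with $n\ne0$, where $w$ is the \emph{inner core word} $w_i$ (first kind) or $v$ (second kind), since $\Psi$ is bounded by the non-$u$ pieces $t_j$ of the $r_i$-contours --- it is not a power of $r_i$, modulo conjugacy or otherwise. The paper then uses torsion-freeness to get $[w]=1$ in \tGP{a,b}{r_j,\ j\in J\setminus\{i\}}, applies Lemma~\ref{lemma:useful_lemma.generic.(4.6)} to a small disc diagram for $w$, and via (C\ref{item:presentation_construction.main_properties.9})--(C\ref{item:presentation_construction.main_properties.10}) rules out $w=v$ and forces $[w_i]=1$ in \tGP{a,b}{r_n,\ n\in I, n<i}, contradicting the inductive definition of $I$. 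Your appeal to (C\ref{item:presentation_construction.main_properties.12}) on $[r_i]^t$ therefore applies to the wrong word, and your closing contradiction is broken for second-kind relators: the index set $I$ never discards second-kind indices, so ``$[r_i]=1$ in $G_0$'' contradicts nothing there, and (C\ref{item:presentation_construction.main_properties.11}) is about distinct normal closures in $F$, not about $[r_i]$ lying in the normal closure of the remaining relators. Finally, $\mathsf Y$ also counts components of $K_i$ that merely meet an external index-$i$ exceptional arc incident to a face of $\Delta$; such components can have $\chi\le 0$, and your ``absorb the non-disc components into the outer region'' step silently discards precisely the case you still need to handle.
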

\begin{proof}
Without loss of generality (or inducting on $J$),
assume that for every proper subset $J'\subsetneqq J$,
every convenient connected diagram over
\tGP{a,b}{r_i,\ i\in J'}
satisfies $\mathsf Y$ relative to itself.

Let $i$ be the index of an arbitrary internal exceptional arc of $\Delta$.
Let $A_i$ be the set of all internal exceptional arcs of $\Delta$
of index $i$,
and $B_i$ be the set of all faces of $\Delta$ of index $i$.
Let $K_i$ be the set of all connected components of
the subcomplex obtained from $\Delta$
by removing all faces that are in $B_i$
and all arcs that are in $A_i$.
View each element of $K_i$ as a subdiagram of~$\Delta$.

Let
$\hat J_i=J\setminus\{i\}$.
Then the group presented by \tGP{a,b}{r_i,\ i\in \hat J_i} is torsion-free,
and every element of $K_i$ is a diagram
over \tGP{a,b}{r_i,\ i\in \hat J_i}.

Let $L$ be the set of vertices of the characteristic boundaries
of index-$i$ faces chosen as follows:
if $\Pi\in B_i$, then let
$s_1,\dotsc,s_{k_i},s_1',\dotsc,s_{k_i}',t_1,\dotsc,t_{k_i},t_0$
be the paths in the characteristic boundary of $\Pi$
such that the concatenation
$s_1t_1s_1's_2t_2s_2'\dotsm s_{k_i}t_{k_i}s_{k_i}'t_0$
is the characteristic contour of $\Pi$, and
the label of $s_j$ is $u_{i,j}$ and
the label of $s_j'$ is $u_{i,j}^{-1}$ for $j=1,\dotsc,k_i$.
Let $L$ contain the initial vertices of the $k_i$ paths
$s_1t_1s_1',\dotsc,s_{k_i}t_{k_i}s_{k_i}'$ and no other
vertices of the characteristic boundary of $\Pi$.
Note that $\norm{L}=k_i\norm{B_i}$.

The image (under the corresponding attaching morphism)
of each element of $L$
is a vertex of an element of $K_i$.
It suffices to prove now that for every $\Psi\in K_i$
such that $\chi(\Psi)=1$ and
for every $\Psi\in K_i$ such that $\Psi$ contains an index-$i$
exceptional arc of the ambient diagram incident to a face of $\Delta$,
the number of elements of $L$ whose images are in $\Psi$ is
at least~$k_i$.

It follows from the definition of
exceptional arcs in a convenient diagram that for every
$\Psi\in K_i$, the number of elements of $L$ whose images are in $\Psi$
is divisible by $k_i$.
An example of a possible situation is
shown on Figure \ref{figure:5}
(under the assumption that $i=1$ and $k_1=4$),
elements of $L$ are marked there with thick dots.
\begin{figure}\centering
\includegraphics[scale=1.1]{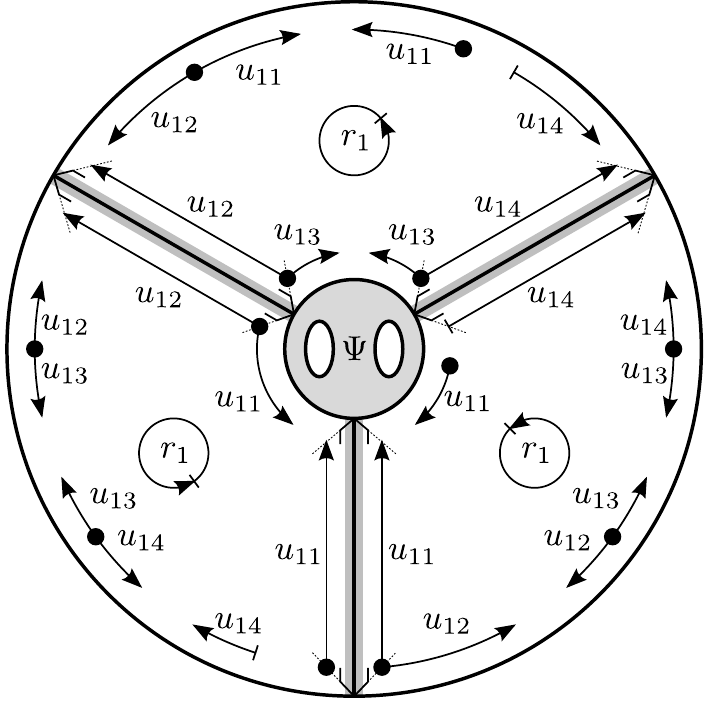}
\caption{Counting elements of $L$ mapped to $\Psi$
(there are $4$ such elements here, $4$ is divisible by $k_1=4$).}
\label{figure:5}
\end{figure}

It is left to show that if $\Psi\in K_i$
and either $\chi(\Psi)=1$ or $\Psi$ contains an
index-$i$ exceptional arc incident to a face of $\Delta$, then
the set of elements of $L$ mapped to $\Psi$ is non-empty
(and hence there are at least $k_i$ such elements).

Suppose $\Psi\in K_i$ and $\Psi$ contains an index-$i$
exceptional arc $u$ incident to a face $\Pi$ of $\Delta$.
(This arc is an external arc of $\Delta$.)
Let $s_1,\dotsc,s_{k_i}$, $s_1',\dotsc,s_{k_i}'$,
$t_1,\dotsc,t_{k_i}$, $t_0$
be the paths in the characteristic boundary of $\Pi$
as in the definition of $L$.
Then $u$ lies on the image of one of the paths
$s_1,\dotsc,s_{k_i}$, $s_1',\dotsc,s_{k_i}'$.
If $u$ lies on the image of $s_j$ for some $j=1,\dotsc,k_i$,
then the initial vertex of $s_j$ is an elements of $L$ and is mapped
to $\Psi$.
If $u$ lies on the image of $s_j'$ for some $j=1,\dotsc,k_i-1$,
then the initial vertex of $s_{j+1}$
is an elements of $L$ and is mapped to $\Psi$.
If $u$ lies on the image of $s_{k_i}'$,
then the initial vertex of $s_1$
is an elements of $L$ and is mapped to $\Psi$.
Thus in this case the set of elements of $L$ mapped to $\Psi$
is non-empty.

Now consider an arbitrary element $\Psi$ of $K_i$
of Euler characteristic~$1$.
By Corollary
\ref{corollary.lemma:maps__positive_Euler_characteristic.(3.2)},
$\Psi$ is a disc subdiagram of $\Delta$.
Suppose that no element of $L$ is mapped to $\Psi$,
as on Figure \ref{figure:6}, for example.
After cyclically shifting and/or inverting the contour of $\Psi$
if necessary,
it can be concluded that
the contour label of $\Psi$ is of the form $w^n$ for some $n\ne0$,
where $w=w_i$ if $r_i$ is a relator of the first kind,
and $w=v$ if $r_i$ is a relator of the second kind
(as defined in Section \ref{section:presentation_construction}).
Therefore $[w]^n=1$ in the group presented by
\tGP{a,b}{r_i,\ i\in \hat J_i},
and hence $[w]=1$ in this group, since it is torsion-free.
\begin{figure}\centering
\includegraphics[scale=1.1]{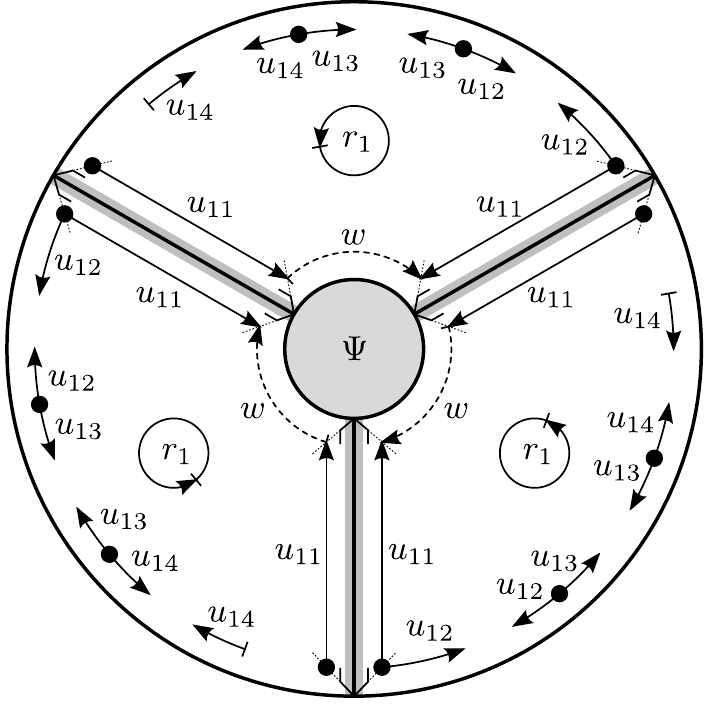}
\caption{A situation prohibited by the absence of torsion
(if $w^3=1$, then $w=1$).}
\label{figure:6}
\end{figure}

Let $\Gamma$ be a convenient disc diagram
over \tGP{a,b}{r_i,\ i\in \hat J_i}
with contour label $w$ (see Lemma \ref{lemma:diagrams.(3.6)}).
By Lemma \ref{lemma:condition_D.(5.2)},
the diagram $\Gamma$ satisfies
the condition $\mathsf D(\lambda,\mu,\nu)$
relative to itself.
By inductive assumption, $\Gamma$ satisfies
the condition $\mathsf Y$ relative to itself.
Therefore Lemma \ref{lemma:useful_lemma.generic.(4.6)} can be applied,
and hence
$$
\abs{w}=\abs{\cntr{}\Gamma}
\ge\sum_{\Pi\in\Gamma(2)}(1-2\gamma(\Pi))\abs{\cntr\Pi}.
$$
Combined with
condition (C\ref{item:presentation_construction.main_properties.10}),
this inequality excludes the case $w=v$,
and in the case $w=w_i$ it shows, using
condition (C\ref{item:presentation_construction.main_properties.9}),
that $\iota(\Pi)<i$ for every $\Pi\in\Gamma(2)$.
Hence $[w_i]=1$ in the group presented by \tGP{a,b}{r_n,\ n\in I, n<i},
but this contradicts the construction of the index set~$I$.
\end{proof}

\begin{lemma}
\label{lemma:condition_Y__singular_asphericity.(5.4)}
Let\/ $J$ be a finite subset of\/ $I$\textup.
Then every convenient connected diagram
over\/ \tGP{a,b}{r_i,\ i\in J}
satisfies the condition\/ $\mathsf Y$ relative to itself\textup,
the presentation\/ \tGP{a,b}{r_i,\ i\in J} is singularly aspherical\textup,
and the group presented by\/ \tGP{a,b}{r_i,\ i\in J}
is torsion-free\textup.
\end{lemma}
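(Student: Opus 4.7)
I proceed by induction on $\abs{J}$. The base case $\abs{J}=0$ is immediate: the presentation $\GP{a,b}{\varnothing}$ presents the free group $F$, which is torsion-free. Singular asphericity is vacuous here, since there are no relators (so the non-proper-power and distinctness-of-conjugacy-classes conditions hold trivially), no diagram has any face (so no spherical diagram exists), and $\mathsf Y$ is vacuous in the absence of exceptional arcs.

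For the inductive step, assume all three conclusions for every proper subset $J'\subsetneq J$; in particular, $\GP{a,b}{r_i,\ i\in J'}$ is torsion-free for every such $J'$. The first conclusion then follows at once: given any convenient connected diagram $\Delta$ over $\GP{a,b}{r_i,\ i\in J}$, view $\Delta$ as a subdiagram of itself (whose face index set is contained in $J$) and apply Lemma~\ref{lemma:condition_Y.(5.3)}, using the inductive torsion-freeness to see that $\Delta$ satisfies $\mathsf Y$ relative to itself.

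For singular asphericity, the conditions on the family of relators are built into the construction: no $r_i$ is a proper power in $F$ by condition (C\ref{item:presentation_construction.main_properties.12}), and no two distinct indexed members $r_i,r_j$ with $i,j\in J$ are conjugate or conjugate to each other's inverses in $F$, because by (C\ref{item:presentation_construction.main_properties.11}) the normal closures of $[r_i]$ and $[r_j]$ in $F$ are distinct whenever $i\ne j$. It remains to establish diagrammatic asphericity, which I do by contradiction. Suppose $\Delta$ is a diagrammatically reduced spherical diagram over $\GP{a,b}{r_i,\ i\in J}$. By Lemma~\ref{lemma:convenient_diagrams.(5.1)}, some sequence of diamond moves transforms $\Delta$ into a convenient diagram $\Delta'$. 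By Lemma~\ref{lemma:diamond_move.(3.4)} the total number of faces is preserved, and since $\Delta$ is orientable only proper and disconnecting moves can occur; a short Euler characteristic computation using parts (3) and (5) of that lemma shows that every connected component of $\Delta'$ still has spherical closure. Because the face count is preserved and positive, at least one connected component $\Psi$ of $\Delta'$ is spherical with a positive number of faces; being a component of a convenient diagram, $\Psi$ is itself convenient. The first conclusion (already proved in this step) gives that $\Psi$ satisfies $\mathsf Y$ relative to itself; Lemma~\ref{lemma:condition_D.(5.2)} gives $\mathsf D(\lambda,\mu,\nu)$ relative to itself; and condition (C\ref{item:presentation_construction.main_properties.8}) gives $\gamma(\Pi)<1/2$ for every face $\Pi$. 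Since $\Psi$ has $n=0$ contours and $\chi(\Psi)=2$, so that $n+3\chi(\Psi)=6\ge0$, Lemma~\ref{lemma:useful_lemma.generic.(4.6)} applies and yields
$$
0=\sum_i\abs{\cntr_i\Psi}\ge\sum_{\Pi\in\Psi(2)}(1-2\gamma(\Pi))\abs{\cntr\Pi}>0,
$$
a contradiction.

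Finally, Lemma~\ref{lemma:singularly_aspherical_groups.(3.8)} applied to the singular asphericity just established yields the torsion-freeness of $\GP{a,b}{r_i,\ i\in J}$, completing the induction. The main delicate point is the diagrammatic asphericity step, where one must verify via Lemma~\ref{lemma:diamond_move.(3.4)} that after converting the hypothetical diagrammatically reduced spherical diagram into a convenient one at least one spherical connected component with a nonzero number of faces survives, so that Lemma~\ref{lemma:useful_lemma.generic.(4.6)} can be invoked; once this is granted, the inequality $\gamma<1/2$ from (C\ref{item:presentation_construction.main_properties.8}) closes the argument.
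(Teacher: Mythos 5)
Your proposal is correct and follows essentially the same route as the paper: induct on $J$ (with torsion-freeness of proper subpresentations as the operative hypothesis), get $\mathsf Y$ via Lemma~\ref{lemma:condition_Y.(5.3)}, reduce singular asphericity to diagrammatic asphericity plus conditions (C\ref{item:presentation_construction.main_properties.11})--(C\ref{item:presentation_construction.main_properties.12}), produce from a hypothetical diagrammatically reduced spherical diagram a convenient spherical component via Lemmas~\ref{lemma:convenient_diagrams.(5.1)} and~\ref{lemma:diamond_move.(3.4)}, and contradict Lemma~\ref{lemma:useful_lemma.generic.(4.6)} with (C\ref{item:presentation_construction.main_properties.8}), then conclude torsion-freeness from Lemma~\ref{lemma:singularly_aspherical_groups.(3.8)}. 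You spell out the Euler-characteristic bookkeeping for the diamond moves (which the paper compresses into a parenthetical citation of Lemmas~\ref{lemma:diamond_move.(3.4)}, \ref{lemma:diagram_reduction.(3.5)}, \ref{lemma:convenient_diagrams.(5.1)}) and settle for a spherical component with at least one face rather than the paper's ``at least $3$ faces,'' which is equally sufficient for the final strict inequality.
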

\begin{proof}
Without loss of generality (or inducting of $J$),
assume that all proper
subpresentations of \tGP{a,b}{r_i,\ i\in J}
define torsion-free groups.

Then, by Lemma \ref{lemma:condition_Y.(5.3)},
every convenient connected diagram over
\tGP{a,b}{r_i,\ \linebreak[3]i\in J}
satisfies the condition $\mathsf Y$
relative to itself.

To show that \tGP{a,b}{r_i,\ i\in J} is singularly aspherical,
it suffices to show that it is diagrammatically aspherical
and to recall conditions
(C\ref{item:presentation_construction.main_properties.11}) and
(C\ref{item:presentation_construction.main_properties.12}).

Suppose \tGP{a,b}{r_i,\ i\in J} is not diagrammatically aspherical.
Then there exists a convenient spherical diagram over
\tGP{a,b}{r_i,\ i\in J} with at least $3$ faces
(use Lemmas
\ref{lemma:diamond_move.(3.4)},
\ref{lemma:diagram_reduction.(3.5)},
\ref{lemma:convenient_diagrams.(5.1)});
let $\Delta$ be such a diagram.
Then $\Delta$ satisfies the condition $\mathsf Y$
relative to itself.
By Lemma \ref{lemma:condition_D.(5.2)}, $\Delta$ satisfies
$\mathsf D(\lambda,\mu,\nu)$ relative to itself.
Hence, by Lemma \ref{lemma:useful_lemma.generic.(4.6)}
and
condition (C\ref{item:presentation_construction.main_properties.8}),
$$
0\ge\sum_{\Pi\in\Delta(2)}
\bigl(1-2\gamma(\Pi)\bigr)\abs{\cntr\Pi}>0,
$$
which gives a contradiction.

Thus \tGP{a,b}{r_i,\ i\in J} is singularly aspherical,
and it defines a torsion-free group
by Lemma~\ref{lemma:singularly_aspherical_groups.(3.8)}.
\end{proof}

\begin{corollary.lemma}
\label{corollary.lemma:condition_Y__singular_asphericity.(5.5)}
Every convenient connected diagram
over\/ \tGP{a,b}{\mathcal R}
satisfies the condition\/ $\mathsf Y$ relative to itself\textup.
The presentation\/ \tGP{a,b}{\mathcal R} is singularly aspherical\textup.
The group\/ $G$ is torsion-free\textup.
\end{corollary.lemma}

\begin{proposition}
\label{proposition:torsion-free.(5.6)}
The group\/ $G$ is torsion-free and of cohomological and
geometric dimension~$2$\textup.
\end{proposition}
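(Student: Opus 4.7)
The plan is to combine three previously established results: Corollary~\ref{corollary.lemma:condition_Y__singular_asphericity.(5.5)} (the presentation \tGP{a,b}{\mathcal R} is singularly aspherical and $G$ is torsion-free), Lemma~\ref{lemma:singularly_aspherical_groups.(3.8)} (a nontrivial singularly aspherical group is either free or of cohomological and geometric dimension $2$), and Proposition~\ref{proposition:simple.norms_stably_bounded.(2.1)} (if $G$ is nontrivial then it is simple), after first establishing nontriviality of $G$.

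To prove that $G$ is nontrivial, I would argue by contradiction, following the informal outline opening Section~\ref{section:technical_lemmas}. Assume $[a]_G=1$. By Lemma~\ref{lemma:diagrams.(3.6)}(\ref{item:lemma.diagrams.2}), there is a diamond-move reduced disc diagram over \tGP{a,b}{\mathcal R} with contour label $a$; such a diagram is diagrammatically reduced since its only component is a disc, and then by Lemma~\ref{lemma:convenient_diagrams.(5.1)} together with Lemma~\ref{lemma:diamond_move.(3.4)} it can be transformed by further diamond moves into a convenient connected one-contour diagram $\Delta$ with contour label $a$ and $\chi(\Delta)=1$. Lemma~\ref{lemma:condition_D.(5.2)} gives that $\Delta$ satisfies $\mathsf D(\lambda,\mu,\nu)$ relative to itself, and Corollary~\ref{corollary.lemma:condition_Y__singular_asphericity.(5.5)} that it satisfies $\mathsf Y$ relative to itself. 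Since $1+3\chi(\Delta)=4\ge 0$ and condition (C\ref{item:presentation_construction.main_properties.8}) ensures $\gamma(\Pi)<1/2$, Lemma~\ref{lemma:useful_lemma.generic.(4.6)} yields
$$
1=\abs{a}\ge\sum_{\Pi\in\Delta(2)}\bigl(1-2\gamma(\Pi)\bigr)\abs{\cntr\Pi}.
$$
If $\Delta$ had no faces, its contour label would be trivial, contradicting $\abs{a}=1$; otherwise condition (C\ref{item:presentation_construction.main_properties.10}) gives $(1-2\gamma(\Pi))\abs{\cntr\Pi}>\abs{v}\ge 1$ for every face, again contradicting the displayed inequality. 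Hence $[a]_G\ne 1$ and $G$ is nontrivial.

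Proposition~\ref{proposition:simple.norms_stably_bounded.(2.1)} then gives that $G$ is simple, Corollary~\ref{corollary.lemma:condition_Y__singular_asphericity.(5.5)} supplies torsion-freeness and singular asphericity, and Lemma~\ref{lemma:singularly_aspherical_groups.(3.8)} implies that $G$ is either free or has cohomological and geometric dimension $2$. The free option is incompatible with nontrivial simplicity: a nontrivial free group is either $\mathbb{Z}$, which has $2\mathbb{Z}$ as a proper nontrivial normal subgroup, or is of rank at least $2$ and contains its commutator subgroup as a proper nontrivial normal subgroup. Hence $G$ has cohomological and geometric dimension exactly $2$.

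The main obstacle is the nontriviality step, since it is the only place where the small-cancellation structure of the presentation must be invoked directly rather than inherited from earlier results; everything else in the proof is a short chain of citations, with the minor bookkeeping point being the verification that the diamond moves in Lemma~\ref{lemma:convenient_diagrams.(5.1)} do not spoil the disc character of the diagram once one restricts to the connected component carrying the nontrivial contour.
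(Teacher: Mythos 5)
Your proof is correct and follows the same route as the paper's, which simply cites Lemma~\ref{lemma:singularly_aspherical_groups.(3.8)} and Corollary~\ref{corollary.lemma:condition_Y__singular_asphericity.(5.5)}. You additionally make explicit the nontriviality of $G$ (via the diagram/Euler-characteristic argument through Lemma~\ref{lemma:useful_lemma.generic.(4.6)} and condition (C\ref{item:presentation_construction.main_properties.10})) and the non-freeness of a nontrivial simple group, both of which are genuinely needed to invoke Lemma~\ref{lemma:singularly_aspherical_groups.(3.8)} but are left implicit in the paper's one-line proof.
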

\begin{proof}
This follows from
Lemma \ref{lemma:singularly_aspherical_groups.(3.8)} and
Corollary~\ref{corollary.lemma:condition_Y__singular_asphericity.(5.5)}.
\end{proof}

\begin{lemma}
\label{lemma:useful_lemma.particular.(5.7)}
If\/ $\Delta$ is a convenient disc or annular
diagram over\/ \tGP{a,b}{\mathcal R}\textup, then
$$
\sum_{\Pi\in\Delta(2)}(1-2\gamma(\Pi))\abs{\cntr\Pi}
\le\sum_i\abs{\cntr_i\Delta}.
$$
\end{lemma}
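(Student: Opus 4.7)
The plan is to deduce this immediately from Lemma~\ref{lemma:useful_lemma.generic.(4.6)} by verifying its four hypotheses for $\Delta$. First, I observe that $\Delta$, being a convenient disc or annular diagram, is a connected $S$-map. The number of contours and the Euler characteristic are $n=1$, $\chi(\Delta)=1$ in the disc case and $n=2$, $\chi(\Delta)=0$ in the annular case. Thus $n+3\chi(\Delta)=4$ or $n+3\chi(\Delta)=2$, and in either case $n+3\chi(\Delta)\ge 0$, so the Euler-characteristic hypothesis holds.

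Next, by Lemma~\ref{lemma:condition_D.(5.2)}, the convenient diagram $\Delta$ satisfies $\mathsf D(\lambda,\mu,\nu)$ relative to itself; by Corollary~\ref{corollary.lemma:condition_Y__singular_asphericity.(5.5)}, $\Delta$ satisfies $\mathsf Y$ relative to itself. Finally, condition (C\ref{item:presentation_construction.main_properties.8}) on the presentation \tGP{a,b}{\mathcal R} yields
$$
\gamma(\Pi)=\gamma_{\iota(\Pi)}<\frac{1}{2}
$$
for every $\Pi\in\Delta(2)$, since $\iota(\Pi)\in I\subset\mathbb N$ and the definition of $\gamma(\Pi)$ in Section~\ref{section:proof_main_theorem} matches $\gamma_{\iota(\Pi)}$ via $\kappa(\Pi)=\kappa_{\iota(\Pi)}=2k_{\iota(\Pi)}$.

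With all four hypotheses of Lemma~\ref{lemma:useful_lemma.generic.(4.6)} in place, its conclusion applied to $\Delta$ gives exactly the claimed inequality. There is essentially no obstacle here: the lemma is a direct specialization of the generic estimate of Section~\ref{section:technical_lemmas} to the case of a convenient diagram over \tGP{a,b}{\mathcal R}, and all the substantive work has already been done in proving Lemma~\ref{lemma:condition_D.(5.2)} and Corollary~\ref{corollary.lemma:condition_Y__singular_asphericity.(5.5)}, together with the arithmetic of the parameters $\lambda_n,\mu_n,\nu_n,\kappa_n$ guaranteed by the construction in Section~\ref{section:presentation_construction}.
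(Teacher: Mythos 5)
Your proof is correct and mirrors the paper's own argument exactly: the paper's proof is the single sentence that Lemma~\ref{lemma:useful_lemma.particular.(5.7)} ``follows from Lemmas \ref{lemma:useful_lemma.generic.(4.6)}, \ref{lemma:condition_D.(5.2)}, and Corollary \ref{corollary.lemma:condition_Y__singular_asphericity.(5.5)},'' and you have simply written out the verification of the hypotheses of Lemma~\ref{lemma:useful_lemma.generic.(4.6)}. One tiny imprecision worth noting: the $\gamma$ of Lemma~\ref{lemma:useful_lemma.generic.(4.6)} is $\lambda+(3+\kappa+\kappa')\mu+2\nu$, whereas the $\gamma$ used in Section~\ref{section:proof_main_theorem} (and in the statement being proved) is $\lambda+(3+2\kappa)\mu+2\nu=\gamma_{\iota(\cdot)}$; these agree only when $\kappa'=\kappa$, but since $\kappa'\le\kappa$ the former is $\le$ the latter, so condition (C\ref{item:presentation_construction.main_properties.8}) still verifies the hypothesis and the conclusion of Lemma~\ref{lemma:useful_lemma.generic.(4.6)} is in fact at least as strong as the inequality you need.
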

\begin{proof}
This lemma follows from Lemmas
\ref{lemma:useful_lemma.generic.(4.6)}, \ref{lemma:condition_D.(5.2)},
and Corollary
\ref{corollary.lemma:condition_Y__singular_asphericity.(5.5)}.
\end{proof}

\begin{proposition}
\label{proposition:limit_hyperbolic_groups.(5.8)}
Every finite subpresentation of\/ \tGP{a,b}{\mathcal R}
presents a hyperbolic group\textup.
\end{proposition}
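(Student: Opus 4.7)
The plan is to establish a linear isoperimetric inequality for every finite subpresentation of $\GP{a,b}{\mathcal R}$\textup; by Gromov's theorem this implies hyperbolicity of the presented group. Fix a finite $J\subset I$ and let $G_J$ denote the group presented by $\GP{a,b}{r_i,\ i\in J}$. Setting $\gamma^*=\max_{i\in J}\gamma_i$ and $\ell^*=\min_{i\in J}\abs{r_i}$, condition (C\ref{item:presentation_construction.main_properties.8}) gives $\gamma^*<1/2$ and clearly $\ell^*\ge1$\textup; let $K=1/\bigl((1-2\gamma^*)\ell^*\bigr)$.

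Given any group word $w$ with $[w]_{G_J}=1$, Lemma \ref{lemma:diagrams.(3.6)}(\ref{item:lemma.diagrams.2}) provides a diamond-move reduced disc diagram $\Delta_0$ over $\GP{a,b}{r_i,\ i\in J}$ with contour label $w$. Since $\Delta_0$ is connected and not spherical, it contains no spherical two-face connected component, and is therefore diagrammatically reduced. By Lemma \ref{lemma:convenient_diagrams.(5.1)}, a further sequence of diamond moves transforms $\Delta_0$ into a convenient diagram\textup; its non-closed component $\Delta$ is a convenient disc diagram with the same contour label $w$ and $\abs{\Delta(2)}\le\abs{\Delta_0(2)}$, by Lemmas \ref{lemma:diamond_move.(3.4)} and \ref{lemma:diagram_reduction.(3.5)}. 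Applying Lemma \ref{lemma:useful_lemma.particular.(5.7)} to $\Delta$ then yields
$$
\abs{w}=\abs{\cntr{}\Delta}\ge\sum_{\Pi\in\Delta(2)}(1-2\gamma(\Pi))\abs{\cntr\Pi}\ge(1-2\gamma^*)\ell^*\cdot\abs{\Delta(2)},
$$
whence $\abs{\Delta(2)}\le K\abs{w}$. Thus every word of length $n$ representing $1$ in $G_J$ admits a van Kampen diagram over $\GP{a,b}{r_i,\ i\in J}$ with at most $Kn$ faces, giving the desired linear isoperimetric inequality, from which hyperbolicity of $G_J$ follows by Gromov's theorem.

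The main technical point to verify is that the diamond moves of Lemma \ref{lemma:convenient_diagrams.(5.1)}, which may disconnect the diagram and spin off closed components, nevertheless leave a disc component carrying the original contour label $w$ and having face count no greater than $\abs{\Delta_0(2)}$. Since diamond moves preserve contour labels and the total number of faces (Lemma \ref{lemma:diamond_move.(3.4)}), and since $\Delta_0$ is orientable of Euler characteristic $1$ so that no untwisting move can occur, the case analysis underlying Lemma \ref{lemma:diagram_reduction.(3.5)} confirms that the unique non-closed component remains a disc, and discarding the closed components can only lower the face count.
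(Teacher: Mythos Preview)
Your proof is correct and follows essentially the same approach as the paper: establish a linear isoperimetric inequality for $\GP{a,b}{r_i,\ i\in J}$ by applying Lemma~\ref{lemma:useful_lemma.particular.(5.7)} to a convenient disc diagram obtained from a diamond-move reduced one via Lemma~\ref{lemma:convenient_diagrams.(5.1)}.

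One small remark on your final paragraph: your concern that the diamond moves of Lemma~\ref{lemma:convenient_diagrams.(5.1)} might disconnect the diagram is unnecessary. Since $\Delta_0$ is already diamond-move reduced, \emph{every} subsequent diamond move must be proper---an untwisting or disconnecting move would increase the Euler characteristic, contradicting reducedness---and by Lemma~\ref{lemma:diamond_move.(3.4)}(3) proper moves preserve the closure up to combinatorial equivalence and can be undone, so the result of each move is again a diamond-move reduced disc diagram. Hence the convenient diagram produced by Lemma~\ref{lemma:convenient_diagrams.(5.1)} is itself a disc with exactly $\abs{\Delta_0(2)}$ faces, and no passage to a ``non-closed component'' is needed. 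Your appeal to orientability to rule out untwisting moves is correct but does not by itself exclude disconnecting moves; the diamond-move reducedness of $\Delta_0$ handles both at once.
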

\begin{proof}
Let $J$ be a finite subset of $I$.
Let $q=\min_{i\in J}(1-2\gamma_i)$.
Then $q>0$
by condition (C\ref{item:presentation_construction.main_properties.8}).

A group is hyperbolic if and only if it has
a finite presentation with a linear \emph{isoperimetric function},
see Theorems 2.5, 2.12 in \cite{AlonsoBCFLMSS:1991:nwhg}.

Consider an arbitrary $w\in\{a^{\pm1},b^{\pm1}\}^*$
such that $[w]=1$ in the group presented by \tGP{a,b}{r_i,\ i\in J}.
Take an arbitrary diamond-move reduced
disc diagram over \tGP{a,b}{r_i,\ i\in J}
with contour label $w$
and transform it by diamond moves into a convenient
disc diagram $\Delta$ with contour label $w$
(see Lemmas
\ref{lemma:diamond_move.(3.4)},
\ref{lemma:diagrams.(3.6)},
\ref{lemma:convenient_diagrams.(5.1)}).
Then, by Lemma \ref{lemma:useful_lemma.particular.(5.7)},
$$
\norm{\Delta(2)}\le\sum_{\Pi\in\Delta(2)}\abs{\cntr\Pi}
\le\frac{1}{q}\abs{\cntr{}\Delta}.
$$
Thus the isoperimetric function of
\tGP{a,b}{r_i,\ i\in J} is linear.
\end{proof}

\begin{proposition}
\label{proposition:free_subgroups.infinite_commutator_width.(5.9)}
The elements\/ $[z_1]$ and\/ $[z_2]$
freely generate a free subgroup\/ $H$ of\/ $G$ such that
for every\/ $h\in H\setminus\{1\}$\textup,
$$
\lim_{n\to\infty}\cl_{G}(h^n)=\infty.
$$
\end{proposition}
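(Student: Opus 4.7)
The plan is to establish the two claims separately, both by contradiction using convenient diagrams over $\langle a,b\,\Vert\,\mathcal{R}\rangle$.

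For the free-generation claim, suppose toward contradiction that some nontrivial freely reduced concatenation $w$ of copies of $z_1^{\pm1},z_2^{\pm1}$ satisfies $[w]_G=1$. By property~3 of the $z$-words in Section~\ref{section:presentation_construction}, one may replace $w$ by a conjugate and assume that $w$ is cyclically reduced as a $z$-concatenation. Lemma~\ref{lemma:diagrams.(3.6)} together with Lemma~\ref{lemma:convenient_diagrams.(5.1)} provides a convenient disc diagram $\Delta$ with $\ell(\partial\Delta)=w$, and Lemma~\ref{lemma:useful_lemma.particular.(5.7)} yields
$$
\sum_{\Pi\in\Delta(2)}(1-2\gamma(\Pi))\abs{\partial\Pi}\le\abs{w}.
$$
I would close this part by proving the reverse strict inequality. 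The point is that $\partial\Delta=w$ is a $z$-word, so each maximal piece of a face contour $\partial\Pi$ lying on $\partial\Delta$ is a common subword of $r_{\iota(\Pi)}$ and a concatenation of $z_1^{\pm1},z_2^{\pm1}$. Condition (C\ref{item:presentation_construction.main_properties.6}) bounds the $u$-subword portions by $\mu_n\abs{r_n}$ each, while (C\ref{item:presentation_construction.main_properties.9}) and (C\ref{item:presentation_construction.main_properties.10}) bound the non-$u$ portions ($w_n$, $v$, $x_n$) by $(1-2\gamma_n)\abs{r_n}$. A careful accounting shows that the total length of $\partial\Pi$ on $\partial\Delta$ is strictly less than $(1-2\gamma_{\iota(\Pi)})\abs{\partial\Pi}$, so summing over faces gives $\abs{w}<\sum_\Pi(1-2\gamma(\Pi))\abs{\partial\Pi}$, contradicting the displayed inequality.

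For the commutator-length claim, assume toward contradiction that $\cl_G(h^n)\le K$ for some fixed $K$ and all $n$ in an infinite set $S\subset\mathbb{N}$. For each $n\in S$, Lemma~\ref{lemma:diagrams.(3.6)} and Lemma~\ref{lemma:convenient_diagrams.(5.1)} supply a convenient one-contour diagram $\Delta_n$ with $\ell(\partial\Delta_n)=h^n$ whose closure is a sphere with $K$ handles, so $\chi(\Delta_n)=1-2K$. Since $\chi_m\to-\infty$ by (C\ref{item:presentation_construction.main_properties.7}), condition (C\ref{item:presentation_construction.main_properties.8}) suggests a generalization of Lemma~\ref{lemma:useful_lemma.generic.(4.6)} in which $\gamma$ is replaced by a modified $\tilde\gamma=\lambda+(3-3\chi+\kappa+\kappa')\mu+(1-\chi)\nu$ absorbing the Euler-characteristic correction; this remains $<1/2$ for every face index $m$ with $\chi_m\le\chi(\Delta_n)$ and gives a useful estimate $\sum_\Pi(1-2\tilde\gamma(\Pi))\abs{\partial\Pi}\le\abs{h}n$. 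Consequently the total face-length of $\Delta_n$ is linearly bounded in $n$, while its topology is fixed.

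The decisive remaining step exploits the periodicity of $h^n$: the boundary word has period $\abs{h}$ but unbounded length as $n$ grows, while the genus of $\Delta_n$ stays equal to $K$. A pigeonhole argument on the cyclic positions (modulo $\abs{h}$) of face incidences along $\partial\Delta_n$ and of the handle attachments produces, for $n$ sufficiently large, a proper simple disc subdiagram $\Phi\subset\Delta_n$ whose contour label is $h^k$ for some $k\ge1$. By the first part of the proposition, $[h^k]_G\ne1$, which is incompatible with the existence of such a disc subdiagram.

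The main obstacle will be the final extraction step: producing $\Phi$ with contour $h^k$ requires controlling both the $S$-map structure and the topology of $\Delta_n$ so that the extracted subdiagram avoids the handles and inherits a genuine disc structure. Property~(\ref{item:z_1_z_2.properties.4}) of $z_1,z_2$ — the existence of $z$-concatenations with arbitrarily short cyclic overlaps — is essential here to guarantee that distinct cyclic shifts of $h$ cannot be identified accidentally within $\Delta_n$, so that the pigeonhole really detects an honest repeat of the word $h$ on the boundary.
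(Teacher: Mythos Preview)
Your Part~2 has a genuine gap at exactly the point you flag as the ``main obstacle.'' There is no mechanism that forces a convenient one-contour diagram $\Delta_n$ of fixed genus $K$ with contour label $h^n$ to contain a simple disc subdiagram $\Phi$ with contour label $h^k$. Periodicity of the boundary word imposes no periodicity on the interior: faces can straddle many periods of $h$ along $\partial\Delta_n$, arcs can run between distant boundary segments, and the handles need not sit ``between'' period markers. A pigeonhole on cyclic positions modulo $\abs{h}$ tells you only that two boundary vertices map to the same residue, not that the region between them is an embedded disc. You would need a surgery argument that actually cuts $\Delta_n$ along such coincidences and controls the resulting topology, and nothing in the paper's toolkit provides this. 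Also, property~(\ref{item:z_1_z_2.properties.4}) of $z_1,z_2$ plays no role in this proposition; it is used only for Proposition~\ref{proposition:infinite_square_width.(5.10)}.

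Your Part~1 ``careful accounting'' is also not justified as stated. Condition (C\ref{item:presentation_construction.main_properties.6}) bounds a \emph{single} common subword of $u_{n,i}^{\pm1}$ and a $z$-concatenation by $\mu_n\abs{r_n}$, but a face can meet $\partial\Delta$ in many arcs, and (C\ref{item:presentation_construction.main_properties.9}), (C\ref{item:presentation_construction.main_properties.10}) say nothing about how much of $r_n$ can be a $z$-word. Summing naively gives a bound of order $(\lambda_n+\kappa_n\mu_n)\abs{r_n}$, and the conditions do not force this below $(1-2\gamma_n)\abs{r_n}$.

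The paper avoids both problems by proving the two assertions simultaneously with a single edge-counting argument, with no extraction of subdiagrams. Fix $n$, set $c_n=2-2n$, and choose $m$ so large that $(3-3c_n)\max\{p_n,\abs{w}-1\}+(1-c_n)q_n<\tfrac12\abs{w^m}$, where $p_n=\max_{i\in J_n}\mu_i\abs{r_i}$, $q_n=\max_{i\in J_n}\nu_i\abs{r_i}$, and $J_n=\{i:\chi_i>c_n\}$ is finite by (C\ref{item:presentation_construction.main_properties.7}). Assuming $\cl_G([w^m])\le n$, take a convenient $\Delta$ with $\chi(\bar\Delta)\ge c_n$ and apply the two Estimating Lemmas directly to $\bar\Delta$. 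The two ideas you are missing are: (i) any arc of $\bar\Delta$ incident only to the outer face $\Theta$ has length at most $\abs{w}-1$, because orientability forces its label to be a common subword of $w^m$ and $w^{-m}$, and in a free group no nontrivial element is conjugate to its inverse; (ii) split the error terms $M_2,N_2$ according to whether the relevant face index lies in $J_n$ or not: for $i\in J_n$ use the constants $p_n,q_n$ and the choice of $m$, while for $i\notin J_n$ one has $\chi_i\le c_n$ and the second inequality of (C\ref{item:presentation_construction.main_properties.8}) absorbs the factors $(3-3c_n)$ and $(1-c_n)$ into $\tfrac12-\gamma_i$. Summing gives $\norm{\Delta(1)}=L+M+N<\tfrac12\sum_{\Pi\in\bar\Delta(2)}\abs{\partial\Pi}=\norm{\Delta(1)}$. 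Free generation then follows for free, since $\cl_G([w^m])>0$ for large $m$ means $[w^m]\ne1$, hence $[w]\ne1$.
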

\begin{proof}
Let $w$ be a nontrivial reduced concatenation of
several copies of $z_1^{\pm1}$ and $z_2^{\pm1}$, and
$n$ a positive integer.
To prove this lemma, it suffices to show that
$\cl_G([w^m])>n$ for every large enough $m$.
Without loss of generality, assume that $w$ is cyclically reduced.

Denote $c_n=2-2n$
(the Euler characteristic of a sphere with $n$ handles).
Let
$$
J_n=\{\,i\in I\,\mid\,\chi_i>c_n\,\}.
$$
The set $J_n$ is finite by
condition (C\ref{item:presentation_construction.main_properties.7}).
Let
$$
p_n=\max_{i\in J_n}\mu_i\abs{r_i},
\qquad
q_n=\max_{i\in J_n}\nu_i\abs{r_i}.
$$
Let $m$ be an arbitrary integer such that
$$
(3-3c_n)\max\{p_n,\abs{w}-1\}+(1-c_n)q_n<\frac{1}{2}\abs{w^m}.
$$
It shall now suffice to show that $\cl_G([w^m])>n$.
This can be done by contradiction.

Suppose $\cl_G([w^m])\le n$.
Then, by Lemmas
\ref{lemma:diamond_move.(3.4)},
\ref{lemma:diagrams.(3.6)},
\ref{lemma:convenient_diagrams.(5.1)},
there exists
a convenient one-contour diagram $\Delta$ over
\tGP{a,b}{\mathcal R} with contour label $w^m$ and
whose closure is a combinatorial sphere with $n$ or fewer handles.
Let $\bar\Delta$ be the closure of $\Delta$.
Then $\chi(\bar\Delta)\ge c_n$.

Let $\Theta$ be the ``outer'' face of $\bar\Delta$
(the face that is not in $\Delta$).
Then $\kappa(\Theta)=0$
(recall that a structure of an $S$-map is defined on~$\bar\Delta$).

By Lemma \ref{lemma:condition_D.(5.2)},
$\Delta$ satisfies
$\mathsf D(\lambda,\mu,\nu)$
relative to $\bar\Delta$.
By Corollary \ref{corollary.lemma:condition_Y__singular_asphericity.(5.5)},
$\Delta$ satisfies $\mathsf Y$ relative to itself and hence
relative to $\bar\Delta$ as well.
By induction and Lemma \ref{lemma:inductive_lemma.(4.5)} (Inductive Lemma),
using condition
(C\ref{item:presentation_construction.main_properties.8}),
obtain that $\bar\Delta$ satisfies $\mathsf Z(2)$ relative
to every simple disc subdiagram of~$\Delta$.

Let $N$ be the sum of the lengths of all exceptional arcs
of $\bar\Delta$,
$M$ the sum of the lengths of all non-exceptional maximal selected
arcs of $\bar\Delta$, and
$L$ the number of edges of $\bar\Delta$ that do not lie
on any selected arc.
To obtain a contradiction, it suffices to show that
$$
L+M+N<\frac{1}{2}\sum_{\Pi\in\bar\Delta(2)}\abs{\cntr\Pi}.
$$

The following upper estimate on $L$ follows from the condition
$\mathsf D_1(\lambda)$ satisfied by $\Delta$ relative to $\bar\Delta$:
$$
L\le\sum_{\Pi\in\Delta(2)}\!\lambda(\Pi)\abs{\cntr\Pi}
$$
(because every edge of the characteristic boundary of $\Theta$ lies
on a selected path).

Let $A$ be the set of all maximal selected arcs of $\bar\Delta$.
Then
$$
\sum_{u\in A}\abs{u}=M+N.
$$

Apply Lemma \ref{lemma:estimating_lemma.selected_arcs.(4.3)}
(First Estimating Lemma)
to $\bar\Delta$ and the sets
$C=\{\Theta\}$ and $D=\Delta(2)$.
Let $E$ be a subset of $A$ and $h$ a function
$A\setminus E\to\Delta(2)$ such that:
\begin{enumerate}
\item
    either $E=\varnothing$, or
    $\norm{E}\le3-3\chi(\bar\Delta)\le3-3c_n$;
\item
    for every arc $u\in A\setminus E$, the face $h(u)$ is incident to $u$;
\item
    for every face $\Pi\in\Delta(2)$,
    $\norm{h^{-1}(\Pi)}\le3+2\kappa(\Pi)$.
\end{enumerate}
Let $M_1$ be the sum of the lengths of all non-exceptional
arcs from $A\setminus E$,
and $M_2$ the sum of the lengths of all non-exceptional
arcs from $E$.
Then $M_1+M_2=M$, and, by the condition $\mathsf D_2(\mu)$,
$$
M_1
\le\sum_{\Pi\in\Delta(2)}\!
(3+2\kappa(\Pi))\mu(\Pi)\abs{\cntr\Pi},
$$
while
$$
M_2
\le(3-3c_n)p_\Delta,
$$
where $p_\Delta$ is
the maximal length of a non-exceptional maximal selected arc of $\Delta$.

Apply Lemma \ref{lemma:estimating_lemma.exceptional_arcs.(4.4)}
(Second Estimating Lemma) to $\Delta$.
Let $F$ be a set of exceptional arcs of $\Delta$ such that:
\begin{enumerate}
\item
    either $F=\varnothing$, or
    $\norm{F}\le-\chi(\Delta)\le1-c_n$, and
\item
    for every $i$,
    the number of exceptional arcs of index $i$ that are not in $F$
    is at most twice the number of faces of $\Delta$ of index $i$.
\end{enumerate}
Let $N_1$ be the sum of the lengths of all exceptional arcs that
are not in $F$,
and $N_2$ the sum of the lengths of all arcs in $F$.
Then $N_1+N_2=N$, and, by the condition $\mathsf D_3(\nu)$,
$$
N_1\le\sum_{\Pi\in\Delta(2)}\!2\nu(\Pi)\abs{\cntr\Pi},
$$
while
$$
N_2\le(1-c_n)q_\Delta,
$$
where $q_\Delta$ is
the maximal length of an exceptional arc of $\Delta$.

Adding up the estimates for $L$, $M_1$, and $N_1$ gives:
$$
L+M_1+N_1\le\sum_{\Pi\in\Delta(2)}\!\gamma(\Pi)\abs{\cntr\Pi}.
$$
Now $M_2+N_2$ is to be estimated by estimating $p_\Delta$ and $q_\Delta$.

The length of every arc of $\bar\Delta$ that is
incident to $\Theta$ and not incident to any other face
is at most $\abs{w}-1$.
Indeed, the label of each of the two
oriented arcs associated with such an arc
is a common subword of $w^m$ and $w^{-m}$
(because $\bar\Delta$ is orientable),
and
any such word of length $\abs{w}$ would be simultaneously a cyclic
shift of $w$ and of $w^{-1}$, but
in a free group, no nontrivial element can be conjugate to its own inverse
(if it was, it would commute with the square of the conjugating element,
and hence with the conjugating element itself).

Let
$$
Q_{\Delta,n}=\{\,\Pi\in\Delta(2)\,\mid\,\iota(\Pi)\notin J_n\,\}
=\{\,\Pi\in\Delta(2)\,\mid\,\chi_{\iota(\Pi)}\le c_n\,\}.
$$
Then, as follows from conditions
(C\ref{item:presentation_construction.main_properties.4}),
(C\ref{item:presentation_construction.main_properties.5}),
(C\ref{item:presentation_construction.main_properties.6}),
and the estimate $\abs{w}-1$ on the lengths of arcs incident only
to $\Theta$,
\begin{align*}
p_\Delta&\le\max\bigl(\,\{\,\mu(\Pi)\abs{\cntr\Pi}\,
\mid\,\Pi\in Q_{\Delta,n}\,\}
\cup\{p_n,\abs{w}-1\}\,\bigr),\\
q_\Delta&\le\max\bigl(\,\{\,\nu(\Pi)\abs{\cntr\Pi}\,
\mid\,\Pi\in Q_{\Delta,n}\,\}
\cup\{q_n\}\,\bigr).
\end{align*}
Therefore,
\begin{align*}
M_2&\le\sum_{\Pi\in Q_{\Delta,n}}\!(3-3c_n)\mu(\Pi)\abs{\cntr\Pi}
+(3-3c_n)\max\{p_n,\abs{w}-1\},\\
N_2&\le\sum_{\Pi\in Q_{\Delta,n}}\!(1-c_n)\nu(\Pi)\abs{\cntr\Pi}
+(1-c_n)q_n,
\end{align*}
and hence, by the choice of $m$,
$$
M_2+N_2
<\sum_{\Pi\in\Delta(2)}\!
\bigl((3-3\chi_{\iota(\Pi)})\mu(\Pi)
+(1-\chi_{\iota(\Pi)})\nu(\Pi)\bigr)\abs{\cntr\Pi}
+\frac{1}{2}\abs{\cntr\Theta}.
$$

Adding up the obtained estimates for $L+M_1+N_1$ and $M_2+N_2$
and using condition
(C\ref{item:presentation_construction.main_properties.8}) gives
a contradiction:
$$
\norm{\Delta(1)}
=L+M+N
<\sum_{\Pi\in\Delta(2)}\frac{1}{2}\abs{\cntr\Pi}
+\frac{1}{2}\abs{\cntr\Theta}
=\norm{\Delta(1)}.
$$
\end{proof}

\begin{proposition}
\label{proposition:infinite_square_width.(5.10)}
The function\/ $\sql_G$ is unbounded on the free subgroup\/
$H=\langle[z_1],[z_2]\rangle$\textup.
\end{proposition}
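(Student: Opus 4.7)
The plan is to imitate the argument of Proposition~\ref{proposition:free_subgroups.infinite_commutator_width.(5.9)}, using Lemma~\ref{lemma:diagrams.(3.6)}(\ref{item:lemma.diagrams.8}) in place of~\ref{lemma:diagrams.(3.6)}(\ref{item:lemma.diagrams.6}). Given $N\in\mathbb N$, I would set $c_N=2-N$, $J_N=\{i\in I\mid\chi_i>c_N\}$ (finite by~(C\ref{item:presentation_construction.main_properties.7})), $p_N=\max_{i\in J_N}\mu_i\abs{r_i}$, $q_N=\max_{i\in J_N}\nu_i\abs{r_i}$. Using property~(\ref{item:z_1_z_2.properties.4}) of Section~\ref{section:presentation_construction}, I would pick a small $\varepsilon>0$ and a cyclically reduced non-periodic concatenation $w$ of copies of $z_1,z_2$ such that any two distinct cyclic shifts of $w$ share a common prefix of length at most $\varepsilon\abs{w}$; then I would take $m$ so large that
\begin{equation*}
(3-3c_N)\max\{p_N,\abs{w}-1\}+(1-c_N)q_N<\tfrac12\abs{w^m}.
\end{equation*}
The goal is to prove $\sql_G([w^m])>N$.

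Suppose to the contrary that $\sql_G([w^m])\le N$. Lemma~\ref{lemma:diagrams.(3.6)}(\ref{item:lemma.diagrams.8}) combined with Lemmas~\ref{lemma:diamond_move.(3.4)} and~\ref{lemma:convenient_diagrams.(5.1)} produces a convenient diamond-move-reduced connected one-contour diagram $\Delta$ over \tGP{a,b}{\mathcal R} with $\ell(\cntr{}\Delta)=w^m$ whose closure $\bar\Delta$ is either (a)~orientable with $\chi(\bar\Delta)=3-N$ (a sphere with $(N-1)/2$ handles, only possible when $N$ is odd) or (b)~non-orientable with $\chi(\bar\Delta)=2-N$. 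Case~(a) gives, by Lemma~\ref{lemma:diagrams.(3.6)}(\ref{item:lemma.diagrams.5}), $\cl_G([w^m])\le(N-1)/2$, which for $m$ large contradicts Proposition~\ref{proposition:free_subgroups.infinite_commutator_width.(5.9)}; so I may assume $\bar\Delta$ is non-orientable with $\chi(\Delta)=1-N$. In case~(b) the computation of 5.9 transcribes almost verbatim: extend the $S$-map structure to $\bar\Delta$ by assigning the outer face $\Theta$ index $-1$, marking every nontrivial reduced path in $\partial\Theta$ as selected, and adding no new exceptional arcs; Lemma~\ref{lemma:condition_D.(5.2)}, Corollary~\ref{corollary.lemma:condition_Y__singular_asphericity.(5.5)}, and the Inductive Lemma~\ref{lemma:inductive_lemma.(4.5)} with~(C\ref{item:presentation_construction.main_properties.8}) imply that $\bar\Delta$ satisfies $\mathsf Z(2)$ relative to every simple disc subdiagram of $\Delta$. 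Partitioning $\norm{\Delta(1)}$ as $L+M_1+M_2+N_1+N_2$ as in 5.9, invoking the First Estimating Lemma~\ref{lemma:estimating_lemma.selected_arcs.(4.3)} on $\bar\Delta$ (with $C=\{\Theta\}$, $D=\Delta(2)$) and the Second Estimating Lemma~\ref{lemma:estimating_lemma.exceptional_arcs.(4.4)} on $\Delta$, and summing the resulting inequalities using~(C\ref{item:presentation_construction.main_properties.8}), should yield the contradiction $\norm{\Delta(1)}<\norm{\Delta(1)}$.

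The hard part is preserving, in the absence of orientability, the bound $\abs{e}\le\abs{w}-1$ on the length of an arc $e$ of $\bar\Delta$ incident only to $\Theta$. In 5.9 this was immediate from orientability: the two traversals of $e$ by $\partial\Theta$ went in opposite directions, so the label of $e$ was a common subword of both $w^m$ and $w^{-m}$, forcing $\abs{e}<\abs{w}$ because in a free group no nontrivial element is conjugate to its inverse. In case~(b) the two traversals may go in the same direction, so the label of $e$ is only known to occur twice as a subword of $w^{-m}$. Property~(\ref{item:z_1_z_2.properties.4}) with $\varepsilon$ small then provides the dichotomy: either the two occurrences lie in distinct cyclic classes modulo $\abs{w}$, giving $\abs{e}\le\varepsilon\abs{w}\le\abs{w}-1$ by the strong non-periodicity of $w$, or they are congruent modulo $\abs{w}$, in which case $e$ together with the intermediate subpath of $\partial\Theta$ closes into a loop in the $1$-skeleton of $\bar\Delta$ whose label is a cyclic conjugate of a nonzero power of $w$. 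This second subcase should be eliminated by combining the freeness of the subgroup $\langle[z_1],[z_2]\rangle$ (Proposition~\ref{proposition:free_subgroups.infinite_commutator_width.(5.9)}) with diamond-move reducedness of $\Delta$: such a loop would allow one to split off a proper subdiagram of $\Delta$ bounded by this loop with contour label a power of $w$, contradicting $[w^k]\ne1$ in $G$ for $k\ne0$. Once this uniform bound is established, the computation of 5.9 goes through to give the desired contradiction.
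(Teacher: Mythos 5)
Your overall strategy is right---use Lemma~\ref{lemma:diagrams.(3.6)}(\ref{item:lemma.diagrams.8}), extend the $S$-map structure to the closure $\bar\Delta$, invoke the two Estimating Lemmas, and force $\norm{\Delta(1)}<\norm{\Delta(1)}$---and you correctly identified that the whole game is in bounding the length of an arc incident only to the outer face $\Theta$. But your choice of test word introduces a problem the paper deliberately avoids, and your attempted repair has a real gap.

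The paper does \emph{not} raise a fixed $w$ to a high power. Instead, for each $n$ it invokes property~(\ref{item:z_1_z_2.properties.4}) of Section~\ref{section:presentation_construction} directly to pick a single cyclically reduced, non-periodic, \emph{positive} concatenation $w$ of copies of $z_1,z_2$ whose length $\abs{w}$ is large and whose cyclic shifts agree on at most a short prefix (quantified so that $(3-3c_n)\max\{p_n,\abs{s}\}+(1-c_n)q_n<\tfrac12\abs{w}$ whenever $s$ is a common prefix of two distinct cyclic shifts). It then shows $\sql_G([w])>n$ directly. Positivity kills the opposite-orientation case outright (a common subword of $w$ and $w^{-1}$ is empty), and the strong non-periodicity kills the same-orientation case, because the two traversals of the arc sit at two distinct positions in the single cyclic word $w$. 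No periodicity ever enters; no orientable case needs to be split off via Proposition~\ref{proposition:free_subgroups.infinite_commutator_width.(5.9)}.

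By using $w^m$ you reintroduce exactly the degeneracy that the paper engineered $w$ to avoid: $w^m$ has period $\abs{w}$, so two traversals at positions congruent modulo $\abs{w}$ can share a prefix of length up to nearly $\abs{w^m}$. Your proposed escape---that the arc $e$ together with the intermediate stretch $q$ of $\partial\Theta$ forms a loop labelled by a nonzero power of $w$, which would ``split off a proper subdiagram,'' contradicting $[w^k]\ne1$---presupposes that this loop is null-homotopic in $\bar\Delta$ and bounds a subdiagram. That is exactly what can fail on a non-orientable surface: a simple closed curve can be essential (for instance, the core of a Möbius band sitting inside $\Delta$), and then it bounds nothing, so no subdiagram with contour label $w^k$ can be extracted and no contradiction with freeness results. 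Diamond-move reducedness does not rescue this, since diamond moves are local and preserve the closure's topological type for a diamond-move reduced diagram (Lemma~\ref{lemma:diamond_move.(3.4)}). So as written the proof has a genuine hole in precisely the subcase you flagged as ``the hard part''; the fix is not to power up $w$ but to take $w$ itself very long and strongly non-periodic, as the paper does.
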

\begin{proof}
Let $n$ be an arbitrary positive integer,
$c_n=2-n$, and
$$
J_n=\{\,i\in I\,\mid\,\chi_i>c_n\,\}.
$$
The set $J_n$ is finite
by condition
(C\ref{item:presentation_construction.main_properties.7}).
Let
$$
p_n=\max_{i\in J_n}\mu_i\abs{r_i},
\qquad
q_n=\max_{i\in J_n}\nu_i\abs{r_i}.
$$

Let $w$ be a cyclically reduced non-periodic
concatenation of
several copies of $z_1$ and $z_2$ such that
if $s$ is a common prefix of two distinct cyclic shifts of $w$,
then
$$
(3-3c_n)\max\{p_n,\abs{s}\}+(1-c_n)q_n<\frac{1}{2}\abs{w}
$$
(see properties of $z_1$, $z_2$ in
Section \ref{section:presentation_construction}.)
Note that $w$ is a positive word, and hence $w$ and $w^{-1}$
have no nontrivial common subwords.

Suppose $\sql_G([w])\le n$.
Then there exists
a convenient connected one-contour diagram $\Delta$ over
\tGP{a,b}{\mathcal R} such that $\chi(\Delta)\ge c_n-1$ and
$\ell(\cntr{}\Delta)=w$,
see Lemmas
\ref{lemma:diamond_move.(3.4)},
\ref{lemma:diagrams.(3.6)},
\ref{lemma:convenient_diagrams.(5.1)}.

Let $\bar\Delta$ be the closure of $\Delta$ and $\Theta$
be the ``outer'' face.
It follows from the choice of $w$ that if $u$ is an arc of $\bar\Delta$
which is incident to $\Theta$ and not incident to any other face,
then
$$
(3-3c_n)\abs{u}+(1-c_n)q_n<\frac{1}{2}\abs{\cntr\Theta}.
$$

Now virtually the same argument of counting all edges of $\Delta$
in two different ways as in the proof of Proposition
\ref{proposition:free_subgroups.infinite_commutator_width.(5.9)}
leads to a contradiction and completes the proof.
\end{proof}

\begin{proposition}
\label{proposition:solvable_conjugacy_problem.(5.11)}
The construction of the presentation\/ \tGP{a,b}{\mathcal R}
in Section\/ \textup{\ref{section:presentation_construction}}
can be carried out in such a way that\/
$G$ have decidable word and conjugacy problems\textup.
\end{proposition}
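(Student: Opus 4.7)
The plan is to make the construction from Section~\ref{section:presentation_construction} sufficiently effective so that both the word and conjugacy problems in $G$ reduce to the corresponding problems in a computable finite subpresentation of \tGP{a,b}{\mathcal R}, and then to solve these finite problems by bounded enumeration of van Kampen diagrams using the explicit rational linear Dehn constants furnished by the proof of Proposition~\ref{proposition:limit_hyperbolic_groups.(5.8)}.

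First I would carry out the construction so that all the conclusions of Lemma~\ref{lemma:effective_presentation.(2.2)} are satisfied, arranging in addition that $(1-2\gamma_n)\abs{r_n}\ge n$ for every $n\in\mathbb N$ (compatible with item~(\ref{item:lemma.effective_presentation.2}) of that lemma). The crucial auxiliary step is a uniform algorithm for the word problem in finite subpresentations \tGP{a,b}{r_i,\ i\in J} with $J\subset I$ finite and explicitly given. Such a subpresentation defines a hyperbolic group, and by Lemma~\ref{lemma:useful_lemma.particular.(5.7)} any convenient disc diagram over it with contour label $w$ has at most $\abs{w}/q_J$ faces, where $q_J=\min_{i\in J}(1-2\gamma_i)>0$ is an explicitly computable rational. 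Since face perimeters are bounded by $\max_{i\in J}\abs{r_i}$, the total number of edges of such a diagram is bounded explicitly, so the candidate convenient disc diagrams with contour label $w$ form a finite computably enumerable set, and checking whether any of them is a valid labeled diagram decides whether $[w]=1$ in the subpresentation. This uniform decider is then used to compute $I$ inductively: having determined $I\cap[1,n-1]$, we decide whether $n\in I$ by testing whether $[w_n]=1$ in \tGP{a,b}{r_i,\ i\in I,\ i<n} whenever $n\in A$. Feeding this decider into Lemma~\ref{lemma:effective_presentation.(2.2)}(\ref{item:lemma.effective_presentation.3}) then makes $I$ and $\mathcal R$ recursive.

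To solve the word problem in $G$ itself, I would argue that for a word $w$ of length $L$, $[w]_G=1$ if and only if $[w]=1$ in the finite subpresentation \tGP{a,b}{r_i,\ i\in I,\ i\le L}. The nontrivial direction uses Lemma~\ref{lemma:diagrams.(3.6)}(\ref{item:lemma.diagrams.2}) together with Lemma~\ref{lemma:convenient_diagrams.(5.1)} to produce a convenient disc diagram $\Delta$ over \tGP{a,b}{\mathcal R} with contour label $w$; then Lemma~\ref{lemma:useful_lemma.particular.(5.7)} gives
\[
\sum_{\Pi\in\Delta(2)}(1-2\gamma(\Pi))\abs{\cntr\Pi}\le L,
\]
and since each summand equals $(1-2\gamma_{\iota(\Pi)})\abs{r_{\iota(\Pi)}}\ge\iota(\Pi)$ by the quantitative choice above, every face index of $\Delta$ is at most $L$. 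Hence $\Delta$ is already a diagram over the indicated finite subpresentation, and the recursiveness of $I$ makes $I\cap[1,L]$ computable, so we invoke the decider from the previous step. The conjugacy problem is handled by the same reduction using parts~(\ref{item:lemma.diagrams.3}) and~(\ref{item:lemma.diagrams.4}) of Lemma~\ref{lemma:diagrams.(3.6)} and the annular case of Lemma~\ref{lemma:useful_lemma.particular.(5.7)}: two words $w_1,w_2$ of total length $L$ represent conjugate elements of $G$ if and only if they do so in \tGP{a,b}{r_i,\ i\in I,\ i\le L}, because any convenient annular diagram realising the conjugation has face indices bounded by $L$; conjugacy in this finite subpresentation is decided by enumerating convenient annular diagrams with at most $L/q_J$ faces and checking whether any pair of their contour labels equals $(w_1,w_2^{-1})$.

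The main obstacle is to install two quantitative ingredients into the construction: the recursiveness of $\{k_n\},\{\lambda_n\},\{\mu_n\},\{\nu_n\}$ (hence of $\{\gamma_n\}$), which makes every $q_J$ computable, and the arithmetic lower bound $(1-2\gamma_n)\abs{r_n}\ge n$, which makes the reduction from $G$ to a finite subpresentation effective. Both are compatible with the requirements of Lemma~\ref{lemma:effective_presentation.(2.2)}; once they are in place, everything else follows from machinery already assembled in the paper.
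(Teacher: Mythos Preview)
Your proposal is correct and follows essentially the same route as the paper: use Lemma~\ref{lemma:effective_presentation.(2.2)} to make the construction effective, solve the word problem in finite subpresentations by bounded enumeration of diagrams via Lemma~\ref{lemma:useful_lemma.particular.(5.7)}, bootstrap this into recursiveness of $I$ and $\mathcal R$, and then reduce the word and conjugacy problems for $G$ to finite subpresentations using the bound $(1-2\gamma_n)\abs{r_n}\ge n$.

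One small technical point: in the conjugacy algorithm you should first test whether $[w_1]_G=1$ (and $[w_2]_G=1$) using the word-problem decider, since Lemma~\ref{lemma:diagrams.(3.6)}(\ref{item:lemma.diagrams.4}) only guarantees a diamond-move reduced annular diagram when $[w_1]\ne1$; the paper handles this case explicitly before invoking the annular-diagram enumeration.
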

\begin{proof}
Let the construction of the presentation \tGP{a,b}{\mathcal R}
be carried out in such a way that the conditions in the statement of
Lemma \ref{lemma:effective_presentation.(2.2)} be
satisfied (use Lemma \ref{lemma:effective_presentation.(2.2)}).

It shall be shown first that there exists an algorithm that solves
the word problem for all finite subpresentations
of \tGP{a,b}{\mathcal R}.
More precisely, it needs to be shown that there exists an algorithm
which for every input $(\mathcal S,w)$ with
$\mathcal S=\{r_n\}_{n\in J}$, $J\subset I$, $J$ finite,
and $w\in\{a^{\pm1},b^{\pm1}\}^*$,
decides correctly whether $[w]=1$
in the group presented by \tGP{a,b}{\mathcal S}.
The following algorithm does it:
\begin{enumerate}
\renewcommand{\labelenumi}{\theenumi.}
\item
	input: $\mathcal S$, $w$;
\item
	construct all disc diagrams $\Delta$ over
	\tGP{a,b}{\mathcal S}
	satisfying
	$$
	\sum_{\Pi\in\Delta(2)}(1-2\gamma(\Pi))\abs{\cntr\Pi}
	\le\abs{\cntr{}\Delta}=\abs{w}
	$$
	(there are only finitely many such diagrams, and they all
	can be effectively constructed);
\item
	if among the constructed diagrams there is one
	with contour label $w$, then the output is
	``\texttt{yes}''
	(meaning $[w]=1$ in the group presented by \tGP{a,b}{\mathcal S});
	otherwise the output is ``\texttt{no}.''
\end{enumerate}
It follows from Lemmas
\ref{lemma:diamond_move.(3.4)},
\ref{lemma:diagrams.(3.6)},
\ref{lemma:convenient_diagrams.(5.1)},
\ref{lemma:useful_lemma.particular.(5.7)}
that the described algorithm works correctly.
Hence, by Lemma \ref{lemma:effective_presentation.(2.2)},
the set $I$ and the family $\mathcal R$ are recursive.

The following algorithm decides the word problem in \tGP{a,b}{\mathcal R},
\ie, given $w\in\{a^{\pm1},b^{\pm1}\}^*$,
it decides whether
$[w]=1$ in the group presented by \tGP{a,b}{\mathcal R}:
\begin{enumerate}
\renewcommand{\labelenumi}{\theenumi.}
\item
	input: $w$;
\item
	find an $n$ such that
	$(1-2\gamma_i)\abs{r_i}>\abs{w}$ for every $i>n$
	(see condition (\ref{item:lemma.effective_presentation.2})
	of the statement of Lemma \ref{lemma:effective_presentation.(2.2)});
\item
	construct all disc diagrams $\Delta$ over
	\tGP{a,b}{r_i,\ i\in I, i\le n}
	satisfying
	$$
	\sum_{\Pi\in\Delta(2)}(1-2\gamma(\Pi))\abs{\cntr\Pi}
	\le\abs{\cntr{}\Delta}=\abs{w};
	$$
\item
	if among the constructed diagrams there is one
	with contour label $w$, then the output is
	``\texttt{yes}''
	(meaning $[w]=1$ in the group presented by \tGP{a,b}{\mathcal R});
	otherwise the output is ``\texttt{no}.''
\end{enumerate}

The following algorithm decides the conjugacy problem
in \tGP{a,b}{\mathcal R},
\ie, given $w_1$ and $w_2$, whether
$[w_1]$ and $[w_2]$ are conjugate in the group
presented by \tGP{a,b}{\mathcal R}:
\begin{enumerate}
\renewcommand{\labelenumi}{\theenumi.}
\item
	input: $w_1$, $w_2$;
\item
	decide, using the previous algorithm, whether $[w_1]=1$
	and whether $[w_2]=1$
	in the group presented by \tGP{a,b}{\mathcal R};
\item
	if $[w_1]=1$ or $[w_2]=1$, then output the appropriate answer
	(whether or not $[w_1]$ and $[w_2]$ are conjugate)
	and stop here;
	otherwise continue;
\item
	find an $n$ such that
	$(1-2\gamma_i)\abs{r_i}>\abs{w_1}+\abs{w_2}$ for every $i>n$;
\item
	construct all oriented annular diagrams $\Delta$ over
	\tGP{a,b}{r_i,\ i\in I, i\le n}
	whose contours agree with the orientation and which
	satisfy
	\begin{gather*}
	\abs{\cntr_1\Delta}=\abs{w_1},
	\qquad
	\abs{\cntr_2\Delta}=\abs{w_2},\\
	\sum_{\Pi\in\Delta(2)}(1-2\gamma(\Pi))\abs{\cntr\Pi}
	\le\abs{\cntr_1\Delta}+\abs{\cntr_2\Delta};
	\end{gather*}
\item
	if among the constructed diagrams there is one
	with contour labels $w_1$ and $w_2^{-1}$, respectively,
	then the output is
	``\texttt{yes}''
	(meaning $[w_1]$ and $[w_2]$ are conjugate in the group
	presented by \tGP{a,b}{\mathcal R});
	otherwise the output is ``\texttt{no}.''
\end{enumerate}

(Lemmas
\ref{lemma:diamond_move.(3.4)},
\ref{lemma:diagrams.(3.6)},
\ref{lemma:convenient_diagrams.(5.1)},
\ref{lemma:useful_lemma.particular.(5.7)}
shall be used to verify correctness of the last two algorithms.)
\end{proof}

The main theorem follows from Propositions
\ref{proposition:simple.norms_stably_bounded.(2.1)},
\ref{proposition:torsion-free.(5.6)},
\ref{proposition:limit_hyperbolic_groups.(5.8)},
\ref{proposition:free_subgroups.infinite_commutator_width.(5.9)},
\ref{proposition:infinite_square_width.(5.10)},
\ref{proposition:solvable_conjugacy_problem.(5.11)}.



\bibliographystyle{amsplain}
\bibliography{\string~/Documents/My_Math_Papers/bib}

\end{document}